
\documentclass[10pt]{article}
\usepackage[T1]{fontenc}
\usepackage[margin=1in]{geometry}
\usepackage[utf8]{inputenc}

\usepackage{microtype}
\usepackage{graphicx}
\usepackage{subfig}
\usepackage{caption}
\usepackage{booktabs} %

\usepackage{amsmath}
\usepackage{amsmath,amsfonts,amssymb}
\usepackage[colorlinks=true,linkcolor=blue,citecolor=red,hyperfootnotes=false]{hyperref}
\usepackage[capitalise]{cleveref}

\title{On the Sample Complexity of Decentralized Linear Quadratic Regulator with Partially Nested Information Structure}

\author{%
Lintao Ye%
\thanks{Department of Electrical Engineering at the University of Notre Dame, Notre Dame, IN, USA; \texttt{lye2@nd.edu,vgupta2@nd.edu}.}
\and
Zhu Hao%
\thanks{Department of Electrical and Computer Engineering at the University of Texas at Austin, USA; \texttt{haozhu@utexas.edu}.}
\and
Vijay Gupta%
\footnotemark[1]}

\RequirePackage{mathtools}
\RequirePackage{dsfont}
\RequirePackage{delimset}

\usepackage{amsthm}
\usepackage{amssymb}
\usepackage{thmtools}

\DeclareMathOperator*{\argmin}{arg\,min}

\newtheorem{theorem}{Theorem}
\newtheorem{corollary}{Corollary}
\newtheorem{observation}{Observation}
\newtheorem{lemma}{Lemma}
\newtheorem{remark}{Remark}

\newtheorem{proposition}{Proposition}
\newtheorem{example}{Example}
\newtheorem{assumption}{Assumption}

\newcommand{\BS}{\mathbb{S}}
\newcommand{\M}{\mathcal{M}}
\newcommand{\T}{\mathcal{T}}
\newcommand{\CL}{\mathcal{L}}
\newcommand{\A}{\mathcal{A}}
\newcommand{\CN}{\mathcal{N}}

\newcommand{\CO}{\mathcal{O}}
\newcommand{\R}{\mathbb{R}}
\newcommand{\CR}{\mathcal{R}}
\newcommand{\CS}{\mathcal{S}}
\newcommand{\G}{\mathcal{G}}
\newcommand{\Z}{\mathbb{Z}}
\newcommand{\E}{\mathbb{E}}
\newcommand{\F}{\mathcal{F}}
\newcommand{\CE}{\mathcal{E}}
\newcommand{\CH}{\mathcal{H}}
\newcommand{\Prob}{\mathbb{P}}
\newcommand{\CP}{\mathcal{P}}
\newcommand{\U}{\mathcal{U}}
\newcommand{\V}{\mathcal{V}}

\newcommand{\I}{\mathcal{I}}

\newcommand{\tr}{\text{Tr}}

\usepackage{enumitem}
\usepackage[numbers]{natbib}
\bibliographystyle{abbrvnat}
\usepackage{crossreftools}
\pdfstringdefDisableCommands{%
	\let\Cref\crtCref
	\let\cref\crtcref
}

\usepackage{algorithm}
\usepackage[noend]{algpseudocode}
\algnewcommand{\LineComment}[1]{\State \(\triangleright\) #1}

\algnewcommand{\IfThenElse}[3]{%
  \State \algorithmicif\ #1\ \algorithmicthen\ #2\ \algorithmicelse\ #3}

\begin{document}

\maketitle

\begin{abstract}
We study the problem of control policy design for decentralized state-feedback linear quadratic control with a partially nested information structure, when the system model is unknown. We propose a model-based learning solution, which consists of two steps. First, we estimate the unknown system model from a single system trajectory of finite length, using least squares estimation. Next, based on the estimated system model, we design a decentralized control policy that satisfies the desired information structure. We show that the suboptimality gap between our control policy and the optimal decentralized control policy (designed using accurate knowledge of the system model) scales linearly with the estimation error of the system model. Using this result, we provide an end-to-end sample complexity result for learning decentralized controllers for a linear quadratic control problem with a partially nested information structure.
\end{abstract}

\section{Introduction}\label{sec:intro}
In large-scale control systems, the control policy is often required to be decentralized, where different controllers may only use partial state information, when designing their local control policies. For example, a given controller may only receive a subset of the global state measurements (e.g., \cite{shah2013cal}), and there may be a delay in receiving the measurements (e.g., \cite{lamperski2012dynamic}). In general, finding a globally optimal control policy under information constraints is NP-hard, even if the system model is known at the controllers \cite{witsenhausen1968counterexample,papadimitriou1986intractable,blondel2000survey}. This has led to a large literature on identifying tractable subclasses of the problem. For instance, if the information structure describing the decentralized control problem is partially nested \cite{ho1972team}, the optimal solution to the state-feedback linear quadratic control problem can be solved efficiently using dynamic programming \cite{lamperski2015optimal}. Other conditions, such as quadratic invariance \cite{rotkowitz2005characterization,rotkowitz2011nearest}, have also been identified as tractable subclasses of the problem. 

However, the classical work in this field assumes the knowledge of the system model at the controllers. In this work, we are interested in the situation when the system model is not known a priori \cite{hou2013model}. In such a case, the existing algorithms do not apply. Moreover, it is not clear whether subclasses such as problems with partially nested information patterns or where quadratic invariance is satisfied are any more tractable than the general decentralized control problem in this case. 

In this paper, we consider a decentralized infinite-horizon state-feedback Linear Quadratic Regulator (LQR) control problem with a partially nested information structure \cite{shah2013cal,lamperski2015optimal} and assume that the controllers do not have access to the system model. We use a model-based learning approach, where we first identify the system model, and then use it to design a decentralized control policy that satisfies the prescribed information constraints. 

\subsection*{Related Work}
Solving optimal control problems without prior system model knowledge has receive much attention recently. One of the most studied problems is the centralized LQR problem. For this problem, two broad classes of methods have been studied, i.e., model based learning \cite{abbasi2011regret,mania2019certainty,dean2020sample}, and model-free learning \cite{fazel2018global,zhang2020policy,malik2020derivative,gravell2020learning}. In the model-based learning approach, a system model is first estimated from observed system trajectories using some system identification method. A control policy can then be obtained based on the estimated system model. In the model-free learning approach, the objective function in the LQR problem is first viewed as a function of the control policies. Based on zeroth-order optimization methods (e.g., \cite{ghadimi2013stochastic,nesterov2017random}), the optimal solution can then be obtained using gradient descent, where the gradient of the objective function is estimated from the data samples from system trajectories. Moreover, the model-based learning approach has also been studied for the centralized linear quadratic Gaussian control problem \cite{zheng2021sample}. In general, compared to model-free learning, model-based learning tends to require less data samples in order to achieve a policy of equivalent performance \cite{tu2019gap}.  

Most of the previous works on model-based learning for centralized LQR build on recent advances in non-asymptotic analyses for system identification of linear dynamical systems with full state observations (e.g., \cite{faradonbeh2018finite,simchowitz2018learning,sarkar2019near}). Such non-asymptotic analyses (i.e., sample complexity results) relate the estimation error of the system matrices to the number of samples used for system identification. In particular, it was shown in \cite{simchowitz2018learning} that when using a single system trajectory, the least squares approach for system identification achieves the optimal sample complexity up to logarithmic factors. In this paper, we utilize a similar least squares approach for estimating the system matrices from a single system trajectory. Although the system matrices in our problem are structured, as dictated by the interconnections among the subsystems, we leverage the results in \cite{abbasi2011improved,cohen2019learning} to provide a non-asymptotic analysis of the resulting estimation error.

There are few results on solving decentralized linear quadratic control problems with information constraints, when the system model is unknown. In \cite{furieri2020learning}, the authors studied a decentralized output-feedback linear quadratic control problem, under the assumption that the quadratic invariance condition is satisfied. The authors proposed a model-free approach and provided a sample complexity analysis. They focused on a finite-horizon setting, since gradient-based optimization methods may not converge to the optimal controller for infinite-horizon decentralized linear quadratic control problems with information constraints, even when the system model is known \cite{feng2019exponential,bu2019lqr}. In \cite{li2019distributed}, the authors proposed a consensus-based model-free learning algorithm for multi-agent decentralized LQR over an infinite horizon, where each agent (i.e., controller) has access to a subset of the global state without delay. They showed that their algorithm converges to a control policy that is a stationary point of the objective function in the LQR problem. In \cite{fattahi2020efficient}, the authors studied model-based learning for LQR with subspace constraints on the closed-loop responses. However, those constraints may not lead to controllers that satisfy the information constraints that we consider in this paper (e.g., \cite{zheng2020equivalence}).

There is also a line of research on online adaptive control for centralized LQR with unknown system models, using either model-based learning \cite{abbasi2011regret,dean2018regret,cohen2019learning}, or model-free learning \cite{abbasi2019politex,cassel2021online}. The goal there is to adaptively design a control policy in an online manner when new data samples from the system trajectory become available, and bound the corresponding regret.

\subsection*{Contributions}
We propose a two-step model-based approach to solving the problem of learning decentralized LQR with a partially nested information structure. Here, we summarize our contributions and technical challenges in the paper. 
\begin{itemize}
\item In Section~\ref{sec:system ID}, we provide a \textit{sample complexity} result for estimating the system model from a single system trajectory using a least squares approach. Despite the existence of a sparsity pattern in the system model considered in our problem, we adapt the analyses in \cite{cohen2019learning,cassel2020logarithmic} for least squares estimation of general linear system models (without any sparsity pattern) to our setting, and show that such a system identification method for general system models suffices for our ensuing analyses.

\item In Section~\ref{sec:control design}, based on the estimated system model, we design a novel \textit{decentralized control policy} that satisfies the given information structure. Our control policy is inspired by \cite{lamperski2015optimal}, which developed the optimal controller for the decentralized LQR problem with a partially nested information structure and known system model. The optimal controller therein depends on some internal states, each of which evolves according to an auxiliary linear system (characterized by the actual model of the original system with a disturbance term from the original system) and correlates with other internal states. Accordingly, this complicated form of the internal states makes it challenging to extend the design in \cite{lamperski2015optimal} to the case when the system model is unknown. To tackle this, we capitalize on the observation that  the optimal controller proposed in \cite{lamperski2015optimal} can be viewed as a disturbance-feedback control policy that maps the history of past disturbances (affecting the original system) to the current control input. Thanks to this viewpoint, we put forth a control policy that uses the aforementioned estimated system model and maps the {\it estimates} of past disturbances to the current control input via some {\it estimated} internal states. Particularly, the estimates of disturbances are obtained using the estimated system model and the state information of original system, and each of the estimated internal states evolves according to a linear system characterized by the estimated system model and the estimated disturbances. More importantly, we show that the proposed control policy can be implemented in a decentralized manner that satisfies the prescribed information structure, which requires a careful investigation of the structure of our problem.

\item In Section~\ref{sec:perturb bound on cost}, we characterize the {\it performance guarantee (i.e., suboptimality)} of the control policy proposed in Section~\ref{sec:control design}. As we discussed above, our control policy requires obtaining estimates of the past disturbances and maintaining the estimated internal states. When we compare the performance of our control policy to that of the optimal decentralized control policy in \cite{lamperski2015optimal}, both the estimates of the past disturbances and the estimated internal states contribute to the suboptimality of our control policy, which creates the major technical challenge in our analyses. We overcome this challenge by carefully investigating the structure of the proposed control policy, and we show that the suboptimality gap between our control policy and the optimal decentralized control policy (designed based on accurate knowledge of the system model) provided in \cite{lamperski2015optimal} can be decomposed into two terms, both of which scale {\it linearly} with the estimation error of the system model.

\item In Section~\ref{sec:sample complexity}, we combine the above results together and provide an {\it end-to-end sample complexity} result for learning decentralized LQR with a partially nested information structure. Surprisingly, despite the existence of the information constraints and the fact that the optimal controller is a linear dynamic controller, our sample complexity result matches with that of learning centralized LQR without any information constraints \cite{dean2020sample}.
\end{itemize}

\section{Preliminaries and Problem Formulation}\label{sec:preliminaries and problem formulation}
\subsection{Notation and Terminology}
The sets of integers and real numbers are denoted as $\mathbb{Z}$ and $\mathbb{R}$, respectively. The set of integers (resp., real numbers) that are greater than or equal to $a\in\mathbb{R}$ is denoted as $\mathbb{Z}_{\ge a}$ (resp., $\R_{\ge a}$). For a real number $a$, let $\lceil a \rceil$ be the smallest integer that is greater than or equal to $a$. The space of $m$-dimensional real vectors is denoted by $\mathbb{R}^{m}$, and the space of $m\times n$ real matrices is denoted by $\mathbb{R}^{m\times n}$. For a matrix $P\in\R^{n\times n}$, let $P^{\top}$, $\tr(P)$, and $\{\sigma_i(P):i\in\{1,\dots,n\}\}$ be its transpose, trace, and set of singular values, respectively. Without loss of generality, let the singular values of $P$ be ordered as $\sigma_1(P)\ge\cdots\ge\sigma_n(P)$. Let $\norm{\cdot}$ denote the $\ell_2$ norm, i.e., $\norm{P}=\sigma_1(P)$ for a matrix $P\in\R^{n\times n}$, and $\norm{x}=\sqrt{x^{\top}x}$ for a vector $x\in\R^n$. Let $\norm{P}_F=\sqrt{\tr(PP^{\top})}$ denote the Frobenius norm of $P\in\R^{n\times m}$. A positive semidefinite matrix $P$ is denoted by $P\succeq0$, and $P\succeq Q$ if and only if $P-Q\succeq0$. Let $\BS_+^n$ (resp., $\BS_{++}^n$) denote the set of $n\times n$ positive semidefinite (resp., positive definite) matrices. Let $I$ denote an identity matrix whose dimension can be inferred from the context. Given any integer $n\ge1$, we define $[n]=\{1,\dots,n\}$. The cardinality of a finite set $\mathcal{A}$ is denoted by $|\mathcal{A}|$. Let $\CN(\mu,\Sigma)$ denote a Gaussian distribution with mean $\mu\in\R^m$ and covariance $\Sigma\in\BS^m_+$.

\subsection{Solution to Decentralized LQR with Sparsity and Delay Constraints}\label{sec:dist LQR known matrices}
In this section, we sketch the method developed in \cite{lamperski2015optimal,shah2013cal}, which presents the optimal solution to a decentralized LQR problem with a {\it partially nested} information structure \cite{ho1972team}, when the system model is known a priori. First, let us consider a networked system that consists of $p\in\Z_{\ge1}$ interconnected linear-time-invariant (LTI) subsystems. Letting the state, input and disturbance of the subsystem corresponding to node $i\in[p]$ be $x_i(t)\in\R^{n_i}$, $u_i(t)\in\R^{m_i}$, and $w_i(t)$, respectively, the  subsystem corresponding to node $i$ is given by
\begin{equation}
\label{eqn:system for node i}
 x_i(t+1) = \Big(\!\sum_{j\in\CN_i}\!A_{ij}x_j(t)+B_{ij}u_j(t)\Big)+w_i(t)\ \forall i\in\V,
\end{equation}
where $\CN_i\subseteq[p]$ is the set of subsystems whose states and inputs directly affect the state of subsystem $j$, $A_{ij}\in\R^{n_i\times n_i}$, $B_{ij}\in\R^{n_i\times m_i}$, and $w_i(t)\in\R^{n_i}$ is a white Gaussian noise process with $w_i(t)\sim\mathcal{N}(0,\sigma_w^2I)$ for all $t\in\Z_{\ge0}$, where $\sigma_w\in\R_{>0}$.\footnote{The analysis can be extended to the case when $w_i(t)$ is assumed to be a zero-mean white Gaussian noise process with covariance $W\in\BS_{++}^{n_i}$. In that case, our analysis will depend on $\max_{i\in\V}\sigma_1(W_i)$ and $\min_{i\in\V}\sigma_n(W_i)$.} For simplicity, we assume throughout this paper that $n_i\ge m_i$ for all $i\in\V$. We can also write Eq.~\eqref{eqn:system for node i} as
\begin{equation}
\label{eqn:dynamics for x_i(t)}
x_i(t+1) = A_ix_{\CN_i}(t) + B_iu_{\CN_i}(t) + w_i(t)\quad\forall i\in\V,
\end{equation}
where $A_i\triangleq\begin{bmatrix}A_{ij_1} & \cdots A_{ij_{|\CN_i|}}\end{bmatrix}$, $B_i\triangleq\begin{bmatrix}B_{ij_1} & \cdots B_{ij_{|\CN_i|}}\end{bmatrix}$, $x_{\CN_i}(t)\triangleq\begin{bmatrix}x_{j_1}(t) & \cdots x_{j_{|\CN_i|}}(t)\end{bmatrix}^{\top}$, and $u_{\CN_i}(t)\triangleq\begin{bmatrix}u_{j_1}(t) & \cdots u_{j_{|\CN_i|}}(t)\end{bmatrix}^{\top}$, with $\CN_i=\{j_1,\dots,j_{|\CN_i|}\}$. Further letting $n=\sum_{i\in\V}n_i$ and $m=\sum_{i\in\V}m_i$, and defining $x(t)=\begin{bmatrix}x_1(t)^{\top} & \cdots & x_p(t)^{\top}\end{bmatrix}^{\top}$, $u(t)=\begin{bmatrix}u_1(t)^{\top} & \cdots & u_p(t)^{\top}\end{bmatrix}^{\top}$ and $w(t)=\begin{bmatrix}w_1(t)^{\top} & \cdots & w_p(t)^{\top}\end{bmatrix}^{\top}$, we can compactly write Eq.~\eqref{eqn:system for node i} into the following matrix form:
\begin{equation}\label{eqn:overall system}
    x(t+1)=Ax(t) + Bu(t) + w(t),
\end{equation}
where the $(i,j)$th block of $A\in\R^{n\times n}$ (resp., $B\in\R^{n\times m}$), i.e., $A_{ij}$ (resp., $B_{ij}$) satisfies $A_{ij}=0$ (resp., $B_{ij}=0$) if $j\notin\CN_i$. We assume that $w_i(t_1)$ and $w_j(t_2)$ are independent for all $i,j\in\V$ with $i\neq j$ and for all $t_1,t_2\in\Z_{\ge0}$. In other words, $w(t)$ is a white Gaussian noise process with $w(t)\sim\CN(0,\sigma_w^2I)$ for all $t\in\Z_{\ge0}$. For simplicity, we assume that $x(0)=0$ throughout this paper.\footnote{The analysis can be extended to the case when $x(0)$ is given by a zero-mean Gaussian distribution, as one may view $x(0)$ as $w(-1)$.} 

Next, we use a directed graph $\G(\V,\A)$ with $\V=[p]$ to characterize the information flow among the subsystems in $[p]$ due to communication constraints on the subsystems. Each node in $\G(\V,\A)$ represents a subsystem in $[p]$, and we assume that $\G(\V,\A)$ does not have self loops. We associate any edge $(i,j)\in\A$ with a delay of either $0$ or $1$, further denoted as $i\xrightarrow[]{0}j$ or $i\xrightarrow[]{1}j$, respectively.\footnote{The framework described in this paper can also be used to handle $\G(\V,\A)$ with larger delays; see \cite{lamperski2015optimal} for a detailed discussion.} Then, we define the delay matrix corresponding to $\G(\V,\A)$ as $D\in\R^{p\times p}$ such that: (i) If $i\neq j$ and there is a directed path from $j$ to $i$ in $\G(\V,\A)$, then $D_{ij}$ is equal to the sum of delays along the directed path from node $j$ to node $i$ with the smallest accumulative delay; (ii) If $i\neq j$ and there is no directed path from $j$ to $i$ in $\G(\V,\A)$, then $D_{ij}=+\infty$; (iii) $D_{ii}=0$ for all $i\in\V$. Here, we consider the scenario where the information (e.g., state information) corresponding to subsystem $j\in\V$ can propagate to subsystem $i\in\V$ with a delay of $D_{ij}$ (in time), if and only if there exists a directed path from $j$ to $i$ with an accumulative delay of $D_{ij}$. Note that as argued in \cite{lamperski2015optimal}, we assume that there is no directed cycle with zero accumulative delay; otherwise, one can first collapse all the nodes in such a directed cycle into a single node, and equivalently consider the resulting directed graph in the framework described above. 

To proceed, we consider designing the control input $u(t)$ for the LTI system in Eq.~\eqref{eqn:overall system}. We focus on {\it state-feedback} control, i.e., we can view $u(t)$ as a policy that maps the states of the LTI system to a control input. Moreover, we require that $u(t)$ satisfy the information structure according to the directed graph $\G(\V,\A)$ and the delay matrix $D\in\R^{p\times p}$, described above. Specifically, considering any $i\in\V$ and any $t\in\Z_{\ge0}$, and noting that the controller corresponding to subsystem $i\in\V$ provides the control input $u_i(t)\in\R^{m_i}$, the state information that is available to the controller corresponding to $i\in\V$ is given by 
\begin{equation}\label{eqn:info set}
    \I_i(t) = \{x_j(k):j\in\V_i,0\le k\le t-D_{ij}\},
\end{equation}
where $\V_i\triangleq\{j\in\V:D_{ij}\neq+\infty\}$. In other words, the control policy $u_i(t)$ maps the states contained in $\I_i(t)$ to a control input. In the sequel, we also call $\I_i(t)$ the {\it information set} of controller $i\in\V$ at time $t\in\Z_{\ge0}$. Note that $\I_i(t)$ contains the states corresponding to the subsystems in $\V$ that have enough time to reach subsystem $i\in\V$ at time $t\in\Z_{\ge0}$, due to the sparsity and delay constraints described above. Now, based on the information set $\I_i(t)$, we further define $\CS(\I_i(t))$ to be the set that consists of all the policies that map the states in $\I_i(t)$ to a control input at node $i$. The goal is then to solve the following constrained optimization problem:
\begin{equation}
\label{eqn:dis LQR obj}
\begin{split}
\min_{u(0),u(1),\dots}&\lim_{T\to\infty}\E\Big[\frac{1}{T}\sum_{t=0}^{T-1}(x(t)^{\top}Qx(t)+u(t)^{\top}Ru(t))\Big]\\
s.t.\ &x(t+1)=Ax(t)+Bu(t)+w(t),\\
&u_i(t)\in\CS(\I_i(t))\quad \forall i\in\V,\forall t\in\Z_{\ge0},
\end{split}
\end{equation}
where $Q\in\BS_+^n$ and $R\in\BS_{++}^m$ are the cost matrices, and the expectation is taken with respect to $w(t)$ for all $t\in\Z_{\ge0}$. 
Throughout the paper, we always assume that the following assumption on the information propagation pattern among the subsystems in $\V$ holds (e.g., \cite{lamperski2015optimal,yu2022online}). 
\begin{assumption}
\label{ass:info structure}
For all $j\in\CN_i$, it holds that $D_{ij}\le1$, where $\CN_i$ is given in Eq.~\eqref{eqn:system for node i}.
\end{assumption}
Assumption~1 says that the state of subsystem $i\in\V$ is affected by the state and input of subsystem $j\in\V$, if and only if there is a communication link with a delay of at most $1$ from subsystem $j$ to $i$ in $\G(\V,\A)$. As shown in \cite{lamperski2015optimal}, Assumption~\ref{ass:info structure} ensures that the information structure associated with the system given in Eq.~\eqref{eqn:system for node i} is partially nested \cite{ho1972team}.  Assumption~\ref{ass:info structure} is frequently used in decentralized control problems (e.g., \cite{lamperski2015optimal,shah2013cal} and the references therein), and one can see that the assumption is satisfied in networked systems where information propagates at least as fast as dynamics. To illustrate our arguments above, we introduce Example~\ref{exp:running example}.
\vspace{0cm}
\begin{example}
\label{exp:running example}
Consider a directed graph $\G(\V,\A)$ given in Fig.~\ref{fig:directed graph}, where $\V=\{1,2,3\}$ and each directed edge is associated with a delay of $0$ or $1$. The corresponding LTI system is then given by 
\begin{equation}
\label{eqn:LTI in exp}
\begin{bmatrix}
x_1(t+1)\\x_2(t+1)\\x_3(t+1)
\end{bmatrix}=
\begin{bmatrix}
A_{11} & A_{12} & A_{13}\\
0 & A_{22} & A_{23}\\
0 & A_{32} & A_{33}
\end{bmatrix}
\begin{bmatrix}
x_1(t)\\x_2(t)\\x_3(t)
\end{bmatrix}
+\begin{bmatrix}
B_{11} & B_{12} & B_{13}\\
0 & B_{22} & B_{23}\\
0 & B_{32} & B_{33}
\end{bmatrix}
\begin{bmatrix}
u_1(t)\\u_2(t)\\u_3(t)
\end{bmatrix}
+\begin{bmatrix}
w_1(t)\\w_2(t)\\w_3(t)
\end{bmatrix}.
\end{equation}
\end{example}
\begin{figure}[htbp]
    \centering
    \includegraphics[width=0.3\linewidth]{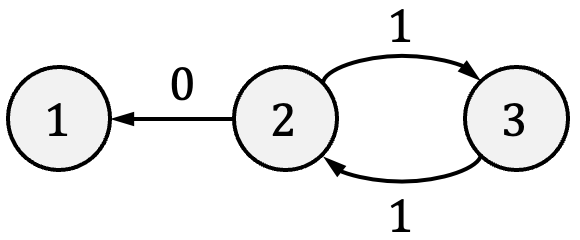} 
    \caption{{\ The directed graph of Example~\ref{exp:running example}. Node $i\in\V$ represents a subsystem with state $x_i(t)$ and edge $(i,j)\in\A$ is labelled with the information propagation delay from  $i$ to $j$.}}
\label{fig:directed graph}
\end{figure}

Now, in order to present the solution to \eqref{eqn:dis LQR obj} given in, e.g.,  \cite{lamperski2015optimal}, we need to construct an information graph $\CP(\U,\CH)$ (see \cite{lamperski2015optimal} for more details). Considering any directed graph $\G(\V,\A)$ with $\V=[p]$, and the delay matrix $D\in\R^{p\times p}$ as we described above, let us first define $s_j(k)$ to be the set of nodes in $\G(\V,\A)$ that are reachable from node $j$ within $k$ time steps, i.e., $s_j(k)=\{i\in\V:D_{ij}\le k\}$. The information graph $\CP(\U,\CH)$ is then constructed as
\begin{equation}
\label{eqn:def of info graph}
\begin{split}
\U&=\{s_j(k):k\ge0,j\in\V\},\\
\CH&=\{(s_j(k),s_j(k+1)):k\ge0, j\in\V\}.
\end{split}
\end{equation}
Thus, we see from \eqref{eqn:def of info graph} that each node $s\in\U$ corresponds to a set of nodes from $\V=[p]$ in the original directed graph $\G(\V,\A)$. Using a similar notation to that for the graph $\G(\V,\A)$, if there is an edge from $s$ to $r$ in $\CP(\U,\CH)$, we denote the edge as $s\to r$. Additionally, considering any $s_i(0)\in\U$, we write $w_i\xrightarrow[]{}s_i(0)$ to indicate the fact that the noise $w_i(t)$ is injected to node $i\in\V$ at time $t\in\Z_{\ge0}$.\footnote{Note that we have assumed that there is no directed cycle with zero accumulative delay in $\CP(\U,\CH)$. Hence, one can show that for any $s_i(0)\in\U$, $w_i$ is the only noise term such that $w_i\rightarrow s_i(0)$.} From the above construction of the information graph $\CP(\U,\CH)$, one can show that the following properties hold.
\begin{lemma}
\label{lemma:properties of info graph}
\cite[Proposition~1]{lamperski2015optimal} Given a directed graph $\G(\V,\A)$ with $\V=[p]$, the information graph $\CP(\U,\CH)$ constructed in \eqref{eqn:def of info graph} satisfies the following: (i) For every $r\in\U$, there is a unique $s\in\U$ such that $(r,s)\in\CH$, i.e., $r\xrightarrow[]{}s$; (ii) every path in $\CP(\U,\CH)$ ends at a node with a self loop; and (iii) $n\le|\U|\le p^2-p+1$.
\end{lemma}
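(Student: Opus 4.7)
The plan is to prove the three conclusions in order, all anchored on a single structural identity: the successor $s_j(k+1)$ depends only on the set $s_j(k)$, not on $j$ or $k$ individually.

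For property (i), I would first establish the identity
\begin{equation*}
s_j(k+1) = \bigl\{i \in \V : \exists\, \ell \in s_j(k) \text{ with } D_{i\ell} \le 1\bigr\}.
\end{equation*}
The ``$\supseteq$'' direction follows by concatenating a directed path of accumulated delay $\le k$ from $j$ to $\ell$ with a single edge of delay $\le 1$ from $\ell$ to $i$, giving $D_{ij} \le k+1$. For ``$\subseteq$'', I would take a minimum-delay directed path from $j$ to $i$ of total delay $\le k+1$ and split it at the last vertex $\ell$ before $i$; since edge delays lie in $\{0,1\}$, the prefix has delay $\le k$ (so $\ell \in s_j(k)$) and the last edge contributes delay $\le 1$ (so $D_{i\ell} \le 1$). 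Because the right-hand side depends only on the set $s_j(k)$ together with the fixed graph $\G$, whenever two representations $s_j(k) = s_{j'}(k')$ give the same $r \in \U$, their successors $s_j(k+1)$ and $s_{j'}(k'+1)$ must coincide, yielding uniqueness of the outgoing edge from $r$ in $\CP(\U,\CH)$.

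For property (ii), I would use monotonicity $s_j(k) \subseteq s_j(k+1)$ (since $D_{ij} \le k$ implies $D_{ij} \le k+1$) together with the containment $s_j(k) \subseteq \V$ and $|\V|=p$ to conclude that each chain $s_j(0) \subseteq s_j(1) \subseteq \cdots$ stabilizes within at most $p-1$ steps. At any stabilization point $s_j(K) = s_j(K+1)$, the unique outgoing edge given by (i) is necessarily a self-loop. Since every path in $\CP(\U,\CH)$ is generated by iterating the unique successor map, each such path eventually lands at a self-looped node.

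For property (iii), the lower bound follows immediately from the fact that the singletons $\{j\} = s_j(0)$ for $j \in \V$ are pairwise distinct, so $|\U|$ is at least the number of nodes in $\V$. For the upper bound, each chain contains at most $p$ distinct sets (sizes strictly increase from $1$ by at least one per step until stabilization, capped at $p$), and so at most $p-1$ non-singleton sets. Summing over the $p$ starting indices yields at most $p + p(p-1) = p^2$ occurrences, and by (ii) the terminal self-looped sets shared across chains produce at least $p-1$ repetitions, bringing the distinct count down to $p^2 - p + 1$. The main obstacle will be this sharp upper bound: converting the naive $p^2$ count into $p^2 - p + 1$ requires tracking how chains merge at their self-looped termini, which is more delicate than the structural claims in (i) and (ii). The latter two follow essentially directly once the memoryless successor identity has been established.
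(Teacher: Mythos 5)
The paper itself does not prove this lemma; it is imported verbatim from \cite[Proposition~1]{lamperski2015optimal}, so your proposal has to stand on its own. Your treatment of (i) and (ii) follows the standard route and is sound in outline: the memoryless successor identity is the right key, and monotonicity plus finiteness of $\V$ gives (ii). Two local repairs are needed, though. First, in the ``$\subseteq$'' direction of the successor identity you cannot always split at the last vertex before $i$: if the final edge has delay $0$ and the total delay is exactly $k+1$, the prefix has delay $k+1$, not $\le k$. You must instead cut at the last vertex whose accumulated delay along the path is $\le k$; since edge delays lie in $\{0,1\}$, the remaining suffix then has delay exactly $1$. Second, $s_j(0)$ is not the singleton $\{j\}$: by definition $s_j(0)=\{i:D_{ij}= 0\}$, which may contain several elements (in Example~1, $s_2(0)=\{1,2\}$). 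The pairwise distinctness of the $s_j(0)$, hence the lower bound, instead follows from the standing assumption that $\G(\V,\A)$ has no zero-delay directed cycle: $s_j(0)=s_{j'}(0)$ with $j\neq j'$ would force $D_{jj'}=D_{j'j}=0$. (Note also that the bound as printed reads $n\le|\U|$ with $n$ the total state dimension, under which it is false, e.g.\ $p=1$, $n_1=2$ gives $|\U|=1<n$; the intended quantity is $p=|\V|$, which is what your argument targets.)

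The genuine gap is the upper bound $|\U|\le p^2-p+1$, which you yourself flag as unresolved. The assertion that ``the terminal self-looped sets shared across chains produce at least $p-1$ repetitions'' is not true in general: if $\G(\V,\A)$ has no edges, the $p$ chains terminate at $p$ distinct self-looped singletons and there are no repetitions at all (the bound holds there only because every chain is short). The correct accounting couples the two effects. Chain $j$ is $s_j(0)\subsetneq\cdots\subsetneq s_j(K_j)$ with strictly increasing cardinalities, so it contains $K_j+1\le p-|s_j(0)|+1\le p$ distinct sets, with $K_j+1=p$ possible only if $|s_j(0)|=1$ and $s_j(K_j)=\V$. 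Hence every chain of full length $p$ terminates at the \emph{same} node $\V$: if $m\ge1$ chains have full length they jointly contribute at most $mp-(m-1)$ distinct sets, the remaining $p-m$ chains contribute at most $p-1$ each, and $mp-(m-1)+(p-m)(p-1)=p^2-p+1$; if $m=0$ the total is at most $p(p-1)<p^2-p+1$. The directed $p$-cycle with unit delays attains the bound. Without this case analysis your count stops at $p^2$.
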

\begin{remark}
\label{remark:tree info graph}
One can see from the construction of $\CP(\U,\CH)$ and Lemma~\ref{lemma:properties of info graph} that $\CP(\U,\CH)$ is a forest, i.e., a set of disconnected directed trees, where each directed tree in the forest is oriented to a node with a self loop in $\CP(\U,\CH)$. Specifically, $s_i(0)$ for all $i\in\V$ are the leaf nodes in $\CP(\U,\CH)$, and the nodes with self loop are root nodes in $\CP(\U,\CH)$.
\end{remark}
To illustrate the construction steps and the properties of the information graph discussed above, we again use Example~\ref{exp:running example}; the resulting information graph $\CP(\U,\CH)$ is then given in Fig.~\ref{fig:info graph}. Note that the information graph $\CP(\U,\CH)$ in Fig.~\ref{fig:info graph} contains two disconnected directed trees, one of which is an isolated node $\{1\}\in\U$ with a self loop. Also notice that $s_1(0)=\{1\}$, $s_2(0)=\{1,2\}$ and $s_3(0)=\{3\}$. In fact, we can check that the results in Lemma~\ref{lemma:properties of info graph} hold for $\CP(\U,\CH)$ in Fig.~\ref{fig:info graph}.
\begin{figure}[htbp]
    \centering
    \includegraphics[width=0.30\linewidth]{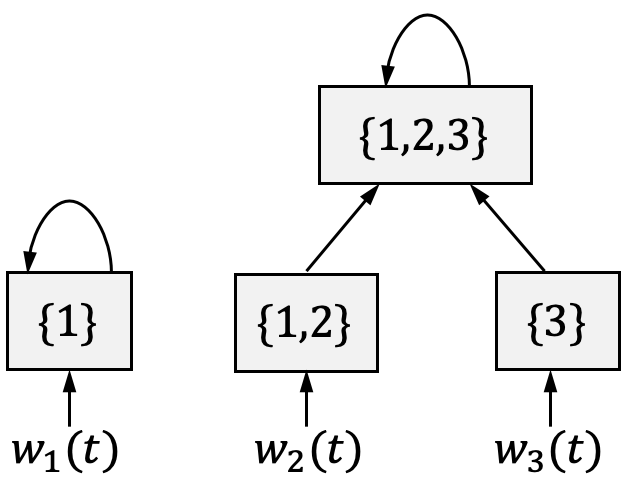} 
    \caption{The information graph of Example~\ref{exp:running example}. Each node in the information graph is a subset of the nodes in the directed graph given in Fig.~\ref{fig:directed graph}.}
\label{fig:info graph}
\end{figure}

Throughout this paper, we assume that the elements in $\V=[p]$ are ordered in an increasing manner, and that the elements in $s$ are also ordered in an increasing manner for all $s\in\U$. Now, for any $s,r\in\U$, we use $A_{sr}$ (or $A_{s,r}$) to denote the submatrix of $A$ that corresponds to the nodes of the directed graph $\G(\V,\A)$ contained in $s$ and $r$. For example, $A_{\{1\},\{1,2\}}=\begin{bmatrix}A_{11} & A_{12}\end{bmatrix}$. In the sequel, we will also use similar notations to denote submatrices of $B$, $Q$, $R$ and the identity matrix $I$. We will make the following standard assumptions (see, e.g., \cite{lamperski2015optimal}).
\begin{assumption}
\label{ass:pairs}
For any $s\in\U$ that has a self loop, the pair $(A_{ss},B_{ss})$ is stabilizable and the pair $(A_{ss},C_{ss})$ is detectable, where $Q_{ss}=C_{ss}^{\top}C_{ss}$.
\end{assumption}

Leveraging the partial nestedness of \eqref{eqn:dis LQR obj}, the authors in \cite{lamperski2015optimal} obtained the optimal solution to \eqref{eqn:dis LQR obj}, which we summarize in the following lemma.
\begin{lemma}\label{lemma:opt solution}
\cite[Corollary~4]{lamperski2015optimal} Consider the problem given in \eqref{eqn:dis LQR obj}, and let $\CP(\U,\CH)$ be the associated information graph. Suppose Assumption \ref{ass:pairs} holds. For all $r\in\U$, define matrices $P_r$ and $K_r$ recursively as
\begin{align}
K_r &= -(R_{rr}+B_{sr}^{\top} P_s B_{sr}^{\top})^{-1} B_{sr}^{\top} P_s A_{sr},\label{eqn:set of DARES K}\\
P_r &= Q_{rr}+K_r^{\top}R_{rr}K_r+(A_{sr}+B_{sr}K_r)^{\top}P_s(A_{sr}+B_{sr}K_r),\label{eqn:set of DARES P}
\end{align}
where for each $r\in\U$, $s\in\U$ is the unique node such that $r\rightarrow s$. In particular, for any $s\in\U$ that has a self loop, the matrix $P_s$ is the unique positive semidefinite solution to the Riccati equation given by Eq.~\eqref{eqn:set of DARES P} , and the matrix $A_{ss}+B_{ss}K_s$ is stable. The optimal solution to \eqref{eqn:dis LQR obj} is then given by
\begin{equation}
\label{eqn:dynamics of zeta}
\zeta_s(t+1) = \sum_{r\rightarrow s}(A_{sr}+B_{sr}K_r)\zeta_r(t) + \sum_{w_i\rightarrow s}I_{s,\{i\}}w_i(t),
\end{equation}
and
\begin{equation}
\label{eqn:exp for u^star}
u_i^{\star}(t) = \sum_{r\ni i}I_{\{i\},r}K_r\zeta_r(t),
\end{equation}
for all $t\in\Z_{\ge0}$, where $\zeta_s(t)$ is an internal state initialized with $\zeta_s(0)=\sum_{w_i\rightarrow s}I_{s,\{i\}}x_i(0)=0$ for all $s\in\U$. The corresponding optimal cost of \eqref{eqn:dis LQR obj}, denoted as $J_{\star}$, is given by
\begin{equation}
\label{eqn:opt J}
J_{\star}=\sigma_w^2\sum_{\substack{i\in\V\\ w_i\rightarrow s}}\tr\big(I_{\{i\},s}P_sI_{s,\{i\}}\big).
\end{equation}
\end{lemma}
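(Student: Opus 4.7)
The plan is to follow a dynamic-programming approach on the information graph and reduce the constrained problem to a collection of independent standard LQR subproblems, one per node of $\CP(\U,\CH)$.

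First, by Assumption~\ref{ass:info structure} the information structure is partially nested in the sense of \cite{ho1972team}, so the classical team-decision result applies: the optimal policy for \eqref{eqn:dis LQR obj} may be taken to be linear in the available state information, and in particular the search can be restricted to linear disturbance-feedback maps.

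Second, I would introduce a change of variables that decomposes the state along $\CP(\U,\CH)$. For each $s\in\U$, define $\zeta_s(t)$ as the aggregate contribution, at time $t$, of those noise samples whose information has propagated to exactly the set of subsystems $s$. By induction on $t$ using \eqref{eqn:overall system}, one verifies the reconstruction identity
\begin{equation*}
x_i(t)=\sum_{s\ni i}I_{\{i\},s}\zeta_s(t)\quad\forall i\in\V,
\end{equation*}
together with the recursion \eqref{eqn:dynamics of zeta}. The critical consequence of Assumption~\ref{ass:info structure} is that $\zeta_s(t)$ is measurable with respect to $\I_i(t)$ for every $i\in s$, so a policy of the form \eqref{eqn:exp for u^star} automatically satisfies $u_i(t)\in\CS(\I_i(t))$.

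Third, I would use this decomposition to decouple the cost. Independence of the noise components entering different $\zeta_s$'s kills the cross terms in expectation, and the quadratic cost in \eqref{eqn:dis LQR obj} separates into a sum over $s\in\U$ of an independent LQR-type term involving only $\zeta_s$ and the local gains $K_r$ for $r\to s$. Because Remark~\ref{remark:tree info graph} says $\CP(\U,\CH)$ is a forest oriented towards self-loop roots, I would solve these subproblems in a root-to-leaf sweep: at each root $s$ with a self-loop the subproblem is a standard infinite-horizon LQR whose cost-to-go matrix $P_s$ satisfies the algebraic Riccati equation \eqref{eqn:set of DARES P}, and Assumption~\ref{ass:pairs} guarantees the unique positive semidefinite solution together with stability of $A_{ss}+B_{ss}K_s$; at each non-root $r$, the parent matrix $P_s$ is already fixed, so minimizing the one-step contribution of $u_r$ yields the gain \eqref{eqn:set of DARES K} and the Lyapunov-type update \eqref{eqn:set of DARES P}. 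Assembling the gains gives \eqref{eqn:exp for u^star}, and computing the resulting steady-state expected cost using $\E[w_i(t)w_i(t)^\top]=\sigma_w^2 I$ and the independence across subsystems produces \eqref{eqn:opt J}.

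The main obstacle is the second step: rigorously establishing the decomposition $x_i(t)=\sum_{s\ni i}I_{\{i\},s}\zeta_s(t)$ and the measurability of each $\zeta_s(t)$ with respect to the information sets $\{\I_i(t):i\in s\}$. This is where Assumption~\ref{ass:info structure} must be used carefully, since it is precisely partial nestedness that allows the noise entering subsystem $i$ at time $t$ to be assigned unambiguously to the leaf $s_i(0)\in\U$ and then ``passed up'' the tree in a way that respects each controller's information. Once this decomposition is in hand, the remainder is a routine application of dynamic programming to the decoupled subproblems arranged on the forest $\CP(\U,\CH)$.
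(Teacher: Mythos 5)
The paper does not prove this lemma at all: it is imported verbatim as \cite[Corollary~4]{lamperski2015optimal}, so there is no in-paper proof to compare against. Your outline is essentially a reconstruction of the argument in that reference --- the noise-propagation decomposition of the state along the information graph, the vanishing of cross terms by independence of the $\zeta_s$'s, and a Riccati/Lyapunov sweep from the self-loop roots down the forest --- so the route is the right one, and the feasibility/measurability issue you flag (that $\zeta_r(t)$ for every $r\ni i$ is computable from $\I_i(t)$, which is exactly where Assumption~\ref{ass:info structure} and partial nestedness enter) is indeed one of the two load-bearing steps.

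The genuine gap is in the optimality direction, which your sketch treats as routine. Establishing that the policy \eqref{eqn:dynamics of zeta}--\eqref{eqn:exp for u^star} is feasible and computing its cost does not show it is optimal over \emph{all} policies satisfying $u_i(t)\in\CS(\I_i(t))$. Partial nestedness via \cite{ho1972team} gives you linearity of the optimum, but you still need the \emph{completeness} of the parametrization: every admissible linear policy must decompose as $u(t)=\sum_{s\in\U}I_{\V,s}v_s(t)$ with each $v_s(t)$ a function only of the disturbance history that has propagated to exactly the node $s$, before the per-node quadratic minimization can be claimed to search the whole feasible set. Relatedly, your phrase ``minimizing the one-step contribution of $u_r$'' at a non-root node is only correct if the one-step cost is augmented by the cost-to-go $P_s$ at the parent (the gains are coupled up the tree through the dynamics $\zeta_s(t+1)=\sum_{r\to s}(A_{sr}+B_{sr}K_r)\zeta_r(t)+\cdots$), so the recursion \eqref{eqn:set of DARES K}--\eqref{eqn:set of DARES P} must be derived from a bona fide dynamic-programming value function $\sum_r\zeta_r^{\top}P_r\zeta_r$ rather than a greedy stage-wise minimization; in the infinite-horizon average-cost setting this additionally requires the finite-horizon-to-infinite-horizon limit argument that \cite{lamperski2015optimal} carries out. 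These are the pieces that would have to be filled in for the proof to stand on its own.
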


Let us use Example~\ref{exp:running example} to illustrate the results in Lemma~\ref{lemma:opt solution}. First, considering node $\{1\}\in\U$ in the information graph $\CP(\U,\CH)$ given in Fig.~\ref{fig:info graph}, we have from Eq.~\eqref{eqn:dynamics of zeta} that 
\begin{align*}
\zeta_{1}(t+1) &= (A_{11}+B_{11}K_1)\zeta_1(t) + \sum_{w_i\to\{1\}}I_{\{1\},\{i\}}w_i(t)\\
&=(A_{11}+B_{11}K_1)\zeta_1(t) + w_1(t).
\end{align*}
Next, considering node $2\in\V$ in the directed graph $\G(\V,\A)$ given in Fig.~\ref{fig:directed graph}, we see from Eq.~\eqref{eqn:exp for u^star} and Fig.~\ref{fig:info graph} that 
\begin{equation*}
u_2^{\star}(t)=\sum_{r\ni 2}I_{\{2\},r}K_r\zeta_r(t)=I_{\{2\},\{1,2\}}K_{\{1,2\}}\zeta_{\{1,2\}}(t)+I_{\{2\},\{1,2,3\}}K_{\{1,2,3\}}\zeta_{\{1,2,3\}}(t),
\end{equation*}
where $K_r$ is given by Eq.~\eqref{eqn:set of DARES K}.

\begin{remark}
\label{remark:decentralized setting}
Obtaining the optimal policy $u^{\star}_i(t)$, for any $i\in\V$, given by Lemma~\ref{lemma:opt solution} requires global knowledge of the system matrices $A$ and $B$, the cost matrices $Q$ and $R$, and the directed graph $\G(\V,\A)$ with the associated delay matrix $D$. 
Moreover, $u^{\star}(t)$ given in Lemma~\ref{lemma:opt solution} is not a static state-feedback controller, but a linear dynamic controller based on the internal states $\zeta_r(\cdot)$ for all $r\in\U$. For any controller $i\in\V$ and for any $t\in\Z_{\ge0}$, the authors in \cite{lamperski2015optimal} proposed an algorithm to determine $\zeta_r(t)$ for all $r\in\U$ such that $i\in r$, and thus $u_i^{\star}(t)$, using only the memory maintained by the algorithm, the state information contained in the information set $\I_i(t)$ defined in Eq.~\eqref{eqn:info set}, and the global information described above.  
\end{remark}

\subsection{Problem Formulation and Summary of Results}\label{sec:problem formulation}
We now formally introduce the problem that we will study in this paper. We consider the scenario where the system matrices $A$ and $B$ are unknown. However, we assume that the directed graph $\G(\V,\A)$ and the associated delay matrix $D$ are known. Similarly to, e.g., \cite{dean2020sample,zheng2021sample}, we consider the scenario where we can first conduct experiments in order to estimate the unknown system matrices $A$ and $B$. Specifically, starting from the initial state $x(0)=0$, we evolve the system given in Eq.~\eqref{eqn:overall system} for $N\in\Z_{\ge1}$ time steps using a given control input sequence $\{u(0),u(1),\dots,u(N-1)\}$, and collect the resulting state sequence $\{x(1),x(2),\dots,x(N)\}$. Based on $\{u(0),\dots,u(N-1)\}$ and $\{x(0),\dots,x(N)\}$, we use a least squares approach to obtain estimates of the system matrices $A$ and $B$, denoted as $\hat{A}$ and $\hat{B}$, respectively. Using the obtained $\hat{A}$ and $\hat{B}$, the goal is still to solve \eqref{eqn:dis LQR obj}. Since the true system matrices $A$ and $B$ are unknown, it may no longer be possible to solve \eqref{eqn:dis LQR obj} optimally, using the methods introduced in Section~\ref{sec:dist LQR known matrices}. Thus, we aim to provide a solution to \eqref{eqn:dis LQR obj} using $\hat{A}$ and $\hat{B}$, and characterize its performance (i.e., suboptimality) guarantees. 

In the rest of this paper, we first analyze the estimation error of $\hat{A}$ and $\hat{B}$ obtained from the procedure described above. In particular, we show in Section~\ref{sec:system ID} that the estimation errors $\norm{\hat{A}-A}$ and $\norm{\hat{B}-B}$ scale as $\tilde{\CO}(1/\sqrt{N})$ with high probability.\footnote{Throughout this paper, we let $\tilde{\CO}(\cdot)$ hide logarithmic factors in $N$.} Next, in Section~\ref{sec:control design}, we design a control policy $\hat{u}(\cdot)$, based on $\hat{A}$ and $\hat{B}$, which satisfies the information constraints given in \eqref{eqn:dis LQR obj}. Supposing $\norm{\hat{A}-A}\le\varepsilon$ and $\norm{\hat{B}-B}\le\varepsilon$, where $\varepsilon\in\R_{>0}$, and denoting the cost of \eqref{eqn:dis LQR obj} corresponding to $\hat{u}(\cdot)$ as $\hat{J}$, we show in Section~\ref{sec:perturb bound on cost} that 
\begin{equation*}
\hat{J}-J_{\star}\le C\varepsilon,
\end{equation*}
as long as $\varepsilon\le C_0$, where $J_{\star}$ is the optimal cost of \eqref{eqn:dis LQR obj} given by \eqref{eqn:opt J}, and $C$ and $C_0$ are constants that explicitly depend on the problem parameters of \eqref{eqn:dis LQR obj}. Finally, combining the above results together, we show in Section~\ref{sec:sample complexity} that with high probability and for large enough $N$, the following end-to-end sample complexity of learning decentralized LQR with the partially nested information structure holds:
\begin{equation*}
\hat{J}-J_{\star}=\tilde{\CO}(\frac{1}{\sqrt{N}}).
\end{equation*}

\section{System Identification Using Least Squares}\label{sec:system ID}
As we described in Section~\ref{sec:problem formulation}, we use a least squares approach to estimate the system matrices $A\in\R^{n\times n}$ and $B\in\R^{n\times m}$, based on a single system trajectory consisting of the control input sequence $\{u(0),\dots,u(N-1)\}$ and the system state sequence $\{x(0),\dots,x(N)\}$, where $x(0)=0$ and $N\in\Z_{\ge1}$. Here, we draw the inputs $u(0),\dots,u(N-1)$ independently from a Gaussian distribution $\CN(0,\sigma^2_uI)$, where $\sigma_u\in\R_{>0}$. In other words, we let $u(t)\overset{\text{i.i.d.}}{\sim}\CN(0,\sigma_u^2I)$ for all $t\in\{0,\dots,N-1\}$. Moreover, we assume that the input $u(t)$ and the disturbance $w(t)$ are independent for all $t\in\{0,\dots,N-1\}$. Note that we consider the scenario where the estimation of $A$ and $B$ is performed in a centralized manner using a least squares approach (detailed in Algorithm~\ref{algorithm:least squares}). However, we remark that Algorithm~\ref{algorithm:least squares} can be carried out without violating the information constraints given by Eq.~\eqref{eqn:info set}, since $u(t)\overset{\text{i.i.d.}}{\sim}\CN(0,\sigma_u^2I)$ is not a function of the states in the information set defined in Eq.~\eqref{eqn:info set} for any $t\in\{0,\dots,N-1\}$. In the following, we present the least squares approach to estimate $A$ and $B$, and characterize the corresponding estimation error.

\subsection{Least Squares Estimation of System Matrices}\label{sec:estimate A and B}
Let us denote
\begin{equation}
\label{eqn:Theta_i and z_Ni}
\Theta=\begin{bmatrix}A & B\end{bmatrix}\ \text{and}\ z(t)=\begin{bmatrix}x(t)^{\top} & u(t)^{\top}\end{bmatrix}^{\top},
\end{equation}
where $\Theta\in\R^{n\times(n+m)}$ and $z(t)\in\R^{n+m}$. Given the sequences $\{z(0),\dots,z(N-1)\}$ and $\{x(1),\dots,x(N)\}$, we use the regularized least squares to obtain an estimate of $\Theta$, denoted as $\hat{\Theta}(N)$, i.e.,
\begin{equation}
\label{eqn:least squares approach}
\hat{\Theta}(N)=\argmin_{Y\in\R^{n\times (n+m)}}\Big\{\lambda\norm{Y}_F^2+\sum_{t=0}^{N-1}\norm{x(t+1)- Y z(t)}^2\Big\},
\end{equation}
where $\lambda\in\R_{>0}$ is the regularization parameter. 
We summarize the above least squares approach in Algorithm~\ref{algorithm:least squares}.
\begin{algorithm}
\textbf{Input:} parameter $\lambda>0$ and time horizon length $N$
\caption{Least Squares Estimation of $A$ and $B$}
\label{algorithm:least squares}
\begin{algorithmic}[1]
\State Initialize $x(0)=0$
\For{$t=0,\dots,N-1$}
    \State Play $u(t)\overset{\text{i.i.d.}}{\sim}\CN(0,\sigma_u^2I)$
\EndFor
\State Obtain $\hat{\Theta}(N)$ using~\eqref{eqn:least squares approach}
\State Extract $\hat{A}$ and $\hat{B}$ from $\hat{\Theta}(N)$
\end{algorithmic} 
\end{algorithm}

\subsection{Least Squares Estimation Error}\label{sec:bounds on ls error}
In order to characterize the estimation error of $\hat{\Theta}(N)$ given by \eqref{eqn:least squares approach}, we will use the following result from \cite{cohen2019learning}, which is a consequence of \cite[Theorem~1]{abbasi2011improved}.
\begin{lemma}\cite[Lemma~6]{cohen2019learning}
\label{lemma:est error of Theta_hat}
For any $t\in\Z_{\ge1}$, let $V(t)=\lambda I +\sum_{k=0}^{t-1}z(k)z(k)^{\top}$ and $\Delta(t)=\Theta-\hat{\Theta}(t)$, where $\Theta$ and $z(k)$ are given in \eqref{eqn:Theta_i and z_Ni}, $\hat{\Theta}(t)$ is given by \eqref{eqn:least squares approach}, and $\lambda\in\R_{>0}$. Then, for any $\delta_{\Theta}>0$, the following hold with probability at least $1-\delta_{\Theta}$:
\begin{equation*}
\label{eqn:est error of Theta_hat upper bound}
\tr\big(\Delta(t)^{\top}V(t)\Delta(t)\big)\le 4\sigma_w^2n\log\bigg(\frac{n}{\delta_{\Theta}}\frac{\det(V(t))}{\det(\lambda I)}\bigg)+2\lambda\norm{\Theta}_F^2,\quad\forall t\in\Z_{\ge0}.
\end{equation*}
\end{lemma}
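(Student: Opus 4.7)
The plan is to combine the first-order optimality condition of the regularized least squares problem~\eqref{eqn:least squares approach} with the self-normalized martingale tail inequality of \cite[Theorem~1]{abbasi2011improved}. Setting the matrix gradient of the objective to zero gives the normal equations $\hat{\Theta}(t)\,V(t) = \sum_{k=0}^{t-1} x(k+1)z(k)^\top$. Substituting $x(k+1)=\Theta z(k)+w(k)$ and using $\sum_{k=0}^{t-1}z(k)z(k)^\top = V(t)-\lambda I$, I would obtain the identity
$$\Delta(t)\,V(t) \;=\; \lambda\,\Theta \;-\; S(t), \qquad S(t)\triangleq \sum_{k=0}^{t-1} w(k)\,z(k)^\top.$$
Right-multiplying by $V(t)^{-1/2}$ and applying $(a+b)^2\le 2a^2+2b^2$ in the Frobenius norm yields the deterministic decomposition
$$\tr\!\bigl(\Delta(t)^\top V(t)\Delta(t)\bigr) \;\le\; 2\lambda^2 \tr\!\bigl(\Theta V(t)^{-1}\Theta^\top\bigr) \;+\; 2\,\tr\!\bigl(S(t)V(t)^{-1}S(t)^\top\bigr),$$
splitting the proof into a purely deterministic \emph{bias} term and a stochastic \emph{noise} term.

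The bias term is immediate: since $V(t)\succeq \lambda I$ implies $\lambda V(t)^{-1}\preceq I$, one has $2\lambda^2\,\tr(\Theta V(t)^{-1}\Theta^\top)\le 2\lambda\,\|\Theta\|_F^2$, which exactly reproduces the additive term in the claimed inequality. For the noise term I would write it column-wise as $\tr(S(t)V(t)^{-1}S(t)^\top) = \sum_{i=1}^{n}\|S(t)^\top e_i\|_{V(t)^{-1}}^2$, where $e_i$ is the $i$th standard basis vector of $\R^n$. For each fixed $i$, the scalar sequence $\{w_i(k)\}$ is i.i.d.\ $\mathcal{N}(0,\sigma_w^2)$ and hence $\sigma_w$-sub-Gaussian, while the regressor $z(k)=[x(k)^\top\ u(k)^\top]^\top$ is measurable with respect to the filtration $\F_k$ generated by $\{u(0),\dots,u(k),w(0),\dots,w(k-1)\}$. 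Consequently $\{z(k)w_i(k)\}$ forms a vector-valued martingale difference sequence, and \cite[Theorem~1]{abbasi2011improved} applied to each coordinate gives, with probability at least $1-\delta_\Theta/n$ and uniformly in $t\ge 1$,
$$\bigl\|S(t)^\top e_i\bigr\|_{V(t)^{-1}}^2 \;\le\; 2\sigma_w^2\,\log\!\Bigl(\tfrac{n}{\delta_\Theta}\,\tfrac{\det V(t)^{1/2}}{\det(\lambda I)^{1/2}}\Bigr).$$

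A union bound over $i\in[n]$ followed by summing the $n$ per-column inequalities and multiplying by the factor of $2$ from the earlier decomposition produces $4n\sigma_w^2\log\!\bigl(\tfrac{n}{\delta_\Theta}\tfrac{\det V(t)^{1/2}}{\det(\lambda I)^{1/2}}\bigr)$; absorbing the square root by using $\det V(t)\ge\det(\lambda I)$ (so that $\tfrac{1}{2}\log\tfrac{\det V(t)}{\det(\lambda I)}\le \log\tfrac{\det V(t)}{\det(\lambda I)}$) then delivers the claimed $4\sigma_w^2 n\log\!\bigl(\tfrac{n}{\delta_\Theta}\,\tfrac{\det V(t)}{\det(\lambda I)}\bigr)$, which combined with the bias bound completes the proof. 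The main subtlety, beyond routine algebra, is verifying the martingale structure in the presence of closed-loop dependence: although $x(k)$ depends on all past noises, the critical facts are that $w(k)$ is independent of $\F_k$ and that the exogenously drawn inputs $u(k)$ are independent of the noise sequence, so $z(k)\in\F_k$ and the predictability hypothesis of \cite[Theorem~1]{abbasi2011improved} holds verbatim. Everything else reduces to the two algebraic steps above.
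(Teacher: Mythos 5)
Your proof is correct: the normal-equation identity $\Delta(t)V(t)=\lambda\Theta-S(t)$, the bias/noise split via $(a+b)^2\le 2a^2+2b^2$, the bound $\lambda V(t)^{-1}\preceq I$ for the bias term, and the coordinate-wise application of \cite[Theorem~1]{abbasi2011improved} with a union bound over the $n$ rows (which is exactly where the $n/\delta_\Theta$ inside the logarithm comes from) reproduce the stated constants. The paper does not prove this lemma itself but imports it from \cite[Lemma~6]{cohen2019learning}; your argument is a faithful reconstruction of the standard proof behind that citation, so there is nothing to correct.
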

For any $\delta>0$, we now introduce the following probabilistic events that will be useful in our analysis later:
\begin{equation}
\label{eqn:events}
\begin{split}
\CE_w&=\bigg\{\max_{0\le t\le N-1}\norm{w(t)}\le\sigma_w\sqrt{5n\log\frac{4N}{\delta}}\bigg\},\\
\CE_u&=\bigg\{\max_{0\le t\le N-1}\norm{u(t)}\le\sigma_u\sqrt{5m\log\frac{4N}{\delta}}\bigg\},\\
\CE_{\Theta}&=\bigg\{\tr\big(\Delta(N)^{\top}V(N)\Delta(N)\big)\le 4\sigma_w^2n\log\bigg(\frac{4n}{\delta}\frac{\det(V(N))}{\det(\lambda I)}\bigg)+2\lambda \norm{\Theta}_F^2\bigg\},\\
\CE_z&=\bigg\{\sum_{t=0}^{N-1}z(t)z(t)^{\top}\succeq\frac{(N-1)\underline{\sigma}^2}{40}I\bigg\},
\end{split}
\end{equation}
where $\underline{\sigma}\triangleq\min\{\sigma_w,\sigma_u\}$. Denoting
\begin{equation}
\label{eqn:good event}
\CE=\CE_{w}\cap\CE_{v}\cap\CE_{\Theta}\cap\CE_z,
\end{equation}
we have the following result; the proof is included in Appendix~\ref{sec:proofs ls}.
\begin{lemma}
\label{lemma:prob of CE}
For any $\delta>0$ and for any $N\ge200(n+m)\log\frac{48}{\delta}$, it holds that $\Prob(\CE)\ge1-\delta$.
\end{lemma}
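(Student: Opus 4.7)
The plan is to decompose $\CE^c$ via a union bound and show that each of the four constituent bad events $\CE_w^c,\CE_u^c,\CE_\Theta^c,\CE_z^c$ has probability at most $\delta/4$ once $N$ is large enough, so that $\Prob(\CE^c)\le\delta$.

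The event $\CE_\Theta$ is handled immediately: applying Lemma~\ref{lemma:est error of Theta_hat} at $t=N$ with the confidence parameter $\delta_\Theta=\delta/4$ yields $\Prob(\CE_\Theta^c)\le\delta/4$ with no further condition on $N$. For $\CE_w$ and $\CE_u$, I would invoke the standard Laurent--Massart tail inequality for the chi-squared random variables $\|w(t)\|^2/\sigma_w^2$ and $\|u(t)\|^2/\sigma_u^2$: for any $x>0$, $\Prob(\|w(t)\|^2\ge\sigma_w^2(n+2\sqrt{nx}+2x))\le e^{-x}$. Setting $x=\log(4N/\delta)$ and using the bookkeeping inequality $n+2\sqrt{nx}+2x\le 5n\log(4N/\delta)$ (valid whenever $\log(4N/\delta)\ge 1$, which is ensured by the hypothesis on $N$), a union bound over $t\in\{0,\dots,N-1\}$ yields $\Prob(\CE_w^c)\le\delta/4$. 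The identical argument with $(m,\sigma_u)$ in place of $(n,\sigma_w)$ gives $\Prob(\CE_u^c)\le\delta/4$.

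The principal obstacle is the persistence-of-excitation bound $\CE_z$, which requires showing $\sum_{t=0}^{N-1}z(t)z(t)^{\top}\succeq(N-1)\underline{\sigma}^2 I/40$ with probability at least $1-\delta/4$. Because the inputs and disturbances are independent Gaussian increments at each step, the natural tool is a self-normalized martingale/small-ball argument of the type used in \cite{cohen2019learning,cassel2020logarithmic}. Concretely, introducing the filtration $\F_t=\sigma(u(0),w(0),\dots,u(t-1),w(t-1))$, I would observe that $x(t)$ is $\F_t$-measurable whereas $u(t)\sim\CN(0,\sigma_u^2I_m)$ is independent of $\F_t$, so a matrix Chernoff inequality applied to the i.i.d.\ Gaussian vectors $u(t)$ gives $\sum_t u(t)u(t)^{\top}\succeq N\sigma_u^2I/2$ once $N$ exceeds a constant multiple of $m\log(1/\delta)$. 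For the state block, iterating $x(t+1)=Ax(t)+Bu(t)+w(t)$ and exploiting the $\F_t$-conditional covariance $\sigma_w^2 I$ of the fresh shock $w(t)$, together with the self-normalized inequality of \cite{abbasi2011improved}, produces a comparable lower bound on $\sum_t x(t)x(t)^{\top}$. Combining the two block bounds and absorbing the mean-zero cross-terms $\sum_t x(t)u(t)^{\top}$ (which concentrate by the same martingale argument) via a block-PSD/Schur-complement comparison yields the stated lower bound with the constant $1/40$; tracking the constants through the Chernoff and small-ball steps is precisely what fixes the sample-size threshold $N\ge 200(n+m)\log(48/\delta)$.

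A final union bound over the four events completes the proof. The hard part is the $\CE_z$ step: keeping the constants explicit and uniform across both the state and input blocks while also controlling the cross-terms is significantly more delicate than the routine Gaussian concentration needed for $\CE_w,\CE_u$ or the direct appeal to Lemma~\ref{lemma:est error of Theta_hat} for $\CE_\Theta$.
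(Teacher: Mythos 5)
Your overall architecture is the same as the paper's: a union bound over the four events with $\delta/4$ allotted to each, a direct appeal to Lemma~\ref{lemma:est error of Theta_hat} with $\delta_\Theta=\delta/4$ for $\CE_\Theta$, and Gaussian norm concentration plus a union bound over $t$ for $\CE_w$ and $\CE_u$ (the paper cites its auxiliary Lemma~\ref{lemma:upper bound on gaussian w} for this; your Laurent--Massart derivation reproduces the same bound, so that part is fine).

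The gap is in your treatment of $\CE_z$, which you yourself identify as the hard step but do not actually carry out. You propose to lower-bound the two diagonal blocks $\sum_t x(t)x(t)^{\top}$ and $\sum_t u(t)u(t)^{\top}$ separately and then ``absorb'' the cross-terms $\sum_t x(t)u(t)^{\top}$ by a Schur-complement comparison. Separate lower bounds on the diagonal blocks do not by themselves give a lower bound on the full Gram matrix; the cross-term control is exactly where the difficulty lies, and you leave it as an assertion. Moreover, the specific constants in $\CE_z$ --- the factor $1/40$ and the threshold $N\ge 200(n+m)\log\frac{48}{\delta}$ --- are not something your block-by-block route would naturally produce; you claim that ``tracking the constants'' fixes the threshold, but no computation supports this. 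The paper avoids the entire issue by applying the small-ball lemma (its Lemma~\ref{lemma:lower bound on sum over z_k}, i.e.\ \cite[Lemma~36]{cassel2020logarithmic}) \emph{directly to the concatenated vector} $z(t)\in\R^{n+m}$: conditionally on $\F_{t-1}$, $z(t)$ is Gaussian with
\begin{equation*}
\E\big[z(t)z(t)^{\top}\,\big|\,\F_{t-1}\big]
=\begin{bmatrix}\E[x(t)x(t)^{\top}\,|\,\F_{t-1}] & 0\\ 0 & \sigma_u^2 I\end{bmatrix}
\succeq\underline{\sigma}^2 I,
\end{equation*}
where the off-diagonal block vanishes because $u(t)$ is independent of $\F_{t-1}$ and of $w(t-1)$, and the upper-left block is $\succeq\sigma_w^2 I$ because of the fresh noise $w(t-1)$. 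A single application of that lemma in dimension $d=n+m$ with $\delta_z=\delta/4$ then yields $\sum_{t=0}^{N-1}z(t)z(t)^{\top}\succeq\frac{(N-1)\underline{\sigma}^2}{40}I$ whenever $N\ge 200(n+m)\log\frac{48}{\delta}$, which is precisely where both constants in the statement come from. You should replace your block decomposition with this one-shot conditional-covariance argument; as written, the $\CE_z$ step of your proposal is incomplete.
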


For the analysis in the sequel, we will make the following assumption, which is also made in related literature (see e.g., \cite{lale2020logarithmic,simchowitz2020improper,zheng2021sample}).
\begin{assumption}
\label{ass:stable A}
The system matrix $A\in\R^{n\times n}$ is stable, and $\norm{A^k}\le\kappa_0\gamma_0^k$ for all $k\in\Z_{\ge0}$, where $\kappa_0\ge1$ and $\rho(A)<\gamma_0<1$.
\end{assumption}
Note that for any stable matrix $A$, we have from the Gelfand formula (e.g., \cite{horn2012matrix}) that there always exist $\kappa_0\in\R_{\ge1}$ and $\gamma_0\in\R$ with $\rho(A)<\gamma_0<1$ such that $\norm{A^k}\le\kappa_0\gamma_0^k$ for all $k\in\Z_{\ge0}$. We then have the following results; the proofs are included in Appendix~\ref{sec:proofs ls}.
\begin{lemma}
\label{lemma:upper bound on norm of z(t)}
Suppose Assumption~\ref{ass:stable A} holds. On the event $\CE$ defined in Eq.~\eqref{eqn:good event},
\begin{equation}
\norm{z(t)}\le\frac{5\kappa_0}{1-\gamma_0}\overline{\sigma}\sqrt{(\norm{B}^2m+m+n)\log\frac{4N}{\delta}},
\end{equation}
for all $t\in\{0,\dots,N-1\}$, where $N\ge1$, $\overline{\sigma}=\max\{\sigma_w,\sigma_u\}$, $\gamma_0$ and $\kappa_0$ are given in Assumption~\ref{ass:stable A}, and $z(t)=\begin{bmatrix}x(t)^{\top} & u(t)^{\top}\end{bmatrix}^{\top}$ with $x(t)\in\R^n$ and $w(t)\in\R^m$ to be the state and input of the system in Eq.~\eqref{eqn:overall system}, respectively, corresponding to Algorithm~\ref{algorithm:least squares}.
\end{lemma}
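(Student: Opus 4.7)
The plan is a direct unrolling of the dynamics together with the uniform bounds on $u(\cdot)$ and $w(\cdot)$ that are built into the event $\CE$. Since $x(0)=0$, iterating $x(t+1)=Ax(t)+Bu(t)+w(t)$ gives the representation
\begin{equation*}
x(t) = \sum_{k=0}^{t-1} A^{t-1-k}\bigl(Bu(k)+w(k)\bigr),
\end{equation*}
so I can pass to the norm, apply the triangle inequality, and use Assumption~\ref{ass:stable A} to replace each $\|A^{t-1-k}\|$ by $\kappa_0\gamma_0^{t-1-k}$. The geometric tail $\sum_{k=0}^{t-1}\gamma_0^{t-1-k}\le 1/(1-\gamma_0)$ then yields a bound on $\|x(t)\|$ in which the only stochastic quantities are $\max_k\|u(k)\|$ and $\max_k\|w(k)\|$.

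Next I would substitute the $\CE_u$ and $\CE_w$ bounds from Eq.~\eqref{eqn:events}, namely $\|u(k)\|\le\sigma_u\sqrt{5m\log(4N/\delta)}$ and $\|w(k)\|\le\sigma_w\sqrt{5n\log(4N/\delta)}$, both of which hold on $\CE$. Replacing $\sigma_u$ and $\sigma_w$ by $\overline{\sigma}=\max\{\sigma_w,\sigma_u\}$ gives
\begin{equation*}
\|x(t)\|\le\frac{\kappa_0}{1-\gamma_0}\,\overline{\sigma}\,\sqrt{5\log\frac{4N}{\delta}}\,\bigl(\|B\|\sqrt{m}+\sqrt{n}\bigr).
\end{equation*}
For $z(t)=[x(t)^{\top}\ u(t)^{\top}]^{\top}$, I would then use $\|z(t)\|\le\|x(t)\|+\|u(t)\|$ together with the same bound on $\|u(t)\|$ (upgraded to $\overline{\sigma}$), and absorb the extra $\sqrt{m}$ term into the prefactor using $\kappa_0/(1-\gamma_0)\ge 1$. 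This produces a sum of three terms of the form $\|B\|\sqrt{m}+\sqrt{n}+\sqrt{m}$ multiplied by $\frac{\kappa_0}{1-\gamma_0}\overline{\sigma}\sqrt{5\log(4N/\delta)}$.

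The final step is to fold these three terms into the form $\sqrt{\|B\|^2 m+n+m}$ demanded by the statement. I would use the elementary inequality $a+b+c\le\sqrt{3(a^2+b^2+c^2)}$ with $a=\|B\|\sqrt{m}$, $b=\sqrt{n}$, $c=\sqrt{m}$, which gives
\begin{equation*}
\|B\|\sqrt{m}+\sqrt{n}+\sqrt{m}\le\sqrt{3}\sqrt{\|B\|^2 m+n+m},
\end{equation*}
so that the constant becomes $\sqrt{5\cdot 3}=\sqrt{15}\le 5$, matching the claimed factor of $5$ in the lemma.

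There is no real obstacle here; the argument is a bookkeeping exercise in unrolling, geometric summation, and consolidating constants. The only thing to be careful about is that the worst-case constants combine correctly: dominating $\sigma_u,\sigma_w$ by $\overline{\sigma}$, promoting $1$ to $\kappa_0/(1-\gamma_0)$, and applying Cauchy--Schwarz with the right grouping so the exponent on $\|B\|$ matches and the final constant stays below $5$. Everything else follows directly from Assumption~\ref{ass:stable A} and the definitions of $\CE_w,\CE_u$ inside $\CE$.
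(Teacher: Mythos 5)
Your proposal is correct and follows essentially the same route as the paper's proof: unroll the dynamics from $x(0)=0$, bound $\|A^{t-1-k}\|$ via Assumption~\ref{ass:stable A} and sum the geometric series, substitute the $\CE_u,\CE_w$ bounds, and consolidate constants with Cauchy--Schwarz to reach $\sqrt{15}\le 5$. The only cosmetic difference is that the paper first merges $\|B\|\sqrt{m}+\sqrt{n}$ into $\sqrt{10(\|B\|^2m+n)}$ and then absorbs the $\|u(t)\|$ term, whereas you apply a single three-term Cauchy--Schwarz; both give the same final constant.
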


\begin{proposition}
\label{prop:upper bound on est error}
Suppose Assumption~\ref{ass:stable A} holds, and $\norm{A}\le\vartheta$ and $\norm{B}\le\vartheta$, where $\vartheta\in\R_{>0}$. Consider any $\delta>0$. Let the input parameters to Algorithm~\ref{algorithm:least squares} satisfy $N\ge200(n+m)\log\frac{48}{\delta}$ and $\lambda\ge\underline{\sigma}^2/40$, where $\underline{\sigma}=\min\{\sigma_w,\sigma_u\}$. Define
\begin{equation*}
z_b=\frac{5\kappa_0}{1-\gamma_0}\overline{\sigma}\sqrt{(\norm{B}^2m+m+n)\log\frac{4N}{\delta}},
\end{equation*}
where $\kappa_0$ and $\gamma_0$ are given in Assumption~\ref{ass:stable A}, and $\overline{\sigma}=\max\{\sigma_w,\sigma_u\}$. Then, with probability at least $1-\delta$, it holds that $\norm{\hat{A}-A}\le\varepsilon_0$ and $\norm{\hat{B}-B}\le\varepsilon_0$, where $\hat{A}$ and $\hat{B}$ are returned by Algorithm~\ref{algorithm:least squares}, and
\begin{equation}
\label{eqn:epsilon_0}
\varepsilon_0=4\sqrt{\frac{160}{N\underline{\sigma}^2}\bigg(2n\sigma_w^2(n+m)\log\frac{N+z^2_b/\lambda}{\delta}+\lambda n\vartheta^2\bigg)}.
\end{equation}
\end{proposition}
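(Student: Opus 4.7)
The plan is to work on the good event $\mathcal{E}$ from Eq.~\eqref{eqn:good event}, which by Lemma~\ref{lemma:prob of CE} has probability at least $1-\delta$ once $N\ge 200(n+m)\log(48/\delta)$. On $\mathcal{E}$ all three probabilistic tools are simultaneously active: the self-normalized regression bound of Lemma~\ref{lemma:est error of Theta_hat} (applied with $\delta_\Theta=\delta/4$, matching the definition of $\mathcal{E}_\Theta$), the uniform trajectory bound $\|z(t)\|\le z_b$ of Lemma~\ref{lemma:upper bound on norm of z(t)}, and the sample-covariance lower bound $\sum_{t=0}^{N-1}z(t)z(t)^\top\succeq \tfrac{(N-1)\underline{\sigma}^2}{40}I$ from $\mathcal{E}_z$. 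All that remains is to convert the self-normalized trace inequality into an operator-norm bound on the blocks of $\Delta(N)$.

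First I would pin down a clean lower bound on $V(N)=\lambda I+\sum_{t=0}^{N-1}z(t)z(t)^\top$. Combining $\mathcal{E}_z$ with the hypothesis $\lambda\ge\underline{\sigma}^2/40$ lets me absorb the regularizer into the sample sum and conclude $V(N)\succeq \tfrac{N\underline{\sigma}^2}{40}I$, so $\sigma_{n+m}(V(N))\ge N\underline{\sigma}^2/40$. Next I would upper-bound $\det(V(N))/\det(\lambda I)$ by controlling the trace: $\tr(V(N))\le(n+m)\lambda+Nz_b^2$ by Lemma~\ref{lemma:upper bound on norm of z(t)}, and AM--GM ($\det(V)\le(\tr(V)/(n+m))^{n+m}$) then gives $\det(V(N))/\det(\lambda I)\le(1+Nz_b^2/((n+m)\lambda))^{n+m}$. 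Folding the $n+m$ factors into constants produces a log term of the form $(n+m)\log((N+z_b^2/\lambda)/\delta)$ inside the Lemma~\ref{lemma:est error of Theta_hat} bound. To handle the remaining term I also bound $\|\Theta\|_F^2=\|A\|_F^2+\|B\|_F^2\le 2n\vartheta^2$ using $\|A\|,\|B\|\le\vartheta$ together with the standing assumption $n\ge m$.

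The last step is the conversion from the self-normalized bound to an operator-norm bound. Since $\tr(\Delta^\top V\Delta)\ge\sigma_{\min}(V)\|\Delta\|_F^2$, dividing the upper bound on $\tr(\Delta(N)^\top V(N)\Delta(N))$ by the lower bound $N\underline{\sigma}^2/40$ yields
\[
\|\Delta(N)\|^2\le\|\Delta(N)\|_F^2\le\frac{40}{N\underline{\sigma}^2}\Bigl(4n\sigma_w^2(n+m)\log\tfrac{N+z_b^2/\lambda}{\delta}+4\lambda n\vartheta^2\Bigr),
\]
and taking square roots reproduces the expression for $\varepsilon_0$ up to the absorbed constants. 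Since $\hat A-A$ and $\hat B-B$ are the two block-columns of $\Delta(N)=\begin{bmatrix}A-\hat A & B-\hat B\end{bmatrix}$, they each satisfy $\|\hat A-A\|,\|\hat B-B\|\le\|\Delta(N)\|\le\varepsilon_0$ simultaneously on $\mathcal{E}$, which gives the stated high-probability conclusion.

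I expect the main obstacle to be the careful bookkeeping of the log-determinant term: tightening the Lemma~\ref{lemma:est error of Theta_hat} bound into the advertised $\log((N+z_b^2/\lambda)/\delta)$ form while cleanly absorbing the $(n+m)$ factors and the AM--GM constants into the outer multiplicative constant of $\varepsilon_0$. The remaining ingredients---the event algebra on $\mathcal{E}$, the $\tr(\Delta^\top V\Delta)\ge\sigma_{\min}(V)\|\Delta\|_F^2$ inequality, and the block-submatrix extraction---are essentially routine.
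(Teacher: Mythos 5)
Your proposal is correct and follows essentially the same route as the paper's proof: work on the event $\CE$ from Lemma~\ref{lemma:prob of CE}, lower-bound $V(N)\succeq \tfrac{N\underline{\sigma}^2}{40}I$ via $\CE_z$ and $\lambda\ge\underline{\sigma}^2/40$, bound $\log(\det V(N)/\det(\lambda I))$ by $(n+m)\log(N+z_b^2/\lambda)$ using $\norm{z(t)}\le z_b$ (the paper cites the same determinant--trace argument via \cite[Lemma~37]{cassel2020logarithmic}), use $\norm{\Theta}_F^2\le 2n\vartheta^2$, and convert the self-normalized trace bound to an operator-norm bound before extracting the blocks $\hat{A}-A$ and $\hat{B}-B$. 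The only bookkeeping you flag as remaining — absorbing the $\log(4n/\delta)$ term into $2(n+m)\log\tfrac{N+z_b^2/\lambda}{\delta}$ using $N\ge 4n$ — is exactly what the paper does.
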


Several remarks pertaining to Algorithm~\ref{algorithm:least squares} and the result in Proposition~\ref{prop:upper bound on est error} are now in order. First, note that while considering the problem of learning centralized LQR without any information constraints, the authors in \cite{dean2020sample} proposed to obtain $\hat{A}$ and $\hat{B}$ from multiple system trajectories using least squares, where each trajectory starts from $x(0)=0$. They showed that $\norm{\hat{A}-A}=\CO(1/\sqrt{N_r})$ and $\norm{\hat{B}-B}=\CO(1/\sqrt{N_r})$, where $N_r\in\Z_{\ge1}$ is the number of system trajectories. In contrast, we estimate $A$ and $B$ from a single system trajectory, and achieve $\norm{\hat{A}-A}=\tilde{\CO}(1/\sqrt{N})$ and $\norm{\hat{B}-B}=\tilde{\CO}(1/\sqrt{N})$. 

Second, note that we use the regularized least squares in Algorithm~\ref{algorithm:least squares} to obtain estimates $\hat{A}$ and $\hat{B}$. Although least squares without regularization can also be used to obtain estimates $\hat{A}$ and $\hat{B}$ from a single system trajectory with the same $\tilde{\CO}(1/\sqrt{N})$ finite sample guarantee (e.g., \cite{simchowitz2018learning}), we choose to use regularized least squares considered in, e.g., \cite{abbasi2011improved,cohen2019learning,cassel2020logarithmic}. The reason is that introducing the regularization into least squares makes the finite sample analysis more tractable (e.g., \cite{cohen2019learning,cassel2020logarithmic}), which facilitates the adaption of the analysis in \cite{cohen2019learning,cassel2020logarithmic} to our setting described in this section. Moreover, note that the lower bound on $\lambda$ required in Proposition~\ref{prop:upper bound on est error} is merely used to guarantee that the denominator of the right-hand side of Eq.~\eqref{eqn:epsilon_0} contains the factor $1/\sqrt{N}$; choosing an arbitrary $\lambda\in\R_{>0}$ leads to a factor $1/\sqrt{N-1}$. In general, one can show that choosing any $\lambda\in\R_{>0}$ leads to the same $\tilde{\CO}(1/\sqrt{N})$ finite sample guarantee.

Third, note that we do not leverage the block structure (i.e., sparsity pattern) of $A$ and $B$ described in Section~\ref{sec:dist LQR known matrices}, when we obtain $\hat{A}$ and $\hat{B}$ using Algorithm~\ref{algorithm:least squares}. Therefore, the sparsity pattern of $\hat{A}$ and $\hat{B}$ may potentially be inconsistent with that of $A$ and $B$. Nonetheless, such a potential inconsistency does not play any role in our analysis later. The reason is that the control policy to be proposed in Section~\ref{sec:control design} does not depend on the sparsity pattern of $\hat{A}$ and $\hat{B}$. Moreover, when analyzing the suboptimality of the proposed control policy later in Section~\ref{sec:sub guarantees}, we only leverage the fact that the estimation error corresponding to submatrices in $\hat{A}$ (resp., $\hat{B}$) will be upper bounded by $\norm{\hat{A}-A}$ (resp., $\norm{\hat{B}-B}$). Specifically, considering any nodes $s,r$ in the information graph $\CP(\U,\CH)$ given by \eqref{eqn:def of info graph}, one can show that $\norm{\hat{A}_{sr}-A_{sr}}\le\norm{\hat{A}-A}$, where recall that $\hat{A}_{sr}$ (resp., $A_{sr}$) is a submatrix of $\hat{A}$ (resp., $A$) that corresponds to the nodes of the directed graph $\G(\V,\A)$ contained in $s$ and $r$.

Finally, we remark that one may also use system identification schemes and the associated sample complexity analysis dedicated to sparse system matrices (e.g., \cite{fattahi2019learning}). Under some extra assumptions on $A$ and $B$ (e.g., \cite{fattahi2019learning}), one may then obtain $\hat{A}$ and $\hat{B}$ that have the same sparsity pattern as $A$ and $B$, and remove the logarithmic factor in $N$ in $\varepsilon_0$ defined in Proposition~\ref{prop:upper bound on est error}. However, the assumptions on $A$ and $B$ made in e.g., \cite{fattahi2019learning} can be restrictive and hard to check in practice.

\section{Control Policy Design}\label{sec:control design}
While the estimation of $A$ and $B$ is performed in a centralized manner as we discussed in Section~\ref{sec:estimate A and B}, we assume that each controller $i\in\V$ receives the estimates $\hat{A}$ and $\hat{B}$ after we conduct the system identification step described in Algorithm~\ref{algorithm:least squares}. Given the matrices $\hat{A}$, $\hat{B}$, $Q$, and $R$, and the directed graph $\G(\V,\A)$ ($\V=[p]$) with the delay matrix $D$, in this section we design a control policy that can be implemented in a decentralized manner, while satisfying the information constraints described in Section~\ref{sec:dist LQR known matrices}. To this end, we leverage the structure of the optimal policy $u^{\star}(\cdot)$ given in Lemma~\ref{lemma:opt solution} (when $A$ and $B$ are known). Note that the optimal policy $u^{\star}(\cdot)$ cannot be applied to our scenario, since only $\hat{A}$ and $\hat{B}$ are available.

First, given the directed graph $\G(\V,\A)$ with $\V=[p]$ and the delay matrix $D$, we construct the information graph $\CP(\U,\CH)$ given by \eqref{eqn:def of info graph}. Recall from Remark~\ref{remark:tree info graph} that $\CP(\U,\CH)$ is a forest that contains a set of disconnected directed trees. We then let $\CL$ denote the set of all the leaf nodes in $\CP(\U,\CH)$, i.e.,
\begin{equation}
\label{eqn:set of leaf nodes}
\CL=\{s_i(0)\in\U:i\in\V\}.
\end{equation}
Moreover, for any $s\in\U$, we denote
\begin{equation}
\label{eqn:set of leaf nodes of s}
\CL_s=\{v\in\CL:v\rightsquigarrow s\},
\end{equation}
where we write $v\rightsquigarrow s$ if and only if there is a unique directed path from node $v$ to node $s$ in $\CP(\U,\CH)$. In other words, $\CL_s$ is the set of leaf nodes in $\CP(\U,\CH)$ that can reach $s$. Moreover, for any $v,s\in\U$ such that $v\rightsquigarrow s$, we let $l_{vs}$ denote the length of the unique directed path from $v$ to $s$ in $\CP(\U,\CH)$; we let $l_{vs}=0$ if $v=s$. For example, in the information graph (associated with Example~\ref{exp:running example}) given in Fig.~\ref{fig:info graph}, we have $\CL=\{\{1\},\{1,2\},\{3\}\}$, $\CL_{\{1,2,3\}}=\{\{1\},\{1,2\}\}$, and $l_{\{1\}\{1,2,3\}}=1$.

Next, in order to leverage the structure of the optimal policy $u^{\star}(\cdot)$ given in Eqs.~\eqref{eqn:set of DARES K}-\eqref{eqn:exp for u^star}, we substitute (submatrices of) $\hat{A}$ and $\hat{B}$ into the right-hand sides of Eqs.~\eqref{eqn:set of DARES K}-\eqref{eqn:set of DARES P}, and obtain $\hat{K}_r$ and $\hat{P}_r$ for all $r\in\U$. Specifically, for all $r\in\U$, we obtain $\hat{K}_r$, and $\hat{P}_r$ recursively as
\begin{align}
\hat{K}_r &= -(R_{rr}+\hat{B}_{sr}^{\top} \hat{P}_s \hat{B}_{sr}^{\top})^{-1} \hat{B}_{sr}^{\top} \hat{P}_s \hat{A}_{sr},\label{eqn:set of DARES K hat}\\
\hat{P}_r &= Q_{rr}+\hat{K}_r^{\top}R_{rr}\hat{K}_r+(\hat{A}_{sr}+\hat{B}_{sr}\hat{K}_r)^{\top}\hat{P}_s(\hat{A}_{sr}+\hat{B}_{sr}\hat{K}_r),\label{eqn:set of DARES P hat}
\end{align}
where for each $r\in\U$, we let $s\in\U$ be the unique node such that $r\rightarrow s$, and $\hat{A}_{sr}$ (resp.,  $\hat{B}_{sr}$) is a submatrix of $\hat{A}$ (resp., $\hat{B}$) obtained in the same manner as $A_{sr}$ (resp., $B_{sr}$) described before. Similarly to Eq.~\eqref{eqn:dynamics of zeta}, we then use $\hat{K}_r$ for all $r\in\U$ together with $\hat{A}$ and $\hat{B}$ to maintain an (estimated) internal state $\hat{\zeta}_r(t)$ (to be defined later) for all $r\in\U$ and for all $t\in\{0,\dots,T-1\}$, which, via a similar form to Eq.~\eqref{eqn:exp for u^star}, will lead to our control policy, denoted as $\hat{u}_i(t)$, for all $i\in\V$ and for all $t\in\{0,\dots,T-1\}$. Specifically, for all $i\in\V$ in parallel, we propose Algorithm~\ref{algorithm:control design} to compute the control policy 
\begin{equation}
\label{eqn:control policy}
\hat{u}_i(t)=\sum_{r\ni i}I_{\{i\},r}\hat{K}_r\hat{\zeta}_r(t)\quad\forall t\in\{0,\dots,T-1\}.
\end{equation}

\begin{algorithm}
\textbf{Input:} estimates $\hat{A}$ and $\hat{B}$, cost matrices $Q$ and $R$, directed graph $\G(\V,\A)$ with $\V=[p]$ and delay matrix $D$, time horizon length $T$
\caption{Control policy design for node $i\in\V$}\label{algorithm:control design}
\begin{algorithmic}[1]
\State Construct the information graph $\CP(\U,\CH)$ from \eqref{eqn:def of info graph}
\State Obtain $\hat{K}_s$ for all $s\in\U$ from Eq.~\eqref{eqn:set of DARES K}
\State Initialize $\M_i\gets\bar{\M}_i$
\For{$t=0,\dots T-1$}
    \For{$s\in\CL(\T_i)$}
        \State Find $s_j(0)\in\U$ s.t. $j\in\V$ and $s_j(0)=s$
        \State Obtain $\hat{w}_j(t-D_{ij}-1)$ from Eq.~\eqref{eqn:est w_i(t)}
        \State Obtain $\hat{\zeta}_s(t-D_{ij})$ from Eq.~\eqref{eqn:dynamics of zeta hat}
        \State $\M_i\gets \M_i\cup\{\hat{\zeta}_s(t-D_{ij})\}$
    \EndFor
    \For{$s\in\CR(\T_i)$}
        \State Obtain $\hat{\zeta}_s(t-D_{\max})$ from Eq.~\eqref{eqn:dynamics of zeta hat}
        \State $\M_i\gets \M_i\cup\{\hat{\zeta}_s(t-D_{\max})\}$
    \EndFor
    \State Play $\hat{u}_i(t)=\sum_{r\ni i}I_{\{i\},r}\hat{K}_r\hat{\zeta}_r(t)$
    \State $\M_i\gets\M_i\setminus\big(\{\hat{\zeta}_s(t-2D_{\max}-1):s\in\CL(\T_i)\}\cup\{\hat{\zeta}_s(t-D_{\max}-1):s\in\CR(\T_i)\}\big)$
\EndFor
\end{algorithmic} 
\end{algorithm}

We now describe the notations used in Algorithm~\ref{algorithm:control design} and hereafter. Let us consider any $i\in\V$. In Algorithm~\ref{algorithm:control design}, we let $\T_i$ denote the set of disconnected directed trees in $\CP(\U,\CH)$ such that the root node of any tree in $\T_i$ contains $i$. Slightly abusing the notation, 
we also let $\T_i$ denote the set of nodes of all the trees in $\T_i$. Moreover, we denote 
\begin{equation}
\label{eqn:leaf nodes in T_i}
\CL(\T_i)=\T_i\cap\CL,
\end{equation}
where $\CL$ is defined in Eq.~\eqref{eqn:set of leaf nodes}, i.e., $\CL(\T_i)$ is the set of leaf nodes of all the trees in $\T_i$. Letting $\CR\subseteq\U$ be the set of root nodes in $\CP(\U,\CH)$, we denote 
\begin{equation}
\label{eqn:root nodes in T_i}
\CR(\T_i) = \T_i\cap\CR,
\end{equation}
where we recall from Lemma~\ref{lemma:properties of info graph} that any root node in $\CP(\U,\CH)$ has a self loop.  We then see from the information graph $\CP(\U,\CH)$ given in Fig.~\ref{fig:info graph} that 
\begin{align*}
&\CL(\T_1)=\{\{1\},\{1,2\},\{3\}\},\ \CL(\T_2)=\CL(\T_3)=\{\{1,2\},\{3\}\},\\
&\CR(\T_1)=\{\{1\},\{1,2,3\}\},\ \CR_2(\T_2)=\CR(\T_3)=\{1,2,3\}.
\end{align*}
Note that if any node $s\in\T_i$ is a leaf node with a self loop (i.e., $s$ is an isolated node in $\CP(\U,\CH$)) such as the node $\{3\}$ in Fig.~\ref{fig:info graph}, we only include $s$ in $\CL(\T_i)$ (i.e., $s\in\CL(\T_i)$ but $s\not\in\CR(\T_i)$). 

Furthermore, we denote
\begin{equation}
\label{eqn:depth of T_i}
D_{\max}=\max_{\substack{i,j\in\V\\ j\rightsquigarrow i}} D_{ij},
\end{equation}
where we write $j\rightsquigarrow i$ if and only if there is a directed path from node $j$ to node $i$ in $\G(\V,\A)$, and recall that $D_{ij}$ is the sum of delays along the directed path from $j$ to $i$ with the smallest accumulative delay. Finally, the memory $\M_i$ of Algorithm~\ref{algorithm:control design} is initialized as $\M_i=\bar{\M}_i$ with
\begin{align}
\bar{\M}_i=&\big\{\hat{\zeta}_s(k):k\in\{-2D_{\max}-1,\dots,-D_{ij}-1\}, s\in\CL(\T_i),j\in\V,s_j(0)=s\big\}\cup\{\hat{\zeta}_s(-D_{\max}-1):s\in\CR(\T_i)\},\label{eqn:initial M_i}
\end{align}
where we initialize $\hat{\zeta}_s(k)=0$ for all $\hat{\zeta}_s(k)\in\bar{\M}_i$.
\begin{remark}
\label{remark:order of elements in CL}
For any $s,r\in\CL(\T_i)$, let $j_1,j_2\in\V$ be such that $s_{j_1}(0)=s$ and $s_{j_2}(0)=r$. In Algorithm~\ref{algorithm:control design}, we assume that the elements in $\CL(\T_i)$ are already ordered such that if $D_{ij_1}>D_{ij_2}$, then $s$ comes before $r$ in $\CL(\T_i)$. We then let the for loop in lines~5-9 in Algorithm~\ref{algorithm:control design} iterate over the elements in $\CL(\T_i)$ according to the above order. Considering the node $2$ in the directed graph $\G(\V,\A)$ in Example~\ref{exp:running example}, we see from Fig.~\ref{fig:directed graph} and Fig.~\ref{fig:info graph} that $\CL(\T_2)=\{\{1,2\},\{3\}\}$, where $s_2(0)=\{1,2\}$ and $s_3(0)=\{3\}$. Since $D_{22}=0$ and $D_{23}=1$, we assume that the elements in $\CL(\T_2)$ are ordered such that $\CL(\T_2)=\{\{3\},\{1,2\}\}$. 
\end{remark}

\begin{remark}
\label{remark:def of D_max}
Recall that each edge in $\G(\V,\A)$ is associated with a delay of either $0$ or $1$. Considering the scenario with only sparsity constraints (e.g., \cite{shah2013cal}), i.e., all the edges in $\G(\V,\A)$ have a zero delay, we see that $D_{ij}=0$ for all $i,j\in\V$ such that $j\rightsquigarrow i$, which implies via Eq.~\eqref{eqn:depth of T_i} that $D_{\max}=0$. 
\end{remark}

For any $r\ni i$, the dynamics of the internal state $\hat{\zeta}_r(t)$ is given by
\begin{equation}
\label{eqn:dynamics of zeta hat}
\hat{\zeta}_r(t+1)= \sum_{v\rightarrow r}(\hat{A}_{rv}+\hat{B}_{rv}\hat{K}_v)\hat{\zeta}_v(t) + \sum_{w_j\rightarrow r}I_{r,\{j\}}\hat{w}_j(t),
\end{equation}
where $\hat{w}_j(t)$ is an estimate of the disturbance $w_j(t)$ in Eq.~\eqref{eqn:dynamics for x_i(t)} obtained as
\begin{equation}
\label{eqn:est w_i(t)}
\hat{w}_j(t)=
\begin{cases}
0\ \text{if}\ t<-1,\\
x_j(0)\ \text{if}\ t=-1,\\
x_j(t+1) - \hat{A}_jx_{\CN_j}(t) - \hat{B}_j\hat{u}_{\CN_j}(t)\ \text{if}\ t\ge0,
\end{cases}
\end{equation}
where we replace $A_j$ and $B_j$ with the estimates $\hat{A}_j$ and $\hat{B}_j$ in Eq.~\eqref{eqn:dynamics for x_i(t)}, respectively, and $\hat{u}_{\CN_j}(t)$ is the vector that collects $\hat{u}_{j_1}(t)$ for all $j_1\in\CN_j$, with $\CN_j$ given in Assumption~\ref{ass:info structure}. We note from Eqs.~\eqref{eqn:dynamics of zeta hat}-\eqref{eqn:est w_i(t)} that $\hat{\zeta}_r(0)=\sum_{w_j\rightarrow r}I_{r,\{j\}}x_j(0)$, where $x(0)=0$ as we assumed previously. We emphasize that Eqs.~\eqref{eqn:dynamics of zeta hat}-\eqref{eqn:est w_i(t)} are the keys to our control policy design, and they also enable our analyses in Section~\ref{sec:sub guarantees}, where we provide a suboptimality guarantee of our control policy. As we mentioned in Section~\ref{sec:intro}, the motivation of the control policy $\hat{u}(\cdot)$ given by Eqs.~\eqref{eqn:control policy}, \eqref{eqn:dynamics of zeta hat}-\eqref{eqn:est w_i(t)} is that the optimal control policy given in Lemma~\ref{lemma:opt solution} can be viewed as a disturbance-feedback controller. Since the system matrices $A$ and $B$ are unknown, the control policy $\hat{u}(\cdot)$ constructed in Eqs.~\eqref{eqn:control policy}, \eqref{eqn:dynamics of zeta hat}-\eqref{eqn:est w_i(t)} maps the estimates of the past disturbances given by Eq.~\eqref{eqn:est w_i(t)} to the current control input via the estimated internal states given by Eq.~\eqref{eqn:dynamics of zeta hat}.

\begin{observation}
\label{ob:simplify zeta_hat}
From the structure of the information graph $\CP(\U,\CH)$ defined in~\eqref{eqn:def of info graph}, the following hold:\\
(a) If $r$ is not a leaf node in $\CP(\U,\CH)$, Eq.~\eqref{eqn:dynamics of zeta hat} reduces to $\hat{\zeta}_r(t+1)= \sum_{v\rightarrow r}(\hat{A}_{rv}+\hat{B}_{rv}\hat{K}_v)\hat{\zeta}_v(t)$.\\
(b) If $r$ is a leaf node in $\CP(\U,\CH)$ that is not isolated, Eq.~\eqref{eqn:dynamics of zeta hat} reduces to $\hat{\zeta}_r(t+1)=\sum_{w_j\rightarrow r}I_{r,\{j\}}\hat{w}_j(t)$.\\
(c) If $r$ is an isolated node in $\CP(\U,\CH)$, Eq.~\eqref{eqn:dynamics of zeta hat} reduces to $\hat{\zeta}_r(t+1)= (\hat{A}_{rr}+\hat{B}_{rr}\hat{K}_r)\hat{\zeta}_r(t)+\sum_{w_j\rightarrow r}I_{r,\{j\}}\hat{w}_j(t)$.
\end{observation}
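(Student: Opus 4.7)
The observation is a direct structural consequence of the construction of the information graph $\CP(\U,\CH)$ in \eqref{eqn:def of info graph} together with Lemma~\ref{lemma:properties of info graph}. My plan is to isolate the two sums appearing on the right-hand side of \eqref{eqn:dynamics of zeta hat}, namely the ``predecessor sum'' $\sum_{v\to r}(\hat{A}_{rv}+\hat{B}_{rv}\hat{K}_v)\hat{\zeta}_v(t)$ and the ``noise injection sum'' $\sum_{w_j\to r}I_{r,\{j\}}\hat{w}_j(t)$, and to determine, for each of the three cases, exactly which of these two sums is empty. The three cases are then just the three non-vacuous combinations.

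For the noise injection sum, I will use the fact recorded in the footnote after \eqref{eqn:def of info graph}: for any $s_i(0)\in\U$, the unique noise term with $w_i\rightarrow s_i(0)$ is $w_i$, and more generally no noise feeds into any node that is not of the form $s_j(0)$. Equivalently, the noise injection sum is non-empty if and only if $r\in\CL$, i.e., $r$ is a leaf node in the sense of \eqref{eqn:set of leaf nodes}. This immediately gives the vanishing of the noise sum in case (a), and its presence (with a single term) in cases (b) and (c).

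For the predecessor sum, I will invoke Lemma~\ref{lemma:properties of info graph}(i) and the edge set $\CH=\{(s_j(k),s_j(k+1)): k\ge 0, j\in\V\}$: any $v\in\U$ with $v\to r$ must satisfy $v=s_j(k)$ and $r=s_j(k+1)$ for some $k\ge 0$ and $j\in\V$. Thus if $r$ is a non-leaf ($r=s_j(k+1)$ for some $k\ge 0$), the sum contains at least the term corresponding to $v=s_j(k)$, yielding case (a). If $r=s_j(0)$ is a leaf, the only way for some $v$ to satisfy $v\to r$ is to have $v=s_i(k')$ with $s_i(k'+1)=s_j(0)$, and by the no-zero-delay-cycle assumption together with the observation that $i\in s_i(k'+1)$ this forces $v=r$, i.e., $r$ has a self loop. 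Consequently, a leaf has an incoming edge if and only if it is isolated, and in that case the only incoming edge is the self loop $r\to r$. Combined with the noise analysis above, case (b) (leaf without self loop) kills the predecessor sum, and case (c) (isolated leaf) retains exactly the single self-loop term $(\hat{A}_{rr}+\hat{B}_{rr}\hat{K}_r)\hat{\zeta}_r(t)$.

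The only mildly subtle point — and the one I would treat most carefully in the actual proof — is the argument ruling out ``spurious'' predecessors of a leaf node in case (b), i.e., showing that $s_i(k'+1)=s_j(0)$ with $(i,k')\ne(j,0)$ cannot occur under the paper's standing assumption of no directed cycle with zero accumulative delay. Everything else is direct bookkeeping against the definitions of $\CL$, $\CR$, and $\CP(\U,\CH)$.
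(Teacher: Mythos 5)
Your decomposition of Eq.~\eqref{eqn:dynamics of zeta hat} into the predecessor sum and the noise-injection sum is the natural reading, and the paper offers no proof to compare against (the observation is simply asserted from the structure of $\CP(\U,\CH)$). The parts of your argument that rest only on the definitions do go through: the noise sum is non-empty exactly when $r\in\CL$, so a non-leaf loses it; and an isolated node must equal some $s_j(0)$ (take a representation $r=s_j(k)$ with $k$ minimal --- if $k\ge1$ then $s_j(k-1)\to r$ with $s_j(k-1)\ne r$ would contradict isolation), so case (c) correctly retains one noise term and the self-loop term.

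The genuine gap is exactly the step you flag as ``mildly subtle,'' and it is worse than subtle: the implication ``$s_i(k'+1)=s_j(0)$ forces $s_i(k')=s_j(0)$'' does not follow from the absence of zero-delay cycles, and it can fail. Take $\V=\{1,2\}$ with edges $1\xrightarrow[]{0}2$ and $2\xrightarrow[]{1}1$; the only cycle has accumulated delay $1$, so the standing assumption holds. Then $D_{21}=0$ and $D_{12}=1$, hence $s_1(0)=\{1,2\}$, $s_2(0)=\{2\}$, $s_2(1)=\{1,2\}$, and $\CH$ contains the proper edge $\{2\}\to\{1,2\}$ into the ``leaf'' $s_1(0)$, which simultaneously carries a self loop and receives the noise $w_1$. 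Here $i=2\in s_2(1)$ and there is no zero-delay cycle, yet $s_2(0)\neq s_2(1)$, so your proposed justification is a non sequitur; and for $r=\{1,2\}$ neither sum in Eq.~\eqref{eqn:dynamics of zeta hat} is empty, so the conclusion you want in case (b) (empty predecessor sum) is false for this instance. What the observation actually relies on is the property that no $s_j(0)$ has a proper in-neighbour --- precisely the leaf/root structure asserted without proof in Remark~\ref{remark:tree info graph}. A defensible write-up should either invoke that structural property explicitly as a hypothesis inherited from the construction in \cite{lamperski2015optimal} (noting that the example above puts it in tension with the stated assumptions), or identify the additional condition on the delay pattern under which it holds; it cannot be rederived from ``$i\in s_i(k'+1)$ plus no zero-delay cycles'' alone.
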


We will show that in each iteration $t\in\{0,\dots,T-1\}$ of the for loop in lines~4-14 of Algorithm~\ref{algorithm:control design}, the internal states $\hat{\zeta}_r(t)$ for all $r\in\U$ such that $i\in r$ (i.e., for all $r\ni i$) can be determined, via Eq.~\eqref{eqn:dynamics of zeta hat}, based on the current memory $\M_i$ of the algorithm and the state information contained in (a subset of) the information set $\I_i(t)$ defined in Eq.~\eqref{eqn:info set}. As we will see, Algorithm~\ref{algorithm:control design} maintains, in its current memory $\M_i$, the internal states (with potential time delays) for a certain subset of nodes in $\U$, via the recursion in Eq.~\eqref{eqn:dynamics of zeta hat}. Given those internal states, $\hat{\zeta}_r(t)$ for all $r\ni i$ can be determined using Eq.~\eqref{eqn:dynamics of zeta hat}. Moreover, the memory $\M_i$ of Algorithm~\ref{algorithm:control design} is recursively updated in the for loop in lines~4-14 of the algorithm. Formally, we have the following result for Algorithm~\ref{algorithm:control design}; the proof can be found in Appendix~\ref{sec:proof of control policy alg}.
\begin{proposition}
\label{prop:alg1 feasible}
Suppose that any controller $i\in\V$ at any time step $t\in\Z_{\ge0}$ has access to the states in $\tilde{\I}_i(t)$ defined as
\begin{equation}
\label{eqn:info set used}
\tilde{\I}_i(t)=\big\{x_j(k):j\in\V_i,k\in\{t-D_{\max}-1,\dots,t-D_{ij}\}\big\}\subseteq\I_i(t),
\end{equation}
where $\V_i=\{j\in\V:D_{ij}\neq+\infty\}$, and $\I_i(t)$ is defined in Eq.~\eqref{eqn:info set}. Then, the following properties hold for Algorithm~\ref{algorithm:control design}:\\
(a) The memory $\M_i$ of Algorithm~\ref{algorithm:control design} can be recursively updated such that at the beginning of any iteration $t\in\{0,\dots,T-1\}$ of the for loop in lines~4-14 of the algorithm,
\begin{multline}
\M_i=\big\{\hat{\zeta}_s(k):k\in\{t-2D_{\max}-1,\dots,t-D_{ij}-1\}, s\in\CL(\T_i),j\in\V,s_j(0)=s\big\}\\\cup\{\hat{\zeta}_s(t-D_{\max}-1):s\in\CR(\T_i)\}.\label{eqn:memory of the alg t-1}
\end{multline}
(b) The control input $\hat{u}_i(t)$ in line~13 can be determined using Eq.~\eqref{eqn:dynamics of zeta hat} and the states in the memory $\M_i$ after line~12 (and before line~14) in any iteration $t\in\{0,\dots,T-1\}$ of the for loop in lines~4-14 of Algorithm~\ref{algorithm:control design}.
\end{proposition}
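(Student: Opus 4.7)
My plan is to prove (a) and (b) by induction on $t$, with (a) serving as the inductive invariant that supplies the memory contents needed for (b) in each iteration. The base case is immediate: at $t=0$ the initialization $\M_i=\bar\M_i$ in Eq.~\eqref{eqn:initial M_i} matches Eq.~\eqref{eqn:memory of the alg t-1} instantiated at $t=0$, and every $\hat\zeta_r(0)=0$ makes $\hat u_i(0)=0$ trivial.

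For the inductive step, assume (a) at the start of iteration $t$. The leaf loop requires, for each $s=s_j(0)\in\CL(\T_i)$, the state samples $x_j(t-D_{ij})$ and $x_{j'}(t-D_{ij}-1)$ for $j'\in\CN_j$; Assumption~\ref{ass:info structure} together with the delay triangle inequality $D_{ij'}\le D_{ij}+D_{jj'}\le D_{ij}+1$ places both inside $\tilde\I_i(t)$. It also requires the neighboring controls $\hat u_{j'}(t-D_{ij}-1)$, which is the crux. I would first establish the structural lemma that whenever $j'\in\V_i$, every info-graph node $r=s_k(l)$ with $r\ni j'$ lies in $\T_i$: from $D_{j'k}\le l$ and $D_{ij'}<\infty$ the triangle inequality yields $D_{ik}<\infty$, so the root $s_k(\infty)$ contains $i$. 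Hence $\hat u_{j'}=\sum_{r\ni j'}I_{\{j'\},r}\hat K_r\hat\zeta_r$ involves only $\hat\zeta$ entries drawn from $i$'s own trees, and each value at time $t-D_{ij}-1$ is either already in $\M_i$ by the inductive hypothesis (possibly after a bounded forward propagation through Eq.~\eqref{eqn:dynamics of zeta hat} bottoming out at stored leaf entries within the $D_{\max}$-deep backlog) or has just been appended earlier in the same loop---this is where the decreasing-$D_{ij}$ ordering of Remark~\ref{remark:order of elements in CL} is essential, as it handles the boundary case $D_{ij'}=D_{ij}+1$ via an already-completed inner iteration. Once $\hat w_j(t-D_{ij}-1)$ is available, Observation~\ref{ob:simplify zeta_hat}(b) yields $\hat\zeta_s(t-D_{ij})$, and the root loop is handled identically via Observation~\ref{ob:simplify zeta_hat}(a) or (c). For line~13, unrolling Eq.~\eqref{eqn:dynamics of zeta hat} along the tree containing each $r\ni i$ expresses $\hat\zeta_r(t)$ as a linear combination of leaf values $\hat\zeta_{s_j(0)}(t-l_{s_j(0),r})$ with $D_{ij}\le l_{s_j(0),r}\le D_{\max}$ (the lower bound coming from $i\in r=s_j(l_{s_j(0),r})$ and the upper bound from the tree depth being at most $D_{\max}$), all of which lie in $\M_i$ after line~12, plus forward-iterated root entries for paths crossing a self-loop.

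Line~14 then deletes the obsolete leaf entries $\hat\zeta_s(t-2D_{\max}-1)$ and the stale root entries $\hat\zeta_s(t-D_{\max}-1)$, leaving $\M_i$ exactly equal to the right-hand side of Eq.~\eqref{eqn:memory of the alg t-1} shifted to $t+1$, closing the induction. The main obstacle, as the central paragraph above highlights, is recovering $\hat u_{j'}$ for $j'\in\CN_j$ purely from $\M_i$ and $\tilde\I_i(t)$, without any access to $j'$'s private memory; resolving it relies on the tree-containment lemma $j'\in\V_i\Rightarrow\{r\in\U:r\ni j'\}\subseteq\T_i$ together with the decreasing-delay ordering and $D_{\max}$-deep memory backlog that Algorithm~\ref{algorithm:control design} was engineered around.
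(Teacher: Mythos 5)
Your proposal is correct and follows essentially the same route as the paper: induction on $t$ with Eq.~\eqref{eqn:memory of the alg t-1} as the invariant, the delay triangle inequality $D_{ij'}\le D_{ij}+1$ to place the needed states in $\tilde{\I}_i(t)$, the tree-containment fact $\{r\in\U: r\ni j'\}\subseteq\T_i$ for $j'\in\V_i$, the decreasing-delay ordering of $\CL(\T_i)$ to resolve the boundary case $D_{ij'}=D_{ij}+1$, and unrolling Eq.~\eqref{eqn:dynamics of zeta hat} along the trees bottoming out at the $D_{\max}$-deep leaf backlog. You correctly identified the crux (reconstructing $\hat{u}_{j'}(t-D_{ij}-1)$ from controller $i$'s own memory) and the mechanism that resolves it, so the remaining work is only the bookkeeping of the explicit delay inequalities that the paper carries out in detail.
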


Since the proof of Proposition~\ref{prop:alg1 feasible} is rather involved and requires careful considerations of the structures of the directed graph $\G(\V,\A)$ and the information graph $\CP(\U,\CH)$ described in Section~\ref{sec:dist LQR known matrices}, we again use Example~\ref{exp:running example} to illustrate the steps of Algorithm~\ref{algorithm:control design} and the results and proof ideas of Proposition~\ref{prop:alg1 feasible}. 

First, we note from Fig.~\ref{fig:directed graph} and Eq.~\eqref{eqn:depth of T_i} that $D_{\max}=1$. Now, let us consider Algorithm~\ref{algorithm:control design} with respective to node $2$ in the directed graph $\G(\V,\A)$ given in Fig.~\ref{fig:directed graph}. We see that $\V_2=\{j\in\V:D_{ij}\neq\infty\}=\{2,3\}$, which implies via Eq.~\eqref{eqn:info set used} that $\tilde{\I}_2(t)=\{x_2(t-2),x_2(t-1),x_2(t),x_3(t-2),x_3(t-1)\}$ for all $t\in\{0,\dots,T-1\}$. One can check that the initial memory $\bar{\M}_2$ of Algorithm~\ref{algorithm:control design} given by Eq.~\eqref{eqn:initial M_i} satisfies Eq.~\eqref{eqn:memory of the alg t-1} for $t=0$, which implies that the memory $\M_2$ satisfies Eq.~\eqref{eqn:memory of the alg t-1} at the beginning of iteration $t=0$ of the for loop in lines~4-14 of the algorithm. 

To proceed, let us consider iteration $t=0$ of the for loop in lines~4-14 of the algorithm. Noting that $\CL(\T_2)=\{\{3\},\{1,2\}\}$ from Remark~\ref{remark:order of elements in CL}, Algorithm~\ref{algorithm:control design} first considers $s=\{3\}$ in the for loop in lines~5-9, which implies that $j=3$ in line~7. We then see from Eq.~\eqref{eqn:est w_i(t)} that in order to obtain $\hat{w}_3(t-2)$, we need to know $x_3(t-1)$, $x_3(t-2)$, $x_2(t-2)$, $\hat{u}_3(t-2)$ and $\hat{u}_2(t-2)$, where $x_3(t-1),x_3(t-2),x_2(t-2)\in\tilde{\I}_2(t)$, and $\hat{u}_2(t-2),\hat{u}_3(t-2)$ are given by Eq.~\eqref{eqn:control policy}. One can then check that the internal states $\hat{\zeta}_r(t^{\prime})$ that are needed to determine $\hat{u}_2(t-2)$ and $\hat{u}_3(t-2)$ are available in the current memory $\M_2$ of Algorithm~\ref{algorithm:control design} or become available via further applications of Eq.~\eqref{eqn:dynamics of zeta hat}. After $\hat{w}_3(t-2)$ is obtained, we see from Eq.~\eqref{eqn:dynamics of zeta hat} that $\hat{\zeta}_{\{3\}}(t-1)$ can also be obtained. Algorithm~\ref{algorithm:control design} then updates its current memory $\M_2$ in line~9 and finishes the iteration with respect to $s=\{3\}$ in the for loop in lines~5-9. Next, Algorithm~\ref{algorithm:control design} considers $s=\{1,2\}$ in the for loop in lines~5-9, which implies that $j=2$ in line~7. Following similar arguments to those above for $s=\{3\}$ and noting that the current memory $\M_2$ of Algorithm~\ref{algorithm:control design} has been updated, one can show that $\hat{\zeta}_{\{1,2\}}(t)$ can be obtained from Eq.~\eqref{eqn:dynamics of zeta hat}, based on the current memory of the algorithm. Algorithm~\ref{algorithm:control design} again updates its current memory $\M_2$ in line~9 and finishes the iteration with respect to $s=\{1,2\}$ in the for loop in lines~5-9.

Now, recalling that $\CR(\T_2)=\{1,2,3\}$ from Fig.~\ref{fig:info graph}, we see that Algorithm~\ref{algorithm:control design} considers $s=\{1,2,3\}$ in line~10. One can also check that $\hat{\zeta}_{\{1,2,3\}}(t)$ can be obtained from Eq.~\eqref{eqn:dynamics of zeta hat}, based on the current memory of the algorithm. Finally, based on the current memory $\M_2$ of Algorithm~\ref{algorithm:control design} after line~12, one can check that the control input $\hat{u}_2(t)$ can be determined from Eq.~\eqref{eqn:control policy}. Note that Algorithm~\ref{algorithm:control design} also removes certain internal states from its current memory in line~14 that will no longer be used. One can check that after the removal, the current memory $\M_2$ of Algorithm ~\ref{algorithm:control design} will satisfy Eq.~\eqref{eqn:memory of the alg t-1} at the beginning of iteration $t+1$ of the for loop in lines~4-14 of the algorithm, where $t=0$. One can then repeat the above arguments for iteration $t=1$ of the for loop in lines~4-14 of the algorithm and so on.

Several remarks pertaining to Algorithm~\ref{algorithm:control design} are now in order. First, since $|\CL(\T_i)|\le p$ and $|\CR(\T_i)|\le p$, one can show via the definition of Algorithm~\ref{algorithm:control design} that the number of the states in the memory $\M_i$ of Algorithm~\ref{algorithm:control design} is always upper bounded by $(2D_{\max}+2)p+2p$, where we note that $D_{\max}$ defined in Eq.~\eqref{eqn:depth of T_i} satisfies $D_{\max}\le p$, and $p$ is the number of nodes in the directed graph $\G(\V,\A)$. Moreover, one can check that Algorithm~\ref{algorithm:control design} can be implemented in polynomial time.

Second, it is worth noting that the control policy $\hat{u}_i(\cdot)$ for all $i\in\U$ that we proposed in Eq.~\eqref{eqn:control policy} is related to the certainty equivalent approach (e.g., \cite{aastrom2008adaptive}) that has been used for learning centralized LQR without any information constraints on the controllers (e.g., \cite{dean2020sample,mania2019certainty,cassel2020logarithmic}). It is known that the optimal solution to classic centralized LQR (i.e., problem~\eqref{eqn:dis LQR obj} without the information constraints) is given by a static state-feedback controller $u^{\star}(t)=Kx(t)$, where $K$ can be obtained from the solution to the Ricatti equation corresponding to $A$, $B$, $Q$ and $R$ (e.g., \cite{bertsekas2017dynamic}). The corresponding certainty equivalent controller simply takes the form $\hat{u}(t)=\hat{K}x(t)$, where $\hat{K}$ is obtained from the solution to the Ricatti equation corresponding to $\hat{A}$, $\hat{B}$, $Q$ and $R$, with $\hat{A}$ and $\hat{B}$ to be the estimates of $A$ and $B$, respectively. While we also leverage the structure of the optimal control policy $u^{\star}(\cdot)$ given in Eq.~\eqref{eqn:exp for u^star}, we cannot simply replace $K_r$ with $\hat{K}_r$ for all $r\in\U$ in Eq.~\eqref{eqn:exp for u^star}, where $\hat{K}_r$ is given by the Ricatti equations in Eqs.~\eqref{eqn:set of DARES K hat}-\eqref{eqn:set of DARES P hat}. As we argued in Remark~\ref{remark:decentralized setting}, this is because $u^{\star}(\cdot)$ is not a static state-feedback controller, but a linear dynamic controller based on the internal states $\zeta_r(\cdot)$ for all $r\in\U$, where the dynamics of $\zeta_r(\cdot)$ given by Eq.~\eqref{eqn:dynamics of zeta} also depends on $A$ and $B$. Thus, the control policy $\hat{u}_i(\cdot)$ that we proposed in Eq.~\eqref{eqn:control policy} is a linear dynamic controller based on $\hat{K}_r$ and the estimated internal states $\hat{\zeta}_r(\cdot)$ for all $r\in\U$, where the dynamics of $\hat{\zeta}_r(\cdot)$ given by Eq.~\eqref{eqn:dynamics of zeta hat} depends on $\hat{A}$ and $\hat{B}$. Such a more complicated form of $\hat{u}_i(\cdot)$ also creates several challenges when we analyze the corresponding suboptimality guarantees in the next section.

Third, for any $i\in\V$ and any $t\in\{0,\dots,T-1\}$, Proposition~\ref{prop:alg1 feasible} only requires controller $i$ to have access to a subset of the state information contained in the information set $\I_i(t)$.

Finally, we remark that Algorithm~\ref{algorithm:control design} is not the unique way to implement the control policy $\hat{u}_i(\cdot)$ given in Eq.~\eqref{eqn:control policy}, under the information constraints on each controller $i\in\V$.

\section{Suboptimality Guarantees}\label{sec:sub guarantees}
In this section, we characterize the suboptimality guarantees of the control policy $\hat{u}(\cdot)$ proposed in Section~\ref{sec:control design}. To begin with, in order to explicitly distinguish the states of the system in Eq.~\eqref{eqn:overall system} corresponding to the control policies $u^{\star}(\cdot)$ and $\hat{u}(\cdot)$ given by Eqs.~\eqref{eqn:exp for u^star} and \eqref{eqn:control policy}, respectively, we let $\hat{x}(t)$ denote the state of the system in Eq.~\eqref{eqn:overall system} corresponding to the control policy $\hat{u}(\cdot)$ given by Eq.~\eqref{eqn:control policy}, for $t\in\Z_{\ge0}$, i.e.,
\begin{equation}
\label{eqn:state x hat}
\hat{x}(t+1) = A\hat{x}(t)+B\hat{u}(t)+w(t),
\end{equation}
where we note from Eq.~\eqref{eqn:control policy} that $\hat{u}(t)=\sum_{s\in\U}I_{\V,s}\hat{K}_s\hat{\zeta}_s(t)$ with $\hat{K}_s$ and $\hat{\zeta}_s(t)$ given by Eqs.~\eqref{eqn:set of DARES K hat} and \eqref{eqn:dynamics of zeta hat}, respectively, for all $s\in\U$. We let $x(t)$ denote the state of the system in Eq.~\eqref{eqn:overall system} corresponding to the optimal control policy $u^{\star}(t)$ given by Eq.~\eqref{eqn:exp for u^star}, for $t\in\Z_{\ge0}$, i.e., 
\begin{equation}
\label{eqn:state x}
x(t+1) = Ax(t)+Bu^{\star}(t)+w(t),
\end{equation}
where $u^{\star}(t)=\sum_{s\in\U}I_{\V,s}K_s\zeta_s(t)$ with $K_s$ and $\zeta_s (t)$ given by Eqs.~\eqref{eqn:set of DARES K} and \eqref{eqn:dynamics of zeta}, respectively, for all $s\in\U$. In Eqs.~\eqref{eqn:state x hat}-\eqref{eqn:state x}, we set $\hat{x}(0)=x(0)=0$.

Moreover, for our analysis in the sequel, we introduce another control policy $\tilde{u}(t)$ given by 
\begin{equation}
\label{eqn:u tilde}
\tilde{u}_i(t)=\sum_{s\ni i}I_{\{i\},s}\hat{K}_s\tilde{\zeta}_s(t)\quad\forall i\in\V,
\end{equation}
for $t\in\Z_{\ge0}$, where for any $s\in\U$, $\hat{K}_s$ is given by Eq.~\eqref{eqn:set of DARES K hat}, and $\tilde{\zeta}_s(t)$ is given by
\begin{equation}
\label{eqn:dynamics of zeta_tilde}
\tilde{\zeta}_s(t+1) = \sum_{r\rightarrow s}(A_{sr}+B_{sr}\hat{K}_r)\tilde{\zeta}_r(t) + \sum_{w_i\rightarrow s}I_{s,\{i\}}w_i(t),
\end{equation}
with $\tilde{\zeta}_s(0)=\sum_{w_i\rightarrow s}I_{s,\{i\}}x_i(0)=0$. We then let $\tilde{x}(t)$ denote the state of the system in Eq.~\eqref{eqn:overall system} corresponding to $\tilde{u}_i(\cdot)$, for $t\in\Z_{\ge0}$, i.e.,
\begin{equation}
\label{eqn:state x tilde}
\tilde{x}(t+1) = A\tilde{x}(t)+B\tilde{u}(t)+w(t),
\end{equation}
where $\tilde{u}(t)=\sum_{s\in\U}I_{\V,s}\hat{K}_s\tilde{\zeta}_s(t)$ from Eq.~\eqref{eqn:u tilde}, and we set $\tilde{x}(0)=x(0)=0$. Roughly speaking, the auxiliary control policy $\tilde{u}_i(\cdot)$ and the corresponding internal state $\tilde{\zeta}_s(\cdot)$ introduced above allow us to decompose the suboptimality gap $\hat{J}-J_{\star}$ of the control policy $\hat{u}(\cdot)$ into two terms that are due to $\hat{K}_s$ and $\hat{\zeta}_s(\cdot)$, respectively, for all $s\in\V$. We then have the following result; the proof follows directly from \cite[Lemma~14]{lamperski2015optimal} and is thus omitted. Note that Lemma~\ref{lemma:zeta_s1 and zeta_s2 indept} is a consequence of the partially nested information structure and the structure of the information graph described in Section~\ref{sec:dist LQR known matrices}.
\begin{lemma}
\label{lemma:zeta_s1 and zeta_s2 indept}
For any $t\in\Z_{\ge0}$, the following hold: (a) $\E[\tilde{\zeta}_s(t)]=0$, for all $s\in\U$; (b) $\tilde{x}(t)=\sum_{s\in\U}I_{\V,s}\tilde{\zeta}_s(t)$;
(c) $\tilde{\zeta}_{s_1}(t)$ and $\tilde{\zeta}_{s_2}(t)$ are independent for all $s_1,s_2\in\U$ with $s_1\neq s_2$.
\end{lemma}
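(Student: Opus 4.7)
The plan is to prove the three claims by induction on $t\in\Z_{\ge0}$, since (a) and (b) are bookkeeping while (c) is the substantive statement. All three base cases hold at $t=0$ because $\tilde{\zeta}_s(0)=0$ for every $s\in\U$ and $\tilde{x}(0)=0$. For part (a), taking expectations in Eq.~\eqref{eqn:dynamics of zeta_tilde} and using $\E[w_i(t)]=0$ yields $\E[\tilde{\zeta}_s(t+1)]=\sum_{r\to s}(A_{sr}+B_{sr}\hat{K}_r)\E[\tilde{\zeta}_r(t)]=0$ by the induction hypothesis.

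For part (b), I would substitute $\tilde{x}(t)=\sum_s I_{\V,s}\tilde{\zeta}_s(t)$ and $\tilde{u}(t)=\sum_s I_{\V,s}\hat{K}_s\tilde{\zeta}_s(t)$ into Eq.~\eqref{eqn:state x tilde} and match the result to $\sum_s I_{\V,s}\tilde{\zeta}_s(t+1)$ expanded via Eq.~\eqref{eqn:dynamics of zeta_tilde}. The noise terms match immediately because each $w_i$ enters only the leaf $s_i(0)$, so $\sum_s\sum_{w_i\to s}I_{\V,s}I_{s,\{i\}}w_i(t)=\sum_{i\in\V}I_{\V,\{i\}}w_i(t)=w(t)$. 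The remaining algebraic identity is $AI_{\V,r}=I_{\V,s}A_{sr}$ (and similarly for $B$) whenever $r\to s$, i.e., rows of $A$ outside $s$ vanish on columns in $r$. This follows from Assumption~\ref{ass:info structure}: if $r=s_j(k)$, then $s=s_j(k+1)$, and $A_{i,j_1}\neq 0$ with $j_1\in r$ forces $D_{i,j_1}\le 1$ and thus $D_{ij}\le D_{i,j_1}+D_{j_1,j}\le k+1$, placing $i\in s$. Re-indexing $\sum_s\sum_{r\to s}$ as $\sum_r$ via the unique-successor property in Lemma~\ref{lemma:properties of info graph}(i) then closes the induction.

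For part (c), the main idea is noise tracking in the forest $\CP(\U,\CH)$. I would show by induction that $\tilde{\zeta}_s(t)$ is a linear combination of only those $w_i(k)$, $0\le k\le t-1$, for which the noise $w_i$, injected at the leaf $s_i(0)$ at time $k$ and advanced one hop per time step along the unique forward path in $\CP(\U,\CH)$ (with the convention that after reaching a self-loop root it remains there), currently sits at $s$ at time $t$. Since each $r\in\U$ has a unique successor and each $w_i$ enters only at $s_i(0)$, for any fixed $(i,k)$ there is exactly one $s\in\U$ such that $w_i(k)$ appears in $\tilde{\zeta}_s(t)$. Hence the sets of noise samples supporting $\tilde{\zeta}_{s_1}(t)$ and $\tilde{\zeta}_{s_2}(t)$ in the independent basis $\{w_i(k)\}_{i,k}$ are disjoint for $s_1\neq s_2$, and joint Gaussianity upgrades disjointness of supports to independence.

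The main obstacle is the unique-position claim for each noise sample in part (c). The subtlety comes from self-loops at root nodes, which cause a noise to accumulate at the root rather than pass through it; I would handle this by verifying directly from Eq.~\eqref{eqn:dynamics of zeta_tilde} and Lemma~\ref{lemma:properties of info graph}(ii) that once $w_i(k)$ reaches the root of its tree it remains there at all later times but contributes to no other node's $\tilde{\zeta}$. Part (b) also requires care with the matrix-indexing identity $AI_{\V,r}=I_{\V,s}A_{sr}$, but this is a clean consequence of Assumption~\ref{ass:info structure} as sketched above.
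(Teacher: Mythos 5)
Your proposal is correct. The paper itself omits the proof of this lemma, deferring to \cite[Lemma~14]{lamperski2015optimal}, and your argument is essentially a self-contained reconstruction of that argument: the block identity $AI_{\V,r}=I_{\V,s}A_{sr}$ for $r\to s$ (which is exactly where Assumption~\ref{ass:info structure}, i.e.\ partial nestedness, enters) gives part~(b), and the observation that each sample $w_i(k)$ enters only at the leaf $s_i(0)$ and then occupies a unique node of the forest at every later time (absorbing at the self-loop root) gives the disjoint-support/independence claim in part~(c). The only cosmetic remark is that you do not need joint Gaussianity for the last step: measurable functions of disjoint sub-collections of independent random variables are automatically independent.
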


Using the above notations, the cost of the optimization problem in \eqref{eqn:dis LQR obj} corresponding to the control policy $\hat{u}(\cdot)$ (i.e., $\hat{J}$) can be written as
\begin{equation}
\label{eqn:J hat}
\hat{J} = \limsup_{T\to\infty}\E\Big[\frac{1}{T}\sum_{t=0}^{T-1}\big(\hat{x}(t)^{\top}Q\hat{x}(t)+\hat{u}(t)^{\top}R\hat{u}(t)\big)\Big],
\end{equation}
where we use $\limsup$ instead of $\lim$ since the limit may not exist. Furthermore, we let $\tilde{J}$ denote the cost of the optimization problem in \eqref{eqn:dis LQR obj} corresponding to the control policy $\tilde{u}(\cdot)$ given in Eq.~\eqref{eqn:u tilde},\footnote{We will show in Proposition~\ref{prop:J_tilde minus J} that the limit in Eq.~\eqref{eqn:J tilde} exists.}
\begin{equation}
\label{eqn:J tilde}
\tilde{J} = \lim_{T\to\infty}\E\Big[\frac{1}{T}\sum_{t=0}^{T-1}\big(\tilde{x}(t)^{\top}Q\tilde{x}(t)+\tilde{u}(t)^{\top}R\tilde{u}(t)\big)\Big].
\end{equation}

Supposing that the estimates $\hat{A}$ and $\hat{B}$ satisfy $\norm{\hat{A}-A}\le\varepsilon$ and $\norm{\hat{B}-B}\le\varepsilon$ with $\varepsilon\in\R_{>0}$, our ultimate goal in this section is to provide an upper bound on the suboptimality gap $\hat{J}-J_{\star}$, where $J_{\star}$ is the optimal cost given by Eq.~\eqref{eqn:opt J}. To this end, we first decompose $\hat{J}-J_{\star}$ into $\tilde{J}-J_{\star}$ and $\hat{J}-\tilde{J}$, and then upper bound $\tilde{J}-J_{\star}$ and $\hat{J}-\tilde{J}$ separately. Such a decomposition of $\hat{J}-J_{\star}$ is enabled by the structure of the control policy $\hat{u}(\cdot)$ described in Section~\ref{sec:control design}. Specifically, one may view $\tilde{J}-J_{\star}$ as the suboptimality due to $\hat{K}_r$ given by Eq.~\eqref{eqn:set of DARES K hat} for all $r\in\U$ , and view $\hat{J}-\tilde{J}$ as the suboptimality due to $\hat{\zeta}_r(t)$ given by Eq.~\eqref{eqn:dynamics of zeta hat} for all $r\in\U$ and for all $t\in\Z_{\ge0}$. Moreover, the suboptimality introduced by $\hat{\zeta}_r(t)$ is due to the fact that the dynamics of $\hat{\zeta}_r(t)$ given in Eq.~\eqref{eqn:dynamics of zeta hat} for all $r\in\U$ are characterized by $\hat{A}$, $\hat{B}$ and $\hat{w}(t)$, where $\hat{w}(t)$ given by Eq.~\eqref{eqn:est w_i(t)} is an estimate of the disturbance $w(t)$ in Eq.~\eqref{eqn:overall system}.

To proceed, we recall from Lemma~\ref{lemma:opt solution} that for any $s\in\U$ that has a self loop, the matrix $A_{ss}+B_{ss}K_s$ is stable, where $K_s$ is given by Eq.~\eqref{eqn:set of DARES K}. We then have from the Gelfand formula that for any $s\in\U$ that has a self loop, there exist $\kappa_s\in\R_{\ge1}$ and $\gamma_s\in\R$ with $\rho(A_{ss}+B_{ss}K_s)<\gamma_s<1$ such that $\norm{(A_{ss}+B_{ss}K_s)^k}\le\kappa_s\gamma_s^k$ for all $k\in\Z_{\ge0}$. For notational simplicity, let us denote
\begin{equation}
\label{eqn:kappa and gamma}
\gamma=\max\big\{\max_{s\in\CR}\gamma_s,\gamma_0\big\},\ \kappa=\max\big\{\max_{s\in\CR}\kappa_s,\kappa_0\},
\end{equation}
where $\CR\subseteq\U$ denotes the set of root nodes in $\U$, and $\kappa_0\in\R_{\ge1}$ and $\gamma_0\in\R$ with $\rho(A)<\gamma_0<1$ are given in Assumption~\ref{ass:stable A}. Thus, we see from Assumption~\ref{ass:stable A} and our above arguments that $\norm{(A_{ss}+B_{ss}K_s)^k}\le\kappa\gamma^k$ for all $s\in\CR$ and for all $k\in\Z_{\ge0}$, and $\norm{A^k}\le\kappa\gamma^k$ for all $k\in\Z_{\ge0}$, where $\kappa\in\Z_{\ge1}$ and $0<\gamma<1$. Moreover, we denote
\begin{equation}
\label{eqn:Gamma}
\begin{split}
\Gamma &= \max\big\{\norm{A},\norm{B},\max_{s\in\U}\norm{P_s},\max_{s\in\U}\norm{K_s}\big\},\\
\tilde{\Gamma} &= \Gamma+1.
\end{split}
\end{equation}
For our analysis in this section, we will make the following assumption; similar assumptions can be found in, e.g., \cite{cohen2019learning,mania2019certainty,dean2020sample}.
\begin{assumption}
\label{ass:cost matrices}
The cost matrices $R$ and $Q$ in \eqref{eqn:dis LQR obj} satisfy that $\sigma_n(R)\ge1$ and $\sigma_m(Q)\ge1$.
\end{assumption}
Note that the above assumption is not more restrictive than assuming that $R$ and $Q$ are positive definite. Specifically, supposing $R\succ0$ and $Q\succ0$, one can assume without loss of generality that $\sigma_n(R)\ge1$ and $\sigma_m(Q)\ge1$. This is because one can check that scaling the objective function in \eqref{eqn:dis LQR obj} by a positive constant does not change $K_r$ in the optimal solution to \eqref{eqn:dis LQR obj} provided in Lemma~\ref{lemma:opt solution}, for any $r\in\U$.

\subsection{Perturbation Bounds on Solutions to Ricatti Equations}\label{sec:perturbation bounds}
Supposing $\norm{\hat{A}-A}\le\varepsilon$ and $\norm{\hat{B}-B}\le\varepsilon$ with $\varepsilon\in\R_{>0}$, in this subsection we aim to provide upper bounds on the perturbations $\norm{\hat{P}_r-P_r}$ and $\norm{\hat{K}_r-K_r}$ for all $r\in\U$, where $P_r$ (resp., $\hat{P}_r$) is given by Eq.~\eqref{eqn:set of DARES P} (resp., Eq.~\eqref{eqn:set of DARES P hat}), and $K_r$ (resp., $\hat{K}_r$) is given by Eq.~\eqref{eqn:set of DARES K} (resp., Eq.~\eqref{eqn:set of DARES K hat}). We note from Lemma~\ref{lemma:opt solution} that for any $r\in\U$ that has a self loop, Eq.~\eqref{eqn:set of DARES P} (resp., Eq.~\eqref{eqn:set of DARES P hat}) reduces to a discrete Ricatti equation in $P_r$ (resp., $\hat{P}_r$). The following results characterize the bounds on $\norm{\hat{P}_r-P_r}$ and $\norm{\hat{K}_r-K_r}$, for all $r\in\U$; the proofs can be found in Appendix~\ref{sec:proofs of DARE bounds}.
\begin{lemma}
\label{lemma:upper bounds on P_r hat and K_r hat self loop}
Suppose Assumptions~\ref{ass:pairs} and \ref{ass:cost matrices} hold, and $\norm{\hat{A}-A}\le\varepsilon$ and $\norm{\hat{B}-B}\le\varepsilon$, where $\varepsilon\in\R_{>0}$. Then, for any $r\in\U$ that has a self loop, the following hold:
\begin{equation}
\label{eqn:P_r hat and P_r self loop}
\norm{\hat{P}_r-P_r}\le6\frac{\kappa^2}{1-\gamma^2}\tilde{\Gamma}^5(1+\sigma_1(R^{-1}))\varepsilon\le\frac{1}{6},
\end{equation}
\begin{equation}
\label{eqn:K_r hat and K_r self loop}
\norm{\hat{K}_r-K_r}\le18\frac{\kappa^2}{1-\gamma^2}\tilde{\Gamma}^8(1+\sigma_1(R^{-1}))\varepsilon\le1,
\end{equation}
and
\begin{equation}
\label{eqn:K_r hat stabilizable}
\norm{(A_{rr}+B_{rr}\hat{K}_r)^k}\le\kappa(\frac{\gamma+1}{2})^k,\ \forall k\ge0,
\end{equation}
under the assumption that 
\begin{equation}
\label{eqn:upper bound on epsilon 1}
\varepsilon\le\frac{1}{768}\frac{(1-\gamma^2)^2}{\kappa^4}\tilde{\Gamma}^{-11}(1+\sigma_1(R^{-1}))^{-2},
\end{equation}
where $P_r$ (resp., $\hat{P}_r$) is given by Eq.~\eqref{eqn:set of DARES P} (resp., Eq.~\eqref{eqn:set of DARES P hat}), $K_r$ (resp., $\hat{K}_r$) is given by Eq.~\eqref{eqn:set of DARES K} (resp., \eqref{eqn:set of DARES K hat}), $\gamma$ and $\kappa$ are defined in \eqref{eqn:kappa and gamma}, and $\tilde{\Gamma}$ is defined in~\eqref{eqn:Gamma}.
\end{lemma}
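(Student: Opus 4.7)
The plan is to exploit the fact that for any $r \in \mathcal{U}$ with a self loop, Eqs.~\eqref{eqn:set of DARES K}--\eqref{eqn:set of DARES P} (resp.\ Eqs.~\eqref{eqn:set of DARES K hat}--\eqref{eqn:set of DARES P hat}) collapse to the standard discrete algebraic Riccati equation (DARE) for the pair $(A_{rr},B_{rr},Q_{rr},R_{rr})$ (resp.\ $(\hat{A}_{rr},\hat{B}_{rr},Q_{rr},R_{rr})$). Since by assumption $\|\hat{A}_{rr}-A_{rr}\|\le\|\hat{A}-A\|\le\varepsilon$ and $\|\hat{B}_{rr}-B_{rr}\|\le\|\hat{B}-B\|\le\varepsilon$, the three conclusions will reduce to classical DARE-perturbation statements (e.g.\ the bounds in Mania--Tu--Recht and Simchowitz--Foster), applied with constants chosen to match our notation $\kappa,\gamma,\tilde{\Gamma}$. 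Throughout the argument Assumption~\ref{ass:pairs} guarantees that the DARE in question has a unique stabilizing solution $P_r$, while Assumption~\ref{ass:cost matrices} (giving $\sigma_n(R)\ge 1$, $\sigma_m(Q)\ge 1$) will absorb various $\|R^{-1}\|$-like terms into $(1+\sigma_1(R^{-1}))$.

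First I would prove Eq.~\eqref{eqn:P_r hat and P_r self loop}. Writing the DARE operator as $\mathcal{F}_{A,B}(X)=Q_{rr}+A^\top X A - A^\top X B(R_{rr}+B^\top X B)^{-1}B^\top X A$, I would set $\Delta P:=\hat{P}_r-P_r$ and expand $\hat{P}_r=\mathcal{F}_{\hat A_{rr},\hat B_{rr}}(\hat{P}_r)$ about $(A_{rr},B_{rr},P_r)$. A standard fixed-point argument (e.g.\ Sun's implicit-function style analysis) shows that if $\varepsilon$ is small enough relative to the Lyapunov gain, the operator is a contraction in a ball around $P_r$, yielding $\|\Delta P\|\le C_P\,\varepsilon$ where $C_P$ is proportional to $\kappa^2/(1-\gamma^2)$ (the Lyapunov-inverse norm) times polynomials in $\|A\|,\|B\|,\|P_r\|$ and a factor $1+\sigma_1(R^{-1})$ from differentiating $(R_{rr}+B^\top P B)^{-1}$. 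Packaging $\|A\|,\|B\|,\|P_r\|\le\Gamma\le\tilde\Gamma$ according to Eq.~\eqref{eqn:Gamma} yields the stated constant $6\kappa^2(1-\gamma^2)^{-1}\tilde\Gamma^5(1+\sigma_1(R^{-1}))$. The assumption in Eq.~\eqref{eqn:upper bound on epsilon 1} is precisely what forces $C_P\,\varepsilon\le 1/6$ and also guarantees the contraction property used above.

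Next I would derive Eq.~\eqref{eqn:K_r hat and K_r self loop}. Directly, $\hat{K}_r-K_r$ is a rational expression in $(A,B,P,R)$ and its hatted counterparts; by adding and subtracting intermediate terms I would split $\hat{K}_r-K_r$ into three pieces corresponding to perturbations in $A$, $B$, and $P$ separately. Each piece is controlled by $\|\hat{A}-A\|$, $\|\hat{B}-B\|$, $\|\Delta P\|$ together with $\|(R_{rr}+B^\top P B)^{-1}\|\le\sigma_1(R^{-1})$ and $\|A\|,\|B\|,\|P_r\|\le\tilde\Gamma$. To invert $R_{rr}+\hat{B}_{rr}^\top\hat{P}_r\hat{B}_{rr}$ I would use the identity $(X+E)^{-1}=X^{-1}-X^{-1}E(X+E)^{-1}$ together with $\sigma_n(R_{rr})\ge 1$ from Assumption~\ref{ass:cost matrices}; this keeps every inverse well-defined as long as $\varepsilon$ satisfies Eq.~\eqref{eqn:upper bound on epsilon 1}. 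Feeding in the bound on $\|\Delta P\|$ from the previous step produces the claimed constant $18\kappa^2(1-\gamma^2)^{-1}\tilde\Gamma^8(1+\sigma_1(R^{-1}))$, and the numerical choice in Eq.~\eqref{eqn:upper bound on epsilon 1} gives $\|\hat K_r-K_r\|\le 1$.

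Finally I would prove the stability estimate Eq.~\eqref{eqn:K_r hat stabilizable}. I would write
\[
A_{rr}+B_{rr}\hat{K}_r = (A_{rr}+B_{rr}K_r) + B_{rr}(\hat{K}_r-K_r),
\]
so that $\hat M := A_{rr}+B_{rr}\hat K_r$ is a perturbation of the stable matrix $M:=A_{rr}+B_{rr}K_r$, which by definition of $\kappa,\gamma$ in Eq.~\eqref{eqn:kappa and gamma} satisfies $\|M^k\|\le\kappa\gamma^k$. A standard perturbation lemma for stable matrices (expanding $\hat M^k=(M+E)^k$ as a sum over binary strings and bounding term-by-term, or passing through the Lyapunov equation $\sum_{k\ge 0}(M^\top)^k M^k$) shows that if $\|E\|\le(1-\gamma)/(2\kappa)$, then $\|\hat M^k\|\le\kappa\bigl(\tfrac{1+\gamma}{2}\bigr)^k$. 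With $\|E\|\le \|B\|\cdot\|\hat K_r-K_r\|\le\tilde\Gamma\cdot 18\kappa^2(1-\gamma^2)^{-1}\tilde\Gamma^8(1+\sigma_1(R^{-1}))\varepsilon$, the smallness condition on $\|E\|$ is again implied by Eq.~\eqref{eqn:upper bound on epsilon 1}, which completes the proof.

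The main obstacle is the first step: executing the DARE perturbation analysis with the explicit constants stated. The difficulty is purely bookkeeping—tracking how $\kappa^2/(1-\gamma^2)$, $\tilde\Gamma^5$ and $(1+\sigma_1(R^{-1}))$ arise from (i) the Lyapunov operator inverse associated with $M=A_{rr}+B_{rr}K_r$, (ii) the polynomial dependence of $\mathcal{F}$ on $(A,B,X)$, and (iii) the linearization of $(R_{rr}+B^\top X B)^{-1}$. Once those constants are pinned down, the bounds on $\hat K_r-K_r$ and $\hat M^k$ follow by elementary norm manipulations and Assumption~\ref{ass:cost matrices}.
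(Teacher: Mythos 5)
Your proposal is correct and follows essentially the same route as the paper: the paper also reduces the self-loop case to a standard DARE for $(A_{rr},B_{rr},Q_{rr},R_{rr})$, obtains \eqref{eqn:P_r hat and P_r self loop} by invoking the Riccati perturbation bound of Mania--Tu--Recht (their Proposition~2) with submatrix norms controlled by $\tilde\Gamma$, derives \eqref{eqn:K_r hat and K_r self loop} by the same add-and-subtract decomposition of $\hat K_r-K_r$ feeding in the bound on $\|\hat P_r-P_r\|$, and gets \eqref{eqn:K_r hat stabilizable} from the stable-matrix power perturbation lemma applied to $A_{rr}+B_{rr}K_r$ perturbed by $B_{rr}(\hat K_r-K_r)$. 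The only difference is that the step you flag as the main obstacle (re-deriving the DARE perturbation constants via a contraction argument) is short-circuited in the paper by citing that proposition directly, which is exactly the alternative you already identify.
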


\begin{lemma}
\label{lemma:upper bounds on P_r and K_r general}
Suppose Assumptions~\ref{ass:pairs} and \ref{ass:cost matrices} hold, and $\norm{\hat{A}-A}\le\varepsilon$ and $\norm{\hat{B}-B}\le\varepsilon$, where $\varepsilon\in\R_{>0}$. Then, for any $r\in\U$ that does not have a self loop, the following hold:
\begin{equation}
\label{eqn:K_r hat and K_r general}
\norm{\hat{K}_r-K_r}\le18\frac{\kappa^2}{1-\gamma^2}\tilde{\Gamma}^8(1+\sigma_1(R^{-1}))(20\tilde{\Gamma}^9\sigma_1(R))^{l_{rs}-1}\varepsilon\le1,
\end{equation}
and
\begin{equation}
\label{eqn:P_r hat and P_r general}
\norm{\hat{P}_r-P_r}\le6\frac{\kappa^2}{1-\gamma^2}\tilde{\Gamma}^5(1+\sigma_1(R^{-1}))(20\tilde{\Gamma}^9\sigma_1(R))^{l_{rs}}\varepsilon\le\frac{1}{6},
\end{equation}
under the assumption that 
\begin{equation}
\label{eqn:upper bound on epsilon 2}
\varepsilon\le\frac{1}{768}\frac{(1-\gamma^2)^2}{\kappa^4}\tilde{\Gamma}^{-11}(1+\sigma_1(R^{-1}))^{-2}(20\tilde{\Gamma}^9\sigma_1(R))^{-D_{\max}},
\end{equation}
where $K_r$ (resp., $\hat{K}_r$) is given by Eq.\eqref{eqn:set of DARES K} (resp., Eq.~\eqref{eqn:set of DARES K hat}), $P_r$ (resp., $\hat{P}_r$) is given by Eq.~\eqref{eqn:set of DARES P} (resp., Eq.~\eqref{eqn:set of DARES P hat}), $\tilde{\Gamma}$ is defined in~\eqref{eqn:Gamma}, $\kappa$ and $\gamma$ are defined in ~\eqref{eqn:kappa and gamma}, $l_{rs}$ is the length of the unique directed path from node $r$ to node $s$ in $\CP(\U,\CH)$ with $s\in\U$ to be the unique root node that is reachable from $r$, and $D_{\max}$ is defined in Eq.~\eqref{eqn:depth of T_i}.
\end{lemma}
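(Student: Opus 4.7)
I would proceed by induction on $l_{rs}$, the length of the directed path in $\CP(\U,\CH)$ from $r$ to its unique successor root $s$. Since $r$ has no self loop, $l_{rs}\ge 1$. Let $s'\in\U$ denote the unique node with $r\to s'$. When $l_{rs}=1$, $s'=s$ is a root so Lemma~\ref{lemma:upper bounds on P_r hat and K_r hat self loop} already yields $\norm{\hat P_{s'}-P_{s'}}\le 6\tfrac{\kappa^2}{1-\gamma^2}\tilde\Gamma^5(1+\sigma_1(R^{-1}))\varepsilon$ and $\norm{\hat P_{s'}-P_{s'}}\le\tfrac16$; when $l_{rs}\ge 2$, $s'$ is itself non-root with $l_{s's}=l_{rs}-1\le D_{\max}-1$, so by the inductive hypothesis the same form of bound holds, with an extra factor $(20\tilde\Gamma^{9}\sigma_1(R))^{l_{s's}}$ and still $\norm{\hat P_{s'}-P_{s'}}\le\tfrac16$, ensured by~\eqref{eqn:upper bound on epsilon 2}.

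The inductive step is a one-step matrix perturbation analysis of the update~\eqref{eqn:set of DARES K}-\eqref{eqn:set of DARES P}. Set $M=R_{rr}+B_{s'r}^\top P_{s'}B_{s'r}$ and $\hat M=R_{rr}+\hat B_{s'r}^\top\hat P_{s'}\hat B_{s'r}$; by Assumption~\ref{ass:cost matrices}, $M,\hat M\succeq R_{rr}\succeq I$ so $\norm{M^{-1}},\norm{\hat M^{-1}}\le\sigma_1(R^{-1})$. Expanding the difference $\hat M-M$ (and similarly $\hat B_{s'r}^\top\hat P_{s'}\hat A_{s'r}-B_{s'r}^\top P_{s'}A_{s'r}$) as a telescoping sum of three single-factor perturbations and using $\norm{A_{s'r}},\norm{B_{s'r}},\norm{P_{s'}}\le\tilde\Gamma$ together with $\norm{\hat P_{s'}-P_{s'}}\le\tfrac16$ yields $\norm{\hat M-M}\le c_1\tilde\Gamma^2(\varepsilon+\norm{\hat P_{s'}-P_{s'}})$ and an analogous bound for the numerator. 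The identity $\hat M^{-1}-M^{-1}=\hat M^{-1}(M-\hat M)M^{-1}$ then bounds $\norm{\hat M^{-1}-M^{-1}}$ by $\sigma_1(R^{-1})^2\norm{\hat M-M}$, after which writing
\[
\hat K_r-K_r=-(\hat M^{-1}-M^{-1})\hat B_{s'r}^\top\hat P_{s'}\hat A_{s'r}-M^{-1}\bigl(\hat B_{s'r}^\top\hat P_{s'}\hat A_{s'r}-B_{s'r}^\top P_{s'}A_{s'r}\bigr)
\]
gives a bound of the form $\norm{\hat K_r-K_r}\le c_K\tilde\Gamma^{\alpha_K}(1+\sigma_1(R^{-1}))\sigma_1(R)(\varepsilon+\norm{\hat P_{s'}-P_{s'}})$. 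Plugging in the inductive bound on $\norm{\hat P_{s'}-P_{s'}}$ and choosing the constants so that the per-step amplification is exactly $\beta\triangleq 20\tilde\Gamma^{9}\sigma_1(R)$ yields~\eqref{eqn:K_r hat and K_r general}.

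For $P_r$, I would apply the same telescoping-decomposition strategy to~\eqref{eqn:set of DARES P}, splitting $\hat K_r^\top R_{rr}\hat K_r-K_r^\top R_{rr}K_r$ and $(\hat A_{s'r}+\hat B_{s'r}\hat K_r)^\top\hat P_{s'}(\hat A_{s'r}+\hat B_{s'r}\hat K_r)-(A_{s'r}+B_{s'r}K_r)^\top P_{s'}(A_{s'r}+B_{s'r}K_r)$ into single-factor perturbations. Using $\norm{\hat K_r-K_r}\le 1$ (from the $K$-bound just established) and hence $\norm{\hat K_r}\le\tilde\Gamma+1\le 2\tilde\Gamma$, each such term is at most a constant times $\tilde\Gamma^3\sigma_1(R)(\varepsilon+\norm{\hat P_{s'}-P_{s'}}+\norm{\hat K_r-K_r})$; substituting the $K$-bound produces~\eqref{eqn:P_r hat and P_r general} with the same amplification $\beta$. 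The trailing inequalities $\norm{\hat K_r-K_r}\le 1$ and $\norm{\hat P_r-P_r}\le\tfrac16$ follow because the smallness assumption~\eqref{eqn:upper bound on epsilon 2} contains exactly the factor $\beta^{-D_{\max}}$ that cancels the largest possible amplification $\beta^{l_{rs}}\le\beta^{D_{\max}}$, closing the induction.

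\textbf{Main obstacle.} The substantive difficulty is not any single perturbation inequality but uniform bookkeeping: one must pin down a single per-step amplification $\beta=20\tilde\Gamma^{9}\sigma_1(R)$ governing both the $K$- and the $P$-update so that the induction telescopes to a clean $\beta^{l_{rs}}$ bound rather than a doubly-exponential tower, while simultaneously maintaining the invariants $\norm{\hat P_{s'}-P_{s'}}\le\tfrac16$ and $M^{-1}\preceq\sigma_1(R^{-1})I$ under which the inverse-perturbation identity remains quantitatively valid. The choice of constants in~\eqref{eqn:upper bound on epsilon 2} is dictated precisely by the requirement that these invariants survive all $l_{rs}\le D_{\max}$ iterations.
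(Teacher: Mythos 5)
Your proposal is correct and follows essentially the same route as the paper's proof: induction on $l_{rs}$ with base case supplied by Lemma~\ref{lemma:upper bounds on P_r hat and K_r hat self loop}, a one-step telescoping perturbation analysis of Eqs.~\eqref{eqn:set of DARES K}--\eqref{eqn:set of DARES P} using $M^{-1}\preceq\sigma_1(R^{-1})I$ and the resolvent identity, and a uniform per-step amplification $20\tilde{\Gamma}^9\sigma_1(R)$ whose worst case $l_{rs}\le D_{\max}$ is exactly what \eqref{eqn:upper bound on epsilon 2} is calibrated to absorb so that the invariants $\norm{\hat{K}_r-K_r}\le1$ and $\norm{\hat{P}_r-P_r}\le\frac{1}{6}$ persist. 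The paper carries out the same bookkeeping, citing the argument of \cite[Lemma~2]{mania2019certainty} for the gain perturbation where you write the inverse-difference identity explicitly.
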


Consider any $r\in\U$ with a self loop and suppose Eq.~\eqref{eqn:upper bound on epsilon 1} holds. One can show via Eq.~\eqref{eqn:K_r hat stabilizable} and \cite[Lemma~12]{mania2019certainty} that $\hat{K}_r$ given by Eq.~\eqref{eqn:set of DARES K hat} is also stabilizing for the pair $(\hat{A}_{rr},\hat{B}_{rr})$, i.e., $\hat{A}_{rr}+\hat{B}_{rr}\hat{K}_r$ is stable (see our arguments for \eqref{eqn:L_hat^k minus L_tilde^k} in Appendix~\ref{sec:proofs of bounds on costs} for more details). Moreover, it is well-known (e.g., \cite{bertsekas2017dynamic}) that a stabilizing solution $\hat{P}_r$ to the Ricatti equation in Eq.~\eqref{eqn:set of DARES P hat} exists if and only if $(\hat{A}_{rr},\hat{B}_{rr})$ is stabilizable and $(\hat{A}_{rr},C_{rr})$ (with $Q_{rr}=C_{rr}^TC_{rr}$) is detectable.\footnote{A solution $\hat{P}_r$ to the Ricatti equation in Eq.~(19) is stabilizing if and only if $\hat{A}_{rr}+\hat{B}_{rr}\hat{K}_r$ (with $\hat{K}_r$ given by Eq.~(18)) is stable.} The above arguments together also imply that $(\hat{A}_{rr},\hat{B}_{rr})$ is stabilizable and $(\hat{A}_{rr},C_{rr})$ (with $Q_{rr}=C_{rr}^{\top}C_{rr}$) is detectable for all $r\in\U$, under the assumption on $\varepsilon$ given by Eq.~\eqref{eqn:upper bound on epsilon 1}.

\subsection{Perturbation Bounds on Costs}\label{sec:perturb bound on cost}
Suppose $\norm{\hat{A}-A}\le\varepsilon$ and $\norm{\hat{B}-B}\le\varepsilon$, where $\varepsilon\in\R_{>0}$. In this subsection, we aim to provide an upper bound on $\hat{J}-J_{\star}$ that scales linearly with $\epsilon$, where $J_{\star}$ and $\hat{J}$ are given by Eqs.~\eqref{eqn:opt J} and \eqref{eqn:J hat}, respectively. 
\begin{lemma}
\label{lemma:upper bound on zeta_tilde}
Suppose Assumptions~\ref{ass:pairs} and \ref{ass:cost matrices} hold, and $\norm{\hat{A}-A}\le\varepsilon$ and $\norm{\hat{B}-B}\le\varepsilon$, where $\varepsilon\in\R_{>0}$ satisfies \eqref{eqn:upper bound on epsilon 2}. Then, for any $s\in\U$,
\begin{equation}
\label{eqn:upper bound on cov zeta_tilde}
\lim_{t\to\infty}\E\Big[\tilde{\zeta}_s(t)\tilde{\zeta}_s(t)^{\top}\Big]\preceq\frac{4p\sigma_w^2\tilde{\Gamma}^{4D_{\max}}\kappa^2}{1-\gamma^2}I,
\end{equation}
where $p=|\V|$, $\kappa$ and $\gamma$ are defined in \eqref{eqn:kappa and gamma}, $\tilde{\Gamma}$ is defined in~\eqref{eqn:Gamma}, and $D_{\max}$ is defined in Eq.~\eqref{eqn:depth of T_i}.
\end{lemma}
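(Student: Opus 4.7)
The plan is to unroll the linear recursion \eqref{eqn:dynamics of zeta_tilde} along the directed paths of the forest $\CP(\U,\CH)$, express $\tilde{\zeta}_s(t)$ as a linear combination of independent past noise terms, and bound its covariance directly. First I collect the per-step ingredients. Because $\varepsilon$ satisfies \eqref{eqn:upper bound on epsilon 2}, Lemmas~\ref{lemma:upper bounds on P_r hat and K_r hat self loop} and \ref{lemma:upper bounds on P_r and K_r general} give $\|\hat{K}_r-K_r\|\le 1$ for every $r\in\U$, hence $\|\hat{K}_r\|\le\tilde{\Gamma}$, and for every root $s\in\CR$ one has $\|(A_{ss}+B_{ss}\hat{K}_s)^k\|\le\kappa((1+\gamma)/2)^k$ for all $k\ge 0$. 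A one-line arithmetic check then yields the per-edge bound $\|A_{sr}+B_{sr}\hat{K}_r\|\le \Gamma+\Gamma\tilde{\Gamma}=\Gamma^2+2\Gamma\le (\Gamma+1)^2=\tilde{\Gamma}^2$ for every $(r,s)\in\CH$. Moreover, Lemma~\ref{lemma:zeta_s1 and zeta_s2 indept} applies verbatim to $\tilde{\zeta}$ since \eqref{eqn:dynamics of zeta_tilde} has the same structural form as \eqref{eqn:dynamics of zeta}, so $\tilde{\zeta}_{s_1}(t)$ and $\tilde{\zeta}_{s_2}(t)$ are independent for $s_1\neq s_2$, and $\tilde{\zeta}_s(t)$ is a function of $\{w_i(\tau):i\in\V,\tau<t\}$ only.

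Next I unroll the recursion and bound the covariance. For any leaf $v\in\CL_s$, let $v=v_0\to v_1\to\cdots\to v_l=s$ be the unique path to $s$ of length $l=l_{vs}\le D_{\max}$, and let $w_{i_v}\to v$ be the corresponding driving noise. Setting $M_{v,s}=\prod_{j=l-1}^{0}(A_{v_{j+1}v_j}+B_{v_{j+1}v_j}\hat{K}_{v_j})$ (with the empty product equal to the identity) and invoking the analogue of Observation~\ref{ob:simplify zeta_hat} for $\tilde{\zeta}$ to eliminate spurious noise injections at non-leaf internal nodes, a direct induction on $t$ shows
\[
\tilde{\zeta}_s(t) \;=\; \sum_{v\in\CL_s}\sum_{k=0}^{t-l_{vs}-1}(A_{ss}+B_{ss}\hat{K}_s)^{k}\,M_{v,s}\,I_{v,\{i_v\}}\,w_{i_v}(t-1-l_{vs}-k)
\]
when $s$ has a self-loop, with only the $k=0$ summand retained when $s$ has no self-loop. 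Taking outer products and using the independence of the $w_{i_v}(t')$ across distinct $(v,t')$, the covariance decomposes as a sum of PSD terms $(A_{ss}+B_{ss}\hat{K}_s)^k M_{v,s}\sigma_w^2 I_{v,\{i_v\}}I_{\{i_v\},v}M_{v,s}^{\top}((A_{ss}+B_{ss}\hat{K}_s)^k)^{\top}$. Combining $\|I_{v,\{i_v\}}\|\le 1$, $\|M_{v,s}\|^2\le\tilde{\Gamma}^{4l_{vs}}\le\tilde{\Gamma}^{4D_{\max}}$, the elementary identity $1-((1+\gamma)/2)^2=(1-\gamma)(3+\gamma)/4\ge(1-\gamma^2)/2$, and the geometric sum $\sum_{k\ge 0}\|(A_{ss}+B_{ss}\hat{K}_s)^k\|^2\le 2\kappa^2/(1-\gamma^2)$ when $s\in\CR$, the per-leaf contribution is $\preceq \frac{2\sigma_w^2\kappa^2\tilde{\Gamma}^{4D_{\max}}}{1-\gamma^2}I$. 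Summing over $|\CL_s|\le p$ leaves and letting $t\to\infty$ establishes \eqref{eqn:upper bound on cov zeta_tilde}; the factor $4$ in the numerator absorbs the constant $2$ and also covers the case $s\not\in\CR$, in which only the $k=0$ term contributes and the claim holds a fortiori since $\kappa^2/(1-\gamma^2)\ge 1$.

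The main obstacle is the structural unrolling itself: one must carefully apply the analogue of Observation~\ref{ob:simplify zeta_hat} to guarantee that noise injection occurs only at leaves, so that each $w_{i_v}(t')$ enters $\tilde{\zeta}_s(t)$ through exactly one combinatorial path (up to self-loop iterations at the root $s$), and then invoke Lemma~\ref{lemma:zeta_s1 and zeta_s2 indept} so that cross terms from distinct leaves vanish in expectation. Once this decomposition is in place, the remaining steps are routine matrix-norm and geometric-series arithmetic.
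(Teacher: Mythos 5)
Your proposal is correct and follows essentially the same route as the paper's proof: both rewrite \eqref{eqn:dynamics of zeta_tilde} so that noise enters only at the leaves of $\CP(\U,\CH)$, propagate it along the unique paths via products of $(A_{sr}+B_{sr}\hat{K}_r)$ bounded by $\tilde{\Gamma}^2$ per edge, use the independence of the driving noises to drop cross terms, and close with the geometric series $\norm{(A_{ss}+B_{ss}\hat{K}_s)^k}\le\kappa(\tfrac{\gamma+1}{2})^k$ from Lemma~\ref{lemma:upper bounds on P_r hat and K_r hat self loop}. The only difference is cosmetic (you unroll in one shot whereas the paper first aggregates the leaf noises into an input term $\eta_s(t)$ and then unrolls the root self-loop), and your constants are in fact slightly tighter than the stated bound.
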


For our analysis in the sequel, we further define $\tilde{P}_r$ recursively, for all $r\in\U$, as
\begin{equation}
\label{eqn:set of DARES P tilde}
\tilde{P}_r =Q_{rr}+\hat{K}_r^{\top}R_{rr}\hat{K}_r+(A_{sr}+B_{sr}\hat{K}_r)^{\top}\tilde{P}_s(A_{sr}+B_{sr}\hat{K}_r),
\end{equation}
where $\hat{K}_r$ is given by Eq.~\eqref{eqn:set of DARES K hat}, and $s\in\U$ is the unique node such that $r\rightarrow s$. We then have the following result, which gives an upper bound on $\tilde{J}-J_{\star}$.
\begin{proposition}
\label{prop:J_tilde minus J}
Suppose Assumption~\ref{ass:pairs} and \ref{ass:cost matrices} hold, and $\norm{\hat{A}-A}\le\varepsilon$ and $\norm{\hat{B}-B}\le\varepsilon$, where $\varepsilon\in\R_{>0}$ satisfies \eqref{eqn:upper bound on epsilon 2}. It holds that
\begin{equation}
\label{eqn:exp for J_tilde}
\tilde{J}=\sigma_w^2\sum_{\substack{i\in\V\\ w_i\rightarrow s}}\tr\big(I_{\{i\},s}\tilde{P}_sI_{s,\{i\}}\big),
\end{equation}
where $\tilde{J}$ is defined in Eq.~\eqref{eqn:J tilde}. Moreover, consider the optimal cost $J_{\star}$ given by Eq.~\eqref{eqn:opt J}. For any $\varphi\in\R_{>0}$,
\begin{equation*}
\label{eqn:upper bound on J_tilde minus J}
\tilde{J}-J_{\star}\le\frac{72\kappa^4\sigma_w^2npq}{(1-\gamma^2)^2}\tilde{\Gamma}^{4D_{\max}+8}(\Gamma^3+\sigma_1(R))(1+\sigma_1(R^{-1}))(20\tilde{\Gamma}^9\sigma_1(R))^{D_{\max}}\varepsilon+\varphi,
\end{equation*}
where $\kappa$ and $\gamma$ are defined in \eqref{eqn:kappa and gamma}, $p=|\V|$ and $q=|\U|$, $D_{\max}$ is defined in Eq.~\eqref{eqn:depth of T_i}, and $\Gamma$ and $\tilde{\Gamma}$ are defined in~\eqref{eqn:Gamma}.
\end{proposition}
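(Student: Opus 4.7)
The plan is to first establish the closed-form expression \eqref{eqn:exp for J_tilde} for $\tilde{J}$ via a Lyapunov-type averaging argument that rides on the per-node recursion \eqref{eqn:set of DARES P tilde}, and then derive a performance-difference identity expressing $\tilde{J}-J_{\star}$ as a single sum of trace terms that are quadratic in $\hat{K}_r-K_r$ but can be linearized via $\|\hat{K}_r-K_r\|\le 1$. The bounds in Section~\ref{sec:perturbation bounds} together with the covariance bound in Lemma~\ref{lemma:upper bound on zeta_tilde} then close out the argument.

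\textbf{Formula for $\tilde{J}$.} Set $V(t)=\sum_{s\in\U}\E[\tilde{\zeta}_s(t)^{\top}\tilde{P}_s\tilde{\zeta}_s(t)]$. Using the dynamics \eqref{eqn:dynamics of zeta_tilde}, the independence of $w_i(t)$ from the $\tilde{\zeta}_{\cdot}(t)$'s (which depend only on past noise), and cross-term annihilation from Lemma~\ref{lemma:zeta_s1 and zeta_s2 indept}(a),(c), I obtain
\begin{equation*}
V(t+1)=\sum_{r\in\U}\E\bigl[\tilde{\zeta}_r(t)^{\top}(A_{sr}+B_{sr}\hat{K}_r)^{\top}\tilde{P}_s(A_{sr}+B_{sr}\hat{K}_r)\tilde{\zeta}_r(t)\bigr]+\sigma_w^2\sum_{s,\,w_i\rightarrow s}\tr\bigl(I_{\{i\},s}\tilde{P}_sI_{s,\{i\}}\bigr),
\end{equation*}
where $s\in\U$ is the unique parent of $r$ (Lemma~\ref{lemma:properties of info graph}(i)). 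Substituting the recursion \eqref{eqn:set of DARES P tilde} collapses the bracket to $\tilde{P}_r-Q_{rr}-\hat{K}_r^{\top}R_{rr}\hat{K}_r$, giving $V(t+1)=V(t)-c_{\tilde{u}}(t)+\sigma_w^2\sum_{s,w_i\rightarrow s}\tr(I_{\{i\},s}\tilde{P}_sI_{s,\{i\}})$ with $c_{\tilde{u}}(t)$ the stage cost of $\tilde{u}$ at time $t$. Averaging over $t$, using $V(0)=0$ together with $V(T)/T\to 0$ (from Lemma~\ref{lemma:upper bound on zeta_tilde} and the $\tilde{P}_s$-norm bound implied by Lemma~\ref{lemma:upper bounds on P_r and K_r general}), yields \eqref{eqn:exp for J_tilde} and simultaneously shows that the limit in \eqref{eqn:J tilde} exists.

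\textbf{Performance-difference identity.} Direct expansion using the Ricatti optimality condition \eqref{eqn:set of DARES K} gives the algebraic identity
\begin{equation*}
Q_{rr}+\hat{K}_r^{\top}R_{rr}\hat{K}_r+(A_{sr}+B_{sr}\hat{K}_r)^{\top}P_s(A_{sr}+B_{sr}\hat{K}_r)=P_r+(\hat{K}_r-K_r)^{\top}(R_{rr}+B_{sr}^{\top}P_sB_{sr})(\hat{K}_r-K_r).
\end{equation*}
Running the same Lyapunov argument but with $P_s$ in place of $\tilde{P}_s$ (i.e., with $V'(t)=\sum_s\E[\tilde{\zeta}_s(t)^{\top}P_s\tilde{\zeta}_s(t)]$) and applying this identity in place of \eqref{eqn:set of DARES P tilde}, the constant noise-driven piece becomes $\sigma_w^2\sum_{s,w_i\rightarrow s}\tr(I_{\{i\},s}P_sI_{s,\{i\}})=J_{\star}$ by \eqref{eqn:opt J}, and the mismatch term survives, yielding
\begin{equation*}
\tilde{J}-J_{\star}=\sum_{r\in\U}\tr\Bigl((\hat{K}_r-K_r)^{\top}(R_{rr}+B_{sr}^{\top}P_sB_{sr})(\hat{K}_r-K_r)\,\Sigma_r\Bigr),
\end{equation*}
where $\Sigma_r\triangleq\lim_{t\to\infty}\E[\tilde{\zeta}_r(t)\tilde{\zeta}_r(t)^{\top}]$. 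The passage from the Ces\`aro time-average to the steady-state $\Sigma_r$ is precisely what forces the $+\varphi$ slack in the statement.

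\textbf{Closing the bound.} For each summand, $\tr(XY)\le\|X\|\tr(Y)$ for PSD $X,Y$ gives $\tr((\hat{K}_r-K_r)^{\top}M_r(\hat{K}_r-K_r)\Sigma_r)\le n\,\|\Sigma_r\|\,\|M_r\|\,\|\hat{K}_r-K_r\|^2$ with $M_r=R_{rr}+B_{sr}^{\top}P_sB_{sr}$, after which the crucial observation $\|\hat{K}_r-K_r\|^2\le\|\hat{K}_r-K_r\|$ from Lemmas~\ref{lemma:upper bounds on P_r hat and K_r hat self loop}-\ref{lemma:upper bounds on P_r and K_r general} linearizes the dependence on $\varepsilon$. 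Inserting $\|M_r\|\le\sigma_1(R)+\Gamma^3$, $\|\Sigma_r\|\le\frac{4p\sigma_w^2\tilde{\Gamma}^{4D_{\max}}\kappa^2}{1-\gamma^2}$ from Lemma~\ref{lemma:upper bound on zeta_tilde}, and $\|\hat{K}_r-K_r\|\le\frac{18\kappa^2}{1-\gamma^2}\tilde{\Gamma}^8(1+\sigma_1(R^{-1}))(20\tilde{\Gamma}^9\sigma_1(R))^{D_{\max}}\varepsilon$ from Lemma~\ref{lemma:upper bounds on P_r and K_r general} (absorbing $l_{rs}-1\le D_{\max}$), then summing over the $q=|\U|$ nodes, recovers the stated bound. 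The main technical obstacle will be the bookkeeping in the Lyapunov step: one must use partial nestedness (Lemma~\ref{lemma:zeta_s1 and zeta_s2 indept}) to kill all off-diagonal cross-terms, and cleanly decouple the telescoping piece from the constant noise-driven piece in both the $\tilde{J}$ formula and the performance-difference identity.
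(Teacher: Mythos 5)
Your proposal is correct, and the closing estimates (the covariance bound from Lemma~\ref{lemma:upper bound on zeta_tilde}, $\norm{R_{rr}+B_{sr}^{\top}P_sB_{sr}}\le\Gamma^3+\sigma_1(R)$, and the linearization $\norm{\hat{K}_r-K_r}^2\le\norm{\hat{K}_r-K_r}$ via Lemmas~\ref{lemma:upper bounds on P_r hat and K_r hat self loop}--\ref{lemma:upper bounds on P_r and K_r general}) coincide with the paper's. The route to the performance-difference identity, however, is genuinely different. The paper introduces a truncated cost-to-go $J_T(\tilde{x}(t))$ over an inflated horizon $T'=T+\lceil\varphi T/J_{\star}\rceil$, runs a telescoping argument with a \emph{time-varying} backwards recursion $P_r(t)$ initialized at $P_r(T')=0$, and only recovers the stationary $P_r$ in the limit $P_r(t)\to P_r$; the $+\varphi$ slack is an artifact of bounding $\lim_T\frac{1}{T}J_T(\tilde{x}(0))\le J_{\star}+\varphi$. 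You instead apply the completion-of-squares identity directly to the stationary $P_r$ inside a Lyapunov function $V'(t)=\sum_s\E[\tilde{\zeta}_s(t)^{\top}P_s\tilde{\zeta}_s(t)]$, which telescopes exactly (since $V'(T)/T\to0$ by the uniform moment bound) and yields $\tilde{J}-J_{\star}=\sum_r\tr\bigl((\hat{K}_r-K_r)^{\top}(R_{rr}+B_{sr}^{\top}P_sB_{sr})(\hat{K}_r-K_r)\Sigma_r\bigr)$ as an \emph{equality}. This is cleaner and in fact slightly stronger: your attribution of the $+\varphi$ to the Ces\`aro-to-steady-state passage is the one inaccuracy, since the Ces\`aro mean of the convergent sequence $\E[\tilde{\zeta}_r(t)\tilde{\zeta}_r(t)^{\top}]$ converges to $\Sigma_r$ exactly, so no slack is needed anywhere in your argument and the $+\varphi$ can simply be added for free. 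The cross-term bookkeeping you flag as the main obstacle is handled exactly as you describe, via Lemma~\ref{lemma:zeta_s1 and zeta_s2 indept} and the zero-mean, mutually independent noise.
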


Next, we aim to provide an upper bound on $\hat{J}-\tilde{J}$. We first prove the following result.
\begin{lemma}
\label{lemma:upper bound on state norms}
Suppose Assumptions~\ref{ass:pairs} and \ref{ass:cost matrices} hold, and $\norm{\hat{A}-A}\le\varepsilon$ and $\norm{\hat{B}-B}\le\varepsilon$, where $\varepsilon$ satisfies \eqref{eqn:upper bound on epsilon 2}. Then, for any $s\in\U$ and for any $t\in\Z_{\ge0}$,
\begin{equation}
\label{eqn:upper bound on norm of zeta_tilde}
\E\Big[\norm{\tilde{\zeta}_s(t)}^2\Big]\le\frac{4np\sigma_w^2\tilde{\Gamma}^{4D_{\max}}\kappa^2}{1-\gamma^2},
\end{equation}
where $\tilde{\zeta}_s(t)$ is given in Eq.~\eqref{eqn:dynamics of zeta_tilde}, $p=|\V|$, $\kappa$ and $\gamma$ are defined in \eqref{eqn:kappa and gamma}, $\tilde{\Gamma}$ is defined in~\eqref{eqn:Gamma}, and $D_{\max}$ is defined in Eq.~\eqref{eqn:depth of T_i}. Moreover, for any $t\in\Z_{\ge0}$,
\begin{equation}
\label{eqn:upper bound on norm of x_tilde}
\E\Big[\norm{\tilde{x}(t)}^2\Big]\le\frac{4npq^2\sigma_w^2\tilde{\Gamma}^{4D_{\max}}\kappa^2}{1-\gamma^2},
\end{equation}
and
\begin{equation}
\label{eqn:upper bound on norm of u_tilde}
\E\Big[\norm{\tilde{u}(t)}^2\Big]\le\frac{4npq^2\sigma_w^2\tilde{\Gamma}^{4D_{\max}+2}\kappa^2}{1-\gamma^2},
\end{equation}
where $\tilde{x}(t)$ and $\tilde{u}(t)$ are given by Eqs.~\eqref{eqn:state x tilde} and \eqref{eqn:u tilde}, respectively, and $q=|\U|$.
\end{lemma}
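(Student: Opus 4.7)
The plan is to derive the three bounds from Lemma~\ref{lemma:upper bound on zeta_tilde} (which gives an \emph{asymptotic} bound on $\E[\tilde{\zeta}_s(t)\tilde{\zeta}_s(t)^{\top}]$), the decomposition $\tilde{x}(t)=\sum_{s\in\U}I_{\V,s}\tilde{\zeta}_s(t)$ from Lemma~\ref{lemma:zeta_s1 and zeta_s2 indept}(b), and a spectral bound on $\hat{K}_s$ obtained from the perturbation analysis of Section~\ref{sec:perturbation bounds}.

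The first task is to promote the limiting bound of Lemma~\ref{lemma:upper bound on zeta_tilde} to a bound that holds for every $t\in\Z_{\ge 0}$, which then yields \eqref{eqn:upper bound on norm of zeta_tilde}. Unrolling the LTI recursion \eqref{eqn:dynamics of zeta_tilde} from the zero initial condition $\tilde{\zeta}_s(0)=0$, $\tilde{\zeta}_s(t)$ is a linear combination of the past disturbances $\{w_i(k):0\le k\le t-1\}$, with coefficient matrices that depend only on the time gap $t-k$ by time-invariance of Eq.~\eqref{eqn:dynamics of zeta_tilde}. Since the $w_i(k)$ are zero-mean, mutually independent, and each has covariance $\sigma_w^2 I$, the covariance $\E[\tilde{\zeta}_s(t)\tilde{\zeta}_s(t)^{\top}]$ equals a sum over $k$ of positive semidefinite matrices and is therefore monotonically non-decreasing in $t$ in the positive semidefinite order. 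Lemma~\ref{lemma:upper bound on zeta_tilde} bounds the limit, so each finite-$t$ covariance inherits the same upper bound. Taking the trace and using that the dimension of $\tilde{\zeta}_s(t)$ is at most $n$ yields \eqref{eqn:upper bound on norm of zeta_tilde}.

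For \eqref{eqn:upper bound on norm of x_tilde}, Lemma~\ref{lemma:zeta_s1 and zeta_s2 indept}(b) gives $\tilde{x}(t)=\sum_{s\in\U}I_{\V,s}\tilde{\zeta}_s(t)$. Applying the Cauchy--Schwarz-type inequality $\norm{\sum_{s\in\U} v_s}^2\le q\sum_{s\in\U}\norm{v_s}^2$ (with $q=|\U|$) together with $\norm{I_{\V,s}}\le 1$, I obtain $\norm{\tilde{x}(t)}^2\le q\sum_{s\in\U}\norm{\tilde{\zeta}_s(t)}^2$. Taking expectation and invoking \eqref{eqn:upper bound on norm of zeta_tilde} for each of the at most $q$ summands then yields \eqref{eqn:upper bound on norm of x_tilde}. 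The argument for \eqref{eqn:upper bound on norm of u_tilde} is identical, except that $\tilde{u}(t)=\sum_{s\in\U}I_{\V,s}\hat{K}_s\tilde{\zeta}_s(t)$ contributes an additional factor of $\norm{\hat{K}_s}^2$ inside the sum. From Lemmas~\ref{lemma:upper bounds on P_r hat and K_r hat self loop} and \ref{lemma:upper bounds on P_r and K_r general}, $\norm{\hat{K}_s-K_s}\le 1$ for every $s\in\U$, so by the triangle inequality and the definition \eqref{eqn:Gamma} of $\tilde{\Gamma}=\Gamma+1$, I have $\norm{\hat{K}_s}\le\tilde{\Gamma}$. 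This accounts for the extra $\tilde{\Gamma}^2$ factor in the stated bound.

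The main obstacle is the first step: Lemma~\ref{lemma:upper bound on zeta_tilde} gives a bound only in the limit, whereas \eqref{eqn:upper bound on norm of zeta_tilde} must hold uniformly for every $t\ge 0$. The monotone-sum viewpoint handles this cleanly; an alternative would be to redo the spectral estimates in the proof of Lemma~\ref{lemma:upper bound on zeta_tilde} with the sum truncated at time $t$, but this duplicates work. The remaining arguments are straightforward $\ell_2$ bookkeeping based on the decomposition of $\tilde{x}(t)$ and $\tilde{u}(t)$ over the nodes of the information graph.
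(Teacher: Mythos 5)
Your proposal is correct, and for the second and third bounds it coincides with the paper's proof: the paper also writes $\tilde{x}(t)=\sum_{s\in\U}I_{\V,s}\tilde{\zeta}_s(t)$, applies a Cauchy--Schwarz inequality over the $q=|\U|$ summands (Lemma~\ref{lemma:CS inequallity}, which yields the same $q^2$ factor as your $\norm{\sum_s v_s}^2\le q\sum_s\norm{v_s}^2$), and absorbs the extra $\tilde{\Gamma}^2$ for $\tilde{u}(t)$ via $\norm{\hat{K}_s}\le\norm{K_s}+\norm{\hat{K}_s-K_s}\le\Gamma+1=\tilde{\Gamma}$ from Lemmas~\ref{lemma:upper bounds on P_r hat and K_r hat self loop}--\ref{lemma:upper bounds on P_r and K_r general}. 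Where you genuinely diverge is the first inequality: the paper obtains the finite-$t$ covariance bound by repeating the computation of Lemma~\ref{lemma:upper bound on zeta_tilde} with the geometric series truncated at $t$ (i.e., $\E[\tilde{\zeta}_s(t)\tilde{\zeta}_s(t)^{\top}]\preceq\sigma_w^2p\kappa^2\tilde{\Gamma}^{4D_{\max}}\sum_{k=0}^{t}(\tfrac{\gamma+1}{2})^{2k}I$), which is exactly the ``duplicated work'' you chose to avoid. Your alternative --- observing that, from the zero initial condition, $\tilde{\zeta}_s(t)$ is a linear combination of the mutually independent $w_i(k)$ with coefficients depending only on the gap $t-k$, so that $\E[\tilde{\zeta}_s(t)\tilde{\zeta}_s(t)^{\top}]$ is a growing sum of positive semidefinite terms and hence non-decreasing in $t$ in the semidefinite order, bounded by its limit from Lemma~\ref{lemma:upper bound on zeta_tilde} --- is valid and a bit cleaner; the one point worth stating explicitly if you write it out is that monotonicity needs both the time-invariance of the coefficients and the independence of the disturbances (so that the time-$t$ sum is, after reindexing by gap, a sub-sum of the time-$(t+1)$ sum), not merely positive semidefiniteness of the summands. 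Both routes give the identical constant, since the paper's truncated series is also dominated by its limit.
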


For notational simplicity in the sequel, let us denote
\begin{equation}
\label{eqn:aux parameters}
\begin{split}
\zeta_b&=\sqrt{\frac{4np\sigma_w^2\tilde{\Gamma}^{4D_{\max}}\kappa^2}{1-\gamma^2}},\\
\bar{\varepsilon}&=\frac{(1-\gamma)^3}{768\kappa^4pq}(\tilde{\Gamma}+1)^{-2}\tilde{\Gamma}^{-9}(1+\sigma_1(R^{-1}))^{-2}(20(\tilde{\Gamma}+1)^2\tilde{\Gamma}^7\sigma_1(R))^{-D_{\max}}.
\end{split}
\end{equation}
We then have the following results.
\begin{lemma}
\label{lemma:u_hat minus u_tilde and x_hat minus x_tilde}
Suppose Assumptions~\ref{ass:pairs}-\ref{ass:cost matrices} hold, and $\norm{\hat{A}-A}\le\bar{\varepsilon}$ and $\norm{\hat{B}-B}\le\bar{\varepsilon}$, where $\bar{\varepsilon}$ is defined in \eqref{eqn:aux parameters}. Then, for all $t\in\Z_{\ge0}$,
\begin{equation}
\label{eqn:u_hat minue u_tilde}
\E\Big[\norm{\hat{u}(t)-\tilde{u}(t)}^2\Big]\le\bigg(\frac{58\kappa^2(\tilde{\Gamma}+1)^{2D_{\max}+3}p^2q^2}{(1-\gamma)^2}\zeta_b\bar{\varepsilon}\bigg)^2,
\end{equation}
and 
\begin{equation}
\label{eqn:x_hat minus x_tilde}
\E\Big[\norm{\hat{x}(t)-\tilde{x}(t)}^2\Big]\le\bigg(\frac{58\kappa^3\Gamma(\tilde{\Gamma}+1)^{2D_{\max}+3}p^2q^2}{(1-\gamma)^3}\zeta_b\bar{\varepsilon}\bigg)^2,
\end{equation}
where $\hat{u}(t)$ (resp., $\tilde{u}(t)$) is given by Eq.~\eqref{eqn:control policy} (resp., Eq.~\eqref{eqn:u tilde}), $\hat{x}(t)$ (resp., $\tilde{x}(t)$) is given by Eq.~\eqref{eqn:state x hat} (resp., Eq.~\eqref{eqn:state x tilde}), $\Gamma$ and $\tilde{\Gamma}$ are defined in~\eqref{eqn:Gamma}, $\kappa$ and $\gamma$ are defined in \eqref{eqn:kappa and gamma}, $p=|\V|$ and $q=|\U|$, $D_{\max}$ is defined in Eq.~\eqref{eqn:depth of T_i}, and $\zeta_b$ is defined in \eqref{eqn:aux parameters}.
\end{lemma}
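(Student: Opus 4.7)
The plan is to analyze the two bounds simultaneously via an error-propagation argument on the forest $\CP(\U,\CH)$. Define $\delta_s(t) := \hat{\zeta}_s(t) - \tilde{\zeta}_s(t)$, $\delta_u(t) := \hat{u}(t) - \tilde{u}(t)$, and $\delta_x(t) := \hat{x}(t) - \tilde{x}(t)$. Since $\delta_u(t) = \sum_{s\in\U} I_{\V,s}\hat{K}_s \delta_s(t)$, the core task is to control each $\delta_s(t)$ in mean square. Subtracting Eq.~\eqref{eqn:dynamics of zeta_tilde} from Eq.~\eqref{eqn:dynamics of zeta hat} and regrouping yields the recursion
\begin{equation*}
\delta_s(t+1) = \sum_{r\to s}(\hat{A}_{sr}+\hat{B}_{sr}\hat{K}_r)\delta_r(t) + \sum_{r\to s}\Delta_{sr}\tilde{\zeta}_r(t) + \sum_{w_i\to s}I_{s,\{i\}}(\hat{w}_i(t)-w_i(t)),
\end{equation*}
where $\Delta_{sr} := (\hat{A}_{sr}-A_{sr}) + (\hat{B}_{sr}-B_{sr})\hat{K}_r$. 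By Lemmas~\ref{lemma:upper bounds on P_r hat and K_r hat self loop}--\ref{lemma:upper bounds on P_r and K_r general} we have $\|\hat{K}_r\|\le\tilde{\Gamma}$, hence $\|\Delta_{sr}\|=O(\tilde{\Gamma}\bar{\varepsilon})$. Expanding $\hat{x}_j(t+1)$ in Eq.~\eqref{eqn:est w_i(t)} with the true dynamics $\hat{x}_j(t+1)=A_j\hat{x}_{\CN_j}(t)+B_j\hat{u}_{\CN_j}(t)+w_j(t)$ gives the exact identity
\begin{equation*}
\hat{w}_j(t) - w_j(t) = (A_j-\hat{A}_j)\hat{x}_{\CN_j}(t) + (B_j-\hat{B}_j)\hat{u}_{\CN_j}(t),
\end{equation*}
whose norm is at most $\bar{\varepsilon}(\|\tilde{x}(t)\|+\|\delta_x(t)\|+\|\tilde{u}(t)\|+\|\delta_u(t)\|)$. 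Lemma~\ref{lemma:upper bound on state norms} already controls the $\tilde{x},\tilde{u},\tilde{\zeta}_r$ contributions in mean square, producing a driving term of order $\zeta_b\bar{\varepsilon}$.

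Next, I would propagate these bounds along the forest $\CP(\U,\CH)$ from leaves toward the root. For any non-root node $r$, the unique directed path to its root has length at most $D_{\max}$, and each edge contributes a transition matrix of norm at most $\|\hat{A}_{sr}\|+\|\hat{B}_{sr}\|\|\hat{K}_r\|\le(\tilde{\Gamma}+1)^2$, while the number of leaf contributions feeding a given node is at most $p$. At a root node $s$ with a self-loop the recursion specializes to $\delta_s(t+1)=(\hat{A}_{ss}+\hat{B}_{ss}\hat{K}_s)\delta_s(t)+(\text{aggregated driving noise})$. Stability of the perturbed closed-loop matrix comes from combining Eq.~\eqref{eqn:K_r hat stabilizable}, $\|(A_{ss}+B_{ss}\hat{K}_s)^k\|\le\kappa(\tfrac{\gamma+1}{2})^k$, with \cite[Lemma~12]{mania2019certainty} applied to the perturbation $\|(\hat{A}_{ss}+\hat{B}_{ss}\hat{K}_s)-(A_{ss}+B_{ss}\hat{K}_s)\|=O(\tilde{\Gamma}\bar{\varepsilon})$. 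The explicit smallness of $\bar{\varepsilon}$ in \eqref{eqn:aux parameters} is calibrated precisely so that the perturbed matrix still satisfies $\|(\hat{A}_{ss}+\hat{B}_{ss}\hat{K}_s)^k\|\le\kappa(\gamma'')^k$ for some $\gamma''<1$ with $(1-\gamma'')^{-1}=O((1-\gamma)^{-1})$, whence summing the resulting geometric series gives a uniform-in-$t$ bound on $\sqrt{\E\|\delta_s(t)\|^2}$ of the form $\zeta_b\bar{\varepsilon}$ times a combinatorial prefactor of order $\kappa pq(\tilde{\Gamma}+1)^{2D_{\max}}/(1-\gamma)$.

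The main obstacle, and the reason the bound cannot be established by a single sweep over the forest, is that the driving noise $\hat{w}-w$ itself contains $\delta_x$ and $\delta_u$, producing a feedback loop. I would resolve this by a bootstrap: posit tentative bounds $\sqrt{\E\|\delta_u(t)\|^2},\sqrt{\E\|\delta_x(t)\|^2}\le\Theta$ uniformly in $t$, feed them into the propagation analysis above to obtain a refined bound of the form $C_1\zeta_b\bar{\varepsilon}+C_2\bar{\varepsilon}\,\Theta$ on $\sqrt{\E\|\delta_u(t)\|^2}$. The dynamics $\delta_x(t+1)=A\delta_x(t)+B\delta_u(t)$, together with $\|A^k\|\le\kappa\gamma^k$ from Assumption~\ref{ass:stable A}, yield $\sqrt{\E\|\delta_x(t)\|^2}\le\tfrac{\kappa\Gamma}{1-\gamma}\sup_{k\le t}\sqrt{\E\|\delta_u(k)\|^2}$, supplying the extra factor $\kappa\Gamma/(1-\gamma)$ that distinguishes \eqref{eqn:x_hat minus x_tilde} from \eqref{eqn:u_hat minue u_tilde}. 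Because $\bar{\varepsilon}$ in \eqref{eqn:aux parameters} is chosen so that the self-feedback coefficient $C_2\bar{\varepsilon}\cdot\kappa\Gamma/(1-\gamma)<\tfrac12$, the bootstrap contracts and closes at $\Theta=2C_1\zeta_b\bar{\varepsilon}$; tracking the constants along the way yields precisely the prefactors $58\kappa^2(\tilde{\Gamma}+1)^{2D_{\max}+3}p^2q^2/(1-\gamma)^2$ and $58\kappa^3\Gamma(\tilde{\Gamma}+1)^{2D_{\max}+3}p^2q^2/(1-\gamma)^3$ claimed in the statement.
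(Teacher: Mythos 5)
Your proposal follows essentially the same route as the paper's proof: the exact identity $\hat{w}(t)-w(t)=(A-\hat{A})\hat{x}(t)+(B-\hat{B})\hat{u}(t)$, the propagation of the internal-state differences along the information graph via products of perturbed closed-loop maps (the paper's $\hat{H}(v,s)$ versus $H(v,s)$), the matrix-power perturbation bound at the self-loop (root) nodes, and the extra factor $\kappa\Gamma/(1-\gamma)$ converting the input-difference bound into the state-difference bound. The one caveat is that your bootstrap must be closed as an induction on $t$ — exactly as the paper does, seeded by $\hat{u}(0)=\tilde{u}(0)$ and using that $\delta_u(t+1)$ depends only on $\delta_u(k),\delta_x(k)$ for $k\le t$ — since merely verifying that $\Theta=2C_1\zeta_b\bar{\varepsilon}$ is a self-consistent fixed point of the improvement map $\Theta\mapsto C_1\zeta_b\bar{\varepsilon}+C_2\bar{\varepsilon}\Theta$ does not by itself establish the uniform-in-$t$ bound.
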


\begin{corollary}
\label{coro:upper bound on x_hat and u_hat}
Suppose Assumptions~\ref{ass:pairs}-\ref{ass:cost matrices} hold. and $\norm{\hat{A}-A}\le\bar{\varepsilon}$ and $\norm{\hat{B}-B}\le\bar{\varepsilon}$, where $\bar{\varepsilon}$ is defined in \eqref{eqn:aux parameters}. Then, for all $t\in\Z_{\ge0}$,
\begin{equation}
\label{eqn:upper bound on x_hat(t) coro}
\E\Big[\norm{\hat{x}(t)}^2\Big]\le\bigg(\frac{58\kappa^3\Gamma(\tilde{\Gamma}+1)^{2D_{\max}+3}p^2q^2}{(1-\gamma)^3}\zeta_b\bar{\varepsilon}+q\zeta_b\bigg)^2,
\end{equation}
and 
\begin{equation}
\label{eqn:upper bound on u_hat(t) coro}
\E\Big[\norm{\hat{u}(t)}^2\Big]\le\bigg(\frac{58\kappa^2(\tilde{\Gamma}+1)^{2D_{\max}+3}p^2q^2}{(1-\gamma)^2}\zeta_b\bar{\varepsilon}+q\tilde{\Gamma}\zeta_b\bigg)^2,
\end{equation}
where $\hat{u}(t)$ is given by Eq.~\eqref{eqn:control policy}, $\hat{x}(t)$ is given by Eq.~\eqref{eqn:state x hat}, $\Gamma$ and $\tilde{\Gamma}$ are defined in~\eqref{eqn:Gamma}, $\kappa$ and $\gamma$ are defined in \eqref{eqn:kappa and gamma}, $p=|\V|$ and $q=|\U|$, $D_{\max}$ is defined in Eq.~\eqref{eqn:depth of T_i}, and $\zeta_b$ is given by \eqref{eqn:aux parameters}.
\end{corollary}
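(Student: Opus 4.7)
The plan is to derive Corollary 1 directly from Lemma 6 (which bounds $\E[\|\tilde{x}(t)\|^2]$ and $\E[\|\tilde{u}(t)\|^2]$) and Lemma 7 (which bounds $\E[\|\hat{x}(t)-\tilde{x}(t)\|^2]$ and $\E[\|\hat{u}(t)-\tilde{u}(t)\|^2]$), combining them through the Minkowski inequality in $L^2$. The key observation is that both bounds in Corollary 1 are sums of the corresponding bounds from Lemma 6 and Lemma 7 (after taking square roots), so the result should fall out almost immediately after writing $\hat{x}(t)=(\hat{x}(t)-\tilde{x}(t))+\tilde{x}(t)$ and similarly for $\hat{u}(t)$.

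More concretely, I would first write $\hat{x}(t)=\bigl(\hat{x}(t)-\tilde{x}(t)\bigr)+\tilde{x}(t)$ and apply the triangle inequality pointwise in $\omega$, then invoke Minkowski's inequality to get $\sqrt{\E[\|\hat{x}(t)\|^2]}\le\sqrt{\E[\|\hat{x}(t)-\tilde{x}(t)\|^2]}+\sqrt{\E[\|\tilde{x}(t)\|^2]}$. For the first term I would quote Eq.~\eqref{eqn:x_hat minus x_tilde} from Lemma~\ref{lemma:u_hat minus u_tilde and x_hat minus x_tilde}, which requires the hypothesis $\max\{\|\hat{A}-A\|,\|\hat{B}-B\|\}\le\bar{\varepsilon}$, and this is exactly the hypothesis of Corollary~1 (note also that $\bar{\varepsilon}$ satisfies Eq.~\eqref{eqn:upper bound on epsilon 2}, so Lemma~\ref{lemma:upper bound on state norms} applies too). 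For the second term I would use Eq.~\eqref{eqn:upper bound on norm of x_tilde} from Lemma~\ref{lemma:upper bound on state norms}, which gives $\E[\|\tilde{x}(t)\|^2]\le\frac{4npq^2\sigma_w^2\tilde{\Gamma}^{4D_{\max}}\kappa^2}{1-\gamma^2}=q^2\zeta_b^2$, using the definition of $\zeta_b$ in \eqref{eqn:aux parameters}, so that $\sqrt{\E[\|\tilde{x}(t)\|^2]}\le q\zeta_b$. Squaring the sum yields Eq.~\eqref{eqn:upper bound on x_hat(t) coro}.

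For the control input the argument is identical: Minkowski's inequality gives $\sqrt{\E[\|\hat{u}(t)\|^2]}\le\sqrt{\E[\|\hat{u}(t)-\tilde{u}(t)\|^2]}+\sqrt{\E[\|\tilde{u}(t)\|^2]}$; the first term is bounded by $\frac{58\kappa^2(\tilde{\Gamma}+1)^{2D_{\max}+3}p^2q^2}{(1-\gamma)^2}\zeta_b\bar{\varepsilon}$ via Eq.~\eqref{eqn:u_hat minue u_tilde}, and the second is bounded by $\sqrt{\frac{4npq^2\sigma_w^2\tilde{\Gamma}^{4D_{\max}+2}\kappa^2}{1-\gamma^2}}=q\tilde{\Gamma}\zeta_b$ via Eq.~\eqref{eqn:upper bound on norm of u_tilde}. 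Squaring gives Eq.~\eqref{eqn:upper bound on u_hat(t) coro}.

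There is no real obstacle here once Lemmas~\ref{lemma:upper bound on state norms} and \ref{lemma:u_hat minus u_tilde and x_hat minus x_tilde} are in hand. The only thing to double-check is the arithmetic matching between the expressions $\sqrt{\E[\|\tilde{x}(t)\|^2]}$ and $q\zeta_b$ (and similarly $\sqrt{\E[\|\tilde{u}(t)\|^2]}$ and $q\tilde{\Gamma}\zeta_b$), which is immediate from the definition of $\zeta_b$ and the extra $\tilde{\Gamma}^2$ factor present in Eq.~\eqref{eqn:upper bound on norm of u_tilde} but not in Eq.~\eqref{eqn:upper bound on norm of x_tilde}. In short, Corollary~\ref{coro:upper bound on x_hat and u_hat} is a pure triangle-inequality corollary of the two preceding results, and the proof should be essentially a two-line application of Minkowski.
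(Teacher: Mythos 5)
Your proposal is correct and is essentially identical to the paper's own proof: the paper also writes $\hat{x}(t)=(\hat{x}(t)-\tilde{x}(t))+\tilde{x}(t)$, applies the $L^2$ triangle inequality (via its Lemma~\ref{lemma:CS inequallity}, which is just Minkowski), and then invokes Lemmas~\ref{lemma:upper bound on state norms}--\ref{lemma:u_hat minus u_tilde and x_hat minus x_tilde}. Your arithmetic identifications $\sqrt{\E[\|\tilde{x}(t)\|^2]}\le q\zeta_b$ and $\sqrt{\E[\|\tilde{u}(t)\|^2]}\le q\tilde{\Gamma}\zeta_b$ are also exactly right.
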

\begin{proof}
Note that
\begin{align*}
\sqrt{\E\Big[\norm{\hat{x}(t)}^2\Big]}&=\sqrt{\E\Big[\norm{\hat{x}(t)-\tilde{x}(t)+\tilde{x}(t)}^2\Big]}\\
&\le\sqrt{\E\Big[\norm{\hat{x}(t)-\tilde{x}(t)}^2\Big]}+\sqrt{\E\Big[\norm{\tilde{x}(t)}^2\Big]},
\end{align*}
where the inequality follows from Lemma~\ref{lemma:CS inequallity}. The proof of \eqref{eqn:upper bound on x_hat(t) coro} now follows directly from Lemmas~\ref{lemma:upper bound on state norms}-\ref{lemma:u_hat minus u_tilde and x_hat minus x_tilde}. Similarly, we can prove \eqref{eqn:upper bound on u_hat(t) coro}.
\end{proof}

\begin{proposition}
\label{prop:upper bound on J_hat minue J_tilde}
Suppose Assumptions~\ref{ass:pairs}-\ref{ass:cost matrices} hold, and $\norm{\hat{A}-A}\le\bar{\varepsilon}$ and $\norm{\hat{B}-B}\le\bar{\varepsilon}$, where $\bar{\varepsilon}$ is defined in Eq.~\eqref{eqn:aux parameters}. Then, for $\hat{J}$ and $\tilde{J}$ defined in Eqs.~\eqref{eqn:J hat} and \eqref{eqn:J tilde}, respectively,
\begin{equation}
\label{eqn:upper bound on J_hat minus J_tilde}
\hat{J}-\tilde{J}\le 696\frac{\kappa^6\sigma_w^2np^4q^3}{(1-\gamma)^4(1-\gamma^2)}\tilde{\Gamma}^{4D_{\max}+2}(\tilde{\Gamma}+1)^{2D_{\max}+3}(\sigma_1(Q)+\sigma_1(R))\bar{\varepsilon},
\end{equation}
where $\kappa$ and $\gamma$ are defined in \eqref{eqn:kappa and gamma}, $p=|\V|$ and $q=|\U|$, $\tilde{\Gamma}$ is defined in~\eqref{eqn:Gamma}, and $D_{\max}$ is defined in Eq.~\eqref{eqn:depth of T_i}.
\end{proposition}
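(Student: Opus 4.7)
My plan is to bound $\hat{J}-\tilde{J}$ by bounding the per-time-step expected differences $\E[\hat{x}(t)^{\top}Q\hat{x}(t)-\tilde{x}(t)^{\top}Q\tilde{x}(t)]$ and $\E[\hat{u}(t)^{\top}R\hat{u}(t)-\tilde{u}(t)^{\top}R\tilde{u}(t)]$ uniformly in $t$, and then pass the bound through the $\limsup$ in \eqref{eqn:J hat} and the $\lim$ in \eqref{eqn:J tilde}. For the state term, I would use the algebraic identity $a^{\top}Qa-b^{\top}Qb = (a-b)^{\top}Q(a+b)$ and then apply the operator-norm inequality $\|Q\|\le\sigma_1(Q)$ together with two applications of Cauchy--Schwarz (first pointwise, then for expectations) to obtain
\[
\E[\hat{x}(t)^{\top}Q\hat{x}(t)] - \E[\tilde{x}(t)^{\top}Q\tilde{x}(t)] \le \sigma_1(Q)\sqrt{\E\|\hat{x}(t)-\tilde{x}(t)\|^2}\Bigl(\sqrt{\E\|\hat{x}(t)\|^2}+\sqrt{\E\|\tilde{x}(t)\|^2}\Bigr),
\]
and the analogous inequality for the control term with $R$ in place of $Q$ and $\hat{u},\tilde{u}$ in place of $\hat{x},\tilde{x}$.

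Next, I would substitute the closed-form bounds already available: Lemma~\ref{lemma:u_hat minus u_tilde and x_hat minus x_tilde} controls the difference factors $\sqrt{\E\|\hat{x}(t)-\tilde{x}(t)\|^2}$ and $\sqrt{\E\|\hat{u}(t)-\tilde{u}(t)\|^2}$ in terms of $\zeta_b\bar{\varepsilon}$, while Lemma~\ref{lemma:upper bound on state norms} bounds $\sqrt{\E\|\tilde{x}(t)\|^2}$ by $q\zeta_b$ and $\sqrt{\E\|\tilde{u}(t)\|^2}$ by $q\tilde{\Gamma}\zeta_b$, and Corollary~\ref{coro:upper bound on x_hat and u_hat} gives the corresponding bounds for $\hat{x}(t)$ and $\hat{u}(t)$. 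The Corollary bound has an $\bar{\varepsilon}$-dependent summand plus a $q\zeta_b$ (respectively $q\tilde{\Gamma}\zeta_b$) summand; because the outer factor already contributes one power of $\bar{\varepsilon}$, the $\bar{\varepsilon}$-dependent summand in the Corollary produces an $O(\bar{\varepsilon}^2)$ residual, which the definition of $\bar{\varepsilon}$ in \eqref{eqn:aux parameters} makes small enough so that it is dominated by (or of the same order as) the $q\zeta_b$ summand. Thus $\sqrt{\E\|\hat{x}(t)\|^2}+\sqrt{\E\|\tilde{x}(t)\|^2}$ is at most an absolute constant multiple of $q\zeta_b$, and similarly for the control variables with an extra factor $\tilde{\Gamma}$.

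Collecting everything, each per-time-step difference is at most a numerical constant times
\[
(\sigma_1(Q)+\sigma_1(R))\cdot\frac{\kappa^3\tilde{\Gamma}(\tilde{\Gamma}+1)^{2D_{\max}+3}p^2q^3}{(1-\gamma)^3}\,\zeta_b^2\,\bar{\varepsilon},
\]
where I have used $\Gamma\le\tilde{\Gamma}$ and absorbed the extra $\tilde{\Gamma}$ coming from the control bound. Substituting $\zeta_b^2=\frac{4np\sigma_w^2\tilde{\Gamma}^{4D_{\max}}\kappa^2}{1-\gamma^2}$ from \eqref{eqn:aux parameters} and collecting the powers of $\kappa$, $\tilde{\Gamma}$, $(\tilde{\Gamma}+1)$, $p$, $q$, $n$, $\sigma_w^2$, and $(1-\gamma)$ yields exactly the expression appearing on the right-hand side of \eqref{eqn:upper bound on J_hat minus J_tilde}, up to the final numerical constant $696$. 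Because the resulting uniform-in-$t$ bound does not depend on $t$, I can average over $t\in\{0,\dots,T-1\}$ and apply the elementary inequality $\limsup_T(a_T-b_T)\le \limsup_T a_T-\lim_T b_T$ (valid because $\tilde{J}=\lim_T b_T$ exists by Proposition~\ref{prop:J_tilde minus J}) to transfer the bound to $\hat{J}-\tilde{J}$.

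The substantive work is purely bookkeeping of constants: the decomposition and the Cauchy--Schwarz steps are elementary, and the only subtlety is verifying that the $\bar{\varepsilon}$-dependent summand in Corollary~\ref{coro:upper bound on x_hat and u_hat} is indeed subdominant under the prescribed upper bound on $\bar{\varepsilon}$, so that the dependence on $\bar{\varepsilon}$ in the final bound remains linear rather than quadratic. This amounts to checking that the quotient $\frac{58\kappa^3\Gamma(\tilde{\Gamma}+1)^{2D_{\max}+3}p^2q^2}{(1-\gamma)^3}\bar{\varepsilon}$ (and its analogue for the control) is at most a constant like $q$ or $q\tilde{\Gamma}$, which is immediate from the definition of $\bar{\varepsilon}$ in \eqref{eqn:aux parameters}. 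Matching the exact prefactor $696$ and the specific exponents in the statement then follows from careful arithmetic with the numerical constants $58$, $4$, and so on.
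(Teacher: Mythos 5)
Your proposal is correct and follows essentially the same route as the paper's proof: the same per-time-step decomposition of $\hat{J}-\tilde{J}$, the same double application of Cauchy--Schwarz to each quadratic-form difference, the same substitution of the bounds from Lemmas~\ref{lemma:upper bound on state norms}--\ref{lemma:u_hat minus u_tilde and x_hat minus x_tilde} and Corollary~\ref{coro:upper bound on x_hat and u_hat}, and the same key observation that the $\bar{\varepsilon}$-quadratic residual is absorbed using the smallness of $\bar{\varepsilon}$ (the paper carries this out via the inequality $\Lambda^2\bar{\varepsilon}\le\Lambda pq$ with $\Lambda$ as in its Eq.~\eqref{eqn:Lambda}). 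The remaining work is, as you say, arithmetic bookkeeping of the constants.
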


Suppose $\norm{\hat{A}-A}\le\varepsilon$ and $\norm{\hat{B}-B}\le\varepsilon$ with $\varepsilon\in\R_{>0}$. We see from the results in Propositions~\ref{prop:J_tilde minus J}-\ref{prop:upper bound on J_hat minue J_tilde} that $\hat{J}-J_{\star}\le C\varepsilon$, if $\varepsilon\le C_0$, where $C$ and $C_0$ are constants that depend on the problem parameters.

\subsection{Sample Complexity Result}\label{sec:sample complexity}
We are now in place to present the sample complexity result for learning decentralized LQR with the partially nested information structure described in Section~\ref{sec:dist LQR known matrices}.
\begin{theorem}
\label{thm:sample complexity}
Suppose Assumptions~\ref{ass:pairs}-\ref{ass:cost matrices} hold, and Algorithm~\ref{algorithm:least squares} is used to obtain $\hat{A}$ and $\hat{B}$. Moreover, suppose $\norm{A}\le\vartheta$ and $\norm{B}\le\vartheta$, where $\vartheta\in\R_{>0}$, and $D_{\max}\le D$, where $D_{\max}$ is defined in Eq.~\eqref{eqn:depth of T_i} and $D$ is a universal constant. Consider any $\delta>0$. Let the input parameters to Algorithm~\ref{algorithm:least squares} satisfy $N\ge\alpha/\bar{\varepsilon}$ and $\lambda\ge\underline{\sigma}^2/40$, where
\begin{equation*}
z_b=\frac{5\kappa_0}{1-\gamma_0}\overline{\sigma}\sqrt{(\norm{B}^2m+m+n)\log\frac{4N}{\delta}},
\end{equation*}
and 
\begin{equation*}
\alpha=\frac{160}{\underline{\sigma}^2}\bigg(2n\sigma_w^2(n+m)\log\frac{N+z_b^2/\lambda}{\delta}+\lambda n\vartheta^2\bigg),
\end{equation*}
where $\kappa_0$ and $\gamma_0$ are given in Assumption~\ref{ass:stable A}, $\underline{\sigma}=\min\{\sigma_w,\sigma_u\}$, $\overline{\sigma}=\max\{\sigma_w,\sigma_u\}$, and $\bar{\varepsilon}$ is defined in \eqref{eqn:aux parameters}. Then, with probability at least $1-\delta$,
\begin{equation}
\label{eqn:J_hat minus J_star}
\hat{J}-J_{\star}\le C_1\frac{\kappa^6\sigma_w^2np^4q^3}{(1-\gamma^2)^2}\tilde{\Gamma}^{11D+5}(\tilde{\Gamma}+1)^{2D+3}(\Gamma^3+\sigma_1(R)+\sigma_1(Q))\sigma_1(R)^{D}\sqrt{\frac{\alpha}{N}},
\end{equation}
where $\hat{J}$ and $J_{\star}$ are given in Eqs.~\eqref{eqn:J hat} and \eqref{eqn:opt J}, respectively, $C_1$ is a universal constant, $\kappa$ and $\gamma$ are defined in \eqref{eqn:kappa and gamma}, and $\Gamma$ and $\tilde{\Gamma}$ are defined in~\eqref{eqn:Gamma}, $p=|\V|$, and $q=|\U|$.
\end{theorem}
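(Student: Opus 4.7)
The plan is to chain together the three main technical results already proved in the paper: the least squares estimation error bound of Proposition~\ref{prop:upper bound on est error}, the $\tilde J - J_\star$ bound of Proposition~\ref{prop:J_tilde minus J}, and the $\hat J - \tilde J$ bound of Proposition~\ref{prop:upper bound on J_hat minue J_tilde}. The one-line strategy is: apply Proposition~\ref{prop:upper bound on est error} to obtain $\|\hat A - A\|\le\varepsilon_0$ and $\|\hat B-B\|\le\varepsilon_0$ with $\varepsilon_0 = 4\sqrt{\alpha/N}$ on an event of probability at least $1-\delta$; verify that the sample size condition forces $\varepsilon_0 \le \bar\varepsilon$, which activates Propositions~\ref{prop:J_tilde minus J} and \ref{prop:upper bound on J_hat minue J_tilde}; then decompose $\hat J - J_\star = (\tilde J - J_\star) + (\hat J - \tilde J)$ and apply those two propositions.

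First, I would note that with the chosen values of $\lambda$ and $N$, Proposition~\ref{prop:upper bound on est error} applies and yields $\max\{\|\hat A - A\|,\|\hat B - B\|\}\le \varepsilon_0 := 4\sqrt{\alpha/N}$ with probability at least $1-\delta$, where $\alpha$ is exactly the quantity defined in the theorem statement. Next, I would check that the hypothesis $N\ge \alpha/\bar\varepsilon$ (combined with the fact that $\bar\varepsilon$ is small enough that $16\bar\varepsilon\le 1$ under the problem normalization in Assumption~\ref{ass:cost matrices}) implies $\varepsilon_0 \le \bar\varepsilon$, so in particular $\varepsilon_0$ satisfies the smallness condition~\eqref{eqn:upper bound on epsilon 2} needed for every perturbation bound in Section~\ref{sec:perturbation bounds} and for Propositions~\ref{prop:J_tilde minus J}--\ref{prop:upper bound on J_hat minue J_tilde}. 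This is the one routine verification that will require a bit of care but no cleverness.

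Then I would write $\hat J - J_\star = (\tilde J - J_\star) + (\hat J - \tilde J)$, conditionally on the high-probability event from the first step. Proposition~\ref{prop:J_tilde minus J} gives, for every $\varphi>0$, a bound of the form $\tilde J - J_\star \le C_2\,\varepsilon_0 + \varphi$, where $C_2$ collects the $\kappa,\gamma,\tilde\Gamma,p,q,\sigma_1(R)$ factors with $D_{\max}$ appearing in the exponents. Proposition~\ref{prop:upper bound on J_hat minue J_tilde} gives $\hat J - \tilde J \le C_3\,\varepsilon_0$ (using that under our estimation bound the actual perturbation is $\varepsilon_0$, not merely the threshold $\bar\varepsilon$ appearing in the hypothesis). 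Letting $\varphi\downarrow 0$ and adding the two bounds yields $\hat J - J_\star \le (C_2+C_3)\,\varepsilon_0$. Substituting $\varepsilon_0 = 4\sqrt{\alpha/N}$ produces the $\sqrt{\alpha/N}$ rate in~\eqref{eqn:J_hat minus J_star}.

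The bulk of the remaining work is bookkeeping: replacing $D_{\max}$ by its uniform upper bound $D$ in every exponent, combining the $\kappa^4$ from Proposition~\ref{prop:J_tilde minus J} and the $\kappa^6$ from Proposition~\ref{prop:upper bound on J_hat minue J_tilde} into a single $\kappa^6$, collecting the $(1-\gamma^2)^{-2}$ and $(1-\gamma)^{-4}(1-\gamma^2)^{-1}$ factors into $(1-\gamma^2)^{-2}$ up to a universal constant (using $1-\gamma^2 \le 2(1-\gamma)$), merging the dimension factors into $np^4q^3$, and combining $\tilde\Gamma^{4D+8}(20\tilde\Gamma^9)^{D}$ and $\tilde\Gamma^{4D+2}(\tilde\Gamma+1)^{2D+3}$ into a single $\tilde\Gamma^{11D+5}(\tilde\Gamma+1)^{2D+3}\sigma_1(R)^D$ term (absorbing the numerical factor $20^D$ and any slack into the universal constant $C_1$). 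The three cost factors $\Gamma^3 + \sigma_1(R)$ from Proposition~\ref{prop:J_tilde minus J} and $\sigma_1(Q)+\sigma_1(R)$ from Proposition~\ref{prop:upper bound on J_hat minue J_tilde} combine into $\Gamma^3 + \sigma_1(R) + \sigma_1(Q)$. Finally, $1+\sigma_1(R^{-1})\le 2\sigma_1(R^{-1})$ is absorbed by $C_1$ since Assumption~\ref{ass:cost matrices} bounds $\sigma_1(R^{-1})$ by $1$. This yields exactly~\eqref{eqn:J_hat minus J_star}.

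The main obstacle is not conceptual but rather the constant/exponent accounting in this last paragraph, and in particular verifying that the sample size condition $N\ge \alpha/\bar\varepsilon$ really is strong enough to activate the perturbation regime of Section~\ref{sec:perturbation bounds}. Everything else is a direct application of results already established in Sections~\ref{sec:system ID} and~\ref{sec:sub guarantees}, and a union bound over the single high-probability event provided by Proposition~\ref{prop:upper bound on est error}.
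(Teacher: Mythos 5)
Your proposal follows the paper's proof essentially step for step: invoke Proposition~\ref{prop:upper bound on est error} to get $\|\hat A-A\|,\|\hat B-B\|\le\varepsilon_0=4\sqrt{\alpha/N}$ with probability at least $1-\delta$, argue that the sample-size condition places $\varepsilon_0$ below the threshold $\bar\varepsilon$ so that the perturbation regime of Section~\ref{sec:perturbation bounds} is active, decompose $\hat J-J_\star=(\tilde J-J_\star)+(\hat J-\tilde J)$, and apply Propositions~\ref{prop:J_tilde minus J} and~\ref{prop:upper bound on J_hat minue J_tilde} with the actual error $\varepsilon_0$ in place of $\bar\varepsilon$, followed by the same constant and exponent bookkeeping. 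The only immaterial deviation is that you send $\varphi\downarrow 0$ where the paper sets $\varphi=1/\sqrt{N}$; both dispose of the additive slack in Proposition~\ref{prop:J_tilde minus J}, and you correctly flag the same two delicate points (the $\varepsilon_0\le\bar\varepsilon$ verification and the $\varepsilon_0$-versus-$\bar\varepsilon$ substitution) that the paper's own proof handles implicitly.
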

\begin{proof}
Note that the results in Propositions~\ref{prop:J_tilde minus J}-\ref{prop:upper bound on J_hat minue J_tilde} hold, if $\norm{\hat{A}-A}\le\bar{\varepsilon}$ and $\norm{\hat{B}-B}\le\bar{\varepsilon}$ with $\bar{\varepsilon}$ given in \eqref{eqn:aux parameters}. Thus, letting $N\ge\frac{\alpha}{\bar{\varepsilon}}$ and $\lambda\ge\underline{\sigma}^2/40$, one can first check that $N\ge200(n+m)\log\frac{48}{\delta}$, and then obtain from Proposition~\ref{prop:upper bound on est error} that with probability at least $1-\delta$, $\hat{A}$ and $\hat{B}$ returned by Algorithm~\ref{algorithm:least squares} satisfy that $\norm{\hat{A}-A}\le\bar{\varepsilon}$ and $\norm{\hat{B}-B}\le\bar{\varepsilon}$. Now, noting that $D_{\max}\le D$, where $D$ is a universal constant, and setting $\varphi=1/\sqrt{N}$ in Proposition~\ref{prop:J_tilde minus J}, one can then show via Propositions~\ref{prop:J_tilde minus J}-\ref{prop:upper bound on J_hat minue J_tilde} that \eqref{eqn:J_hat minus J_star} holds with probability at least $1-\delta$.
\end{proof}

Thus, we have shown a $\tilde{\CO}(1/\sqrt{N})$ end-to-end sample complexity result for learning decentralized LQR with the partially nested information structure. In other words, we relate the number of data samples used for estimating the system model to the performance of the control policy proposed in Section~\ref{sec:control design}. Note that our result in Theorem~\ref{thm:sample complexity} matches with the $\CO(1/\sqrt{N})$ sample complexity result (up to logarithm factors in $N$) provided in \cite{dean2020sample} for learning centralized LQR without any information constraints. Also note that the sample complexity for learning centralized LQR has been improved to $\CO(1/N)$ in \cite{mania2019certainty}. Specifically, the authors in \cite{mania2019certainty} showed that the gap between the cost $\hat{J}$ corresponding to the control policy they proposed and the optimal cost $J_{\star}$ is upper bounded by $\CO(\varepsilon^2)$, where $\norm{\hat{A}-A}\le\varepsilon$ and $\norm{\hat{B}-B}\le\varepsilon$, if $\varepsilon$ is sufficiently small. Due to the additional challenges introduced by the information constraints on the controllers (see our discussions at the end of Section~\ref{sec:control design}), we leave investigating the possibility of improving our sample complexity result in Theorem~\ref{thm:sample complexity} for future work.

\section{Numerical Results}\label{sec:numerical results}
In this section, we illustrate the sample complexity result provided in Theorem~\ref{thm:sample complexity} with numerical experiments, where the numerical experiments are conducted based on Example~\ref{exp:running example}. Specifically, we consider the LTI system given by Eq.~\eqref{eqn:LTI in exp} with the corresponding directed graph and information graph given by Fig.~\ref{fig:directed graph} and Fig.~\ref{fig:info graph}, respectively. Under the sparsity pattern of $A$ and $B$ specified in Eq.~\eqref{eqn:LTI in exp}, we generate the nonzero entries in $A\in\R^{3\times 3}$ and $B\in\R^{3\times  3}$ independently by the Gaussian distribution $\CN(0,1)$ while satisfying Assumption~\ref{ass:stable A}. We set the covariance of the zero-mean white Gaussian noise process $w(t)$ to be $I$, and set the cost matrices to be $Q=2I$ and $R=5I$. Moreover, we set the input sequence used in the system identification algorithm (Algorithm~\ref{algorithm:least squares}) to be $u(t)\overset{\text{i.i.d.}}{\sim}\CN(0,I)$ for all $t\in\{0,\dots,N-1\}$. In order to approximate the value of $\hat{J}$ defined in Eq.~\eqref{eqn:J hat}, we simulate the system using Algorithm~\ref{algorithm:control design} for $T=2000$ and obtain $\hat{J}\approx\frac{1}{T}\sum_{t=0}^{T-1}\big(\tilde{x}(t)^{\top}Q\tilde{x}(t)+\tilde{u}(t)^{\top}R\tilde{u}(t)\big)$. Fixing the randomly generated matrices $A$ and $B$ described above, the numerical results presented in this section are obtained by averaging over $100$ independent experiments.

\begin{figure}[htbp]
\centering
\subfloat[a][Estimation Error vs. $N$]{
\includegraphics[width=0.40\linewidth]{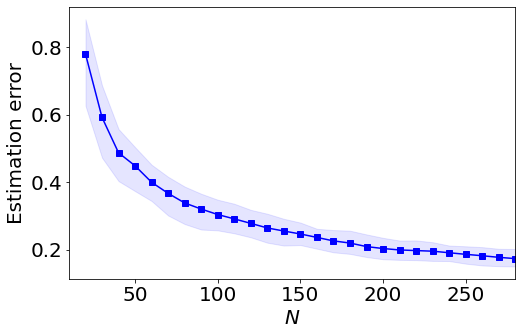}}
\subfloat[b][Cost Suboptimality vs. $N$]{
\includegraphics[width=0.38\linewidth]{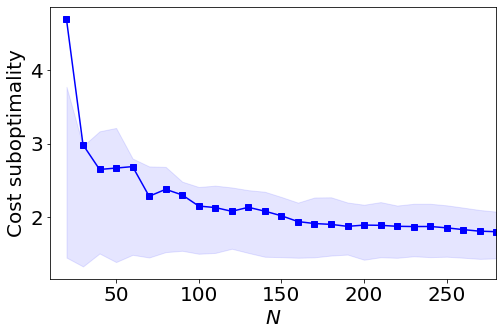}}
\caption{Both the performance of Algorithm~\ref{algorithm:least squares} and the performance of Algorithm~\ref{algorithm:control design} are plotted against the number of data samples used for estimating the system model, where shaded regions display quartiles.}
\label{fig:numerical results}
\end{figure}

In Fig.~\ref{fig:numerical results}(a), we plot the estimation error $\big\lVert\begin{bmatrix}\hat{A} & \hat{B}\end{bmatrix}-\begin{bmatrix}A & B\end{bmatrix}\big\rVert$ corresponding to Algorithm~\ref{algorithm:least squares} when we range the number of the data samples used in Algorithm~\ref{algorithm:least squares} from $N=20$ to $N=280$. Similarly, in Fig.~\ref{fig:numerical results}(b) we plot the curve corresponding to the cost suboptimality $\hat{J}-J_{\star}$, where $J_{\star}$ is obtained by the closed-form expression given in Eq.~\eqref{eqn:opt J}. According to Fig.~\ref{fig:numerical results}, we observe that the estimation error and the cost suboptimality share a similar dependency pattern on $N$. The similar dependency on $N$ aligns with the results shown in Proposition~\ref{prop:upper bound on est error} and Theorem~\ref{thm:sample complexity} that both the estimation error and the cost suboptimality scale as $\tilde{\CO}(1/\sqrt{N})$, which is a consequence of the results shown in Propositions~\ref{prop:J_tilde minus J}-\ref{prop:upper bound on J_hat minue J_tilde} that the cost suboptimality scales linearly with the estimation error. The results presented in Fig.~\ref{fig:numerical results} then also imply that our suboptimality results provided in Propositions~\ref{prop:J_tilde minus J}-\ref{prop:upper bound on J_hat minue J_tilde} can be tight for certain instances of the problem. Finally, we observe from the shaded regions in Fig.~\ref{fig:numerical results} that the cost suboptimality is more sensitive to the randomness introduced by the random input $u(t)\overset{\text{i.i.d.}}{\sim}\CN(0,I)$ for $t\in\{0,\dots,N-1\}$ and the noise $w(t)\overset{\text{i.i.d.}}{\sim}\CN(0,I)$ for $t\in\Z_{\ge0}$, when we run the $100$ experiments described above. This is potentially due to the fact that we approximated the cost suboptimality as $\frac{1}{T}\sum_{t=0}^{T-1}\big(\hat{x}(t)^{\top}Q\hat{x}(t)+\hat{u}(t)^{\top}R\hat{u}(t)\big)-J_{\star}$ with $T=2000$.

\section{Conclusion}
We considered the problem of control policy design for decentralized state-feedback linear quadratic control with a partially nested information structure, when the system model is unknown. We took a model-based learning approach consisting of two steps. First, we estimated the unknown system model from a single system trajectory of finite length, using least squares estimation. Next, we designed a control policy based on the estimated system model, which satisfies the desired information constraints. We showed that the suboptimality gap between our control policy and the optimal decentralized control policy (designed using accurate knowledge of the system model) scales linearly with the estimation error of the system model. Combining the above results, we provided an end-to-end sample complexity of learning decentralized controllers for state-feedback linear quadratic control with a partially nested information structure.

\bibliography{bibliography}

\begin{thebibliography}{43}
\providecommand{\natexlab}[1]{#1}
\providecommand{\url}[1]{\texttt{#1}}
\expandafter\ifx\csname urlstyle\endcsname\relax
  \providecommand{\doi}[1]{doi: #1}\else
  \providecommand{\doi}{doi: \begingroup \urlstyle{rm}\Url}\fi

\bibitem[Abbasi-Yadkori and Szepesv{\'a}ri(2011)]{abbasi2011regret}
Y.~Abbasi-Yadkori and C.~Szepesv{\'a}ri.
\newblock Regret bounds for the adaptive control of linear quadratic systems.
\newblock In \emph{Proc. Conference on Learning Theory}, pages 1--26, 2011.

\bibitem[Abbasi-Yadkori et~al.(2011)Abbasi-Yadkori, P{\'a}l, and
  Szepesv{\'a}ri]{abbasi2011improved}
Y.~Abbasi-Yadkori, D.~P{\'a}l, and C.~Szepesv{\'a}ri.
\newblock Improved algorithms for linear stochastic bandits.
\newblock In \emph{Proc. Advances in neural information processing systems},
  volume~24, pages 2312--2320, 2011.

\bibitem[Abbasi-Yadkori et~al.(2019)Abbasi-Yadkori, Bartlett, Bhatia, Lazic,
  Szepesvari, and Weisz]{abbasi2019politex}
Y.~Abbasi-Yadkori, P.~Bartlett, K.~Bhatia, N.~Lazic, C.~Szepesvari, and
  G.~Weisz.
\newblock Politex: Regret bounds for policy iteration using expert prediction.
\newblock In \emph{Proc. International Conference on Machine Learning}, pages
  3692--3702, 2019.

\bibitem[{\AA}str{\"o}m and Wittenmark(2008)]{aastrom2008adaptive}
K.~J. {\AA}str{\"o}m and B.~Wittenmark.
\newblock \emph{Adaptive Control}.
\newblock Courier Corporation, 2008.

\bibitem[Bertsekas(2017)]{bertsekas2017dynamic}
D.~P. Bertsekas.
\newblock \emph{Dynamic programming and optimal control: Vol. 2 4th Edition}.
\newblock Athena Scientific, 2017.

\bibitem[Blondel and Tsitsiklis(2000)]{blondel2000survey}
V.~D. Blondel and J.~N. Tsitsiklis.
\newblock A survey of computational complexity results in systems and control.
\newblock \emph{Automatica}, 36\penalty0 (9):\penalty0 1249--1274, 2000.

\bibitem[Bu et~al.(2019)Bu, Mesbahi, Fazel, and Mesbahi]{bu2019lqr}
J.~Bu, A.~Mesbahi, M.~Fazel, and M.~Mesbahi.
\newblock {LQR} through the lens of first order methods: Discrete-time case.
\newblock \emph{arXiv preprint arXiv:1907.08921}, 2019.

\bibitem[Cassel and Koren(2021)]{cassel2021online}
A.~Cassel and T.~Koren.
\newblock Online policy gradient for model free learning of linear quadratic
  regulators with $\sqrt{T}$ regret.
\newblock \emph{arXiv preprint arXiv:2102.12608}, 2021.

\bibitem[Cassel et~al.(2020)Cassel, Cohen, and Koren]{cassel2020logarithmic}
A.~Cassel, A.~Cohen, and T.~Koren.
\newblock Logarithmic regret for learning linear quadratic regulators
  efficiently.
\newblock In \emph{Proc. International Conference on Machine Learning}, pages
  1328--1337, 2020.

\bibitem[Cohen et~al.(2019)Cohen, Koren, and Mansour]{cohen2019learning}
A.~Cohen, T.~Koren, and Y.~Mansour.
\newblock Learning linear-quadratic regulators efficiently with only $\sqrt{T}$
  regret.
\newblock In \emph{Proc. International Conference on Machine Learning}, pages
  1300--1309, 2019.

\bibitem[Dean et~al.(2018)Dean, Mania, Matni, Recht, and Tu]{dean2018regret}
S.~Dean, H.~Mania, N.~Matni, B.~Recht, and S.~Tu.
\newblock Regret bounds for robust adaptive control of the linear quadratic
  regulator.
\newblock In \emph{Proc. International Conference on Neural Information
  Processing Systems}, pages 4192--4201, 2018.

\bibitem[Dean et~al.(2020)Dean, Mania, Matni, Recht, and Tu]{dean2020sample}
S.~Dean, H.~Mania, N.~Matni, B.~Recht, and S.~Tu.
\newblock On the sample complexity of the linear quadratic regulator.
\newblock \emph{Foundations of Computational Mathematics}, 20\penalty0
  (4):\penalty0 633--679, 2020.

\bibitem[Faradonbeh et~al.(2018)Faradonbeh, Tewari, and
  Michailidis]{faradonbeh2018finite}
M.~K.~S. Faradonbeh, A.~Tewari, and G.~Michailidis.
\newblock Finite time identification in unstable linear systems.
\newblock \emph{Automatica}, 96:\penalty0 342--353, 2018.

\bibitem[Fattahi et~al.(2019)Fattahi, Matni, and Sojoudi]{fattahi2019learning}
S.~Fattahi, N.~Matni, and S.~Sojoudi.
\newblock Learning sparse dynamical systems from a single sample trajectory.
\newblock In \emph{Proc. IEEE Conference on Decision and Control}, pages
  2682--2689, 2019.

\bibitem[Fattahi et~al.(2020)Fattahi, Matni, and Sojoudi]{fattahi2020efficient}
S.~Fattahi, N.~Matni, and S.~Sojoudi.
\newblock Efficient learning of distributed linear-quadratic control policies.
\newblock \emph{SIAM Journal on Control and Optimization}, 58\penalty0
  (5):\penalty0 2927--2951, 2020.

\bibitem[Fazel et~al.(2018)Fazel, Ge, Kakade, and Mesbahi]{fazel2018global}
M.~Fazel, R.~Ge, S.~Kakade, and M.~Mesbahi.
\newblock Global convergence of policy gradient methods for the linear
  quadratic regulator.
\newblock In \emph{Proc. International Conference on Machine Learning}, pages
  1467--1476, 2018.

\bibitem[Feng and Lavaei(2019)]{feng2019exponential}
H.~Feng and J.~Lavaei.
\newblock On the exponential number of connected components for the feasible
  set of optimal decentralized control problems.
\newblock In \emph{Proc. American Control Conference}, pages 1430--1437, 2019.

\bibitem[Furieri et~al.(2020)Furieri, Zheng, and
  Kamgarpour]{furieri2020learning}
L.~Furieri, Y.~Zheng, and M.~Kamgarpour.
\newblock Learning the globally optimal distributed {LQ} regulator.
\newblock In \emph{Proc. Learning for Dynamics and Control Conference}, pages
  287--297, 2020.

\bibitem[Ghadimi and Lan(2013)]{ghadimi2013stochastic}
S.~Ghadimi and G.~Lan.
\newblock Stochastic first-and zeroth-order methods for nonconvex stochastic
  programming.
\newblock \emph{SIAM Journal on Optimization}, 23\penalty0 (4):\penalty0
  2341--2368, 2013.

\bibitem[Gravell et~al.(2020)Gravell, Esfahani, and
  Summers]{gravell2020learning}
B.~Gravell, P.~M. Esfahani, and T.~H. Summers.
\newblock Learning optimal controllers for linear systems with multiplicative
  noise via policy gradient.
\newblock \emph{IEEE Transactions on Automatic Control}, 2020.

\bibitem[Ho et~al.(1972)]{ho1972team}
Y.-C. Ho et~al.
\newblock Team decision theory and information structures in optimal control
  problems--part i.
\newblock \emph{IEEE Transactions on Automatic control}, 17\penalty0
  (1):\penalty0 15--22, 1972.

\bibitem[Horn and Johnson(2012)]{horn2012matrix}
R.~A. Horn and C.~R. Johnson.
\newblock \emph{Matrix analysis}.
\newblock Cambridge university press, 2012.

\bibitem[Hou and Wang(2013)]{hou2013model}
Z.-S. Hou and Z.~Wang.
\newblock From model-based control to data-driven control: Survey,
  classification and perspective.
\newblock \emph{Information Sciences}, 235:\penalty0 3--35, 2013.

\bibitem[Lale et~al.(2020)Lale, Azizzadenesheli, Hassibi, and
  Anandkumar]{lale2020logarithmic}
S.~Lale, K.~Azizzadenesheli, B.~Hassibi, and A.~Anandkumar.
\newblock Logarithmic regret bound in partially observable linear dynamical
  systems.
\newblock \emph{arXiv preprint arXiv:2003.11227}, 2020.

\bibitem[Lamperski and Doyle(2012)]{lamperski2012dynamic}
A.~Lamperski and J.~C. Doyle.
\newblock Dynamic programming solutions for decentralized state-feedback {LQG}
  problems with communication delays.
\newblock In \emph{Proc. American Control Conference}, pages 6322--6327, 2012.

\bibitem[Lamperski and Lessard(2015)]{lamperski2015optimal}
A.~Lamperski and L.~Lessard.
\newblock Optimal decentralized state-feedback control with sparsity and
  delays.
\newblock \emph{Automatica}, 58:\penalty0 143--151, 2015.

\bibitem[Li et~al.(2019)Li, Tang, Zhang, and Li]{li2019distributed}
Y.~Li, Y.~Tang, R.~Zhang, and N.~Li.
\newblock Distributed reinforcement learning for decentralized linear quadratic
  control: A derivative-free policy optimization approach.
\newblock \emph{arXiv preprint arXiv:1912.09135}, 2019.

\bibitem[Malik et~al.(2020)Malik, Pananjady, Bhatia, Khamaru, Bartlett, and
  Wainwright]{malik2020derivative}
D.~Malik, A.~Pananjady, K.~Bhatia, K.~Khamaru, P.~L. Bartlett, and M.~J.
  Wainwright.
\newblock Derivative-free methods for policy optimization: Guarantees for
  linear quadratic systems.
\newblock \emph{Journal of Machine Learning Research}, 21\penalty0
  (21):\penalty0 1--51, 2020.

\bibitem[Mania et~al.(2019)Mania, Tu, and Recht]{mania2019certainty}
H.~Mania, S.~Tu, and B.~Recht.
\newblock Certainty equivalence is efficient for linear quadratic control.
\newblock In \emph{Proc. International Conference on Neural Information
  Processing Systems}, pages 10154--10164, 2019.

\bibitem[Nesterov and Spokoiny(2017)]{nesterov2017random}
Y.~Nesterov and V.~Spokoiny.
\newblock Random gradient-free minimization of convex functions.
\newblock \emph{Foundations of Computational Mathematics}, 17\penalty0
  (2):\penalty0 527--566, 2017.

\bibitem[Papadimitriou and Tsitsiklis(1986)]{papadimitriou1986intractable}
C.~H. Papadimitriou and J.~Tsitsiklis.
\newblock Intractable problems in control theory.
\newblock \emph{SIAM Journal on Control and Optimization}, 24\penalty0
  (4):\penalty0 639--654, 1986.

\bibitem[Rotkowitz and Lall(2005)]{rotkowitz2005characterization}
M.~Rotkowitz and S.~Lall.
\newblock A characterization of convex problems in decentralized control.
\newblock \emph{IEEE Transactions on Automatic Control}, 50\penalty0
  (12):\penalty0 1984--1996, 2005.

\bibitem[Rotkowitz and Martins(2011)]{rotkowitz2011nearest}
M.~C. Rotkowitz and N.~C. Martins.
\newblock On the nearest quadratically invariant information constraint.
\newblock \emph{IEEE Transactions on Automatic Control}, 57\penalty0
  (5):\penalty0 1314--1319, 2011.

\bibitem[Sarkar and Rakhlin(2019)]{sarkar2019near}
T.~Sarkar and A.~Rakhlin.
\newblock Near optimal finite time identification of arbitrary linear dynamical
  systems.
\newblock In \emph{Proc. International Conference on Machine Learning}, pages
  5610--5618, 2019.

\bibitem[Shah and Parrilo(2013)]{shah2013cal}
P.~Shah and P.~A. Parrilo.
\newblock $\mathcal{H}_2$-optimal decentralized control over posets: A
  state-space solution for state-feedback.
\newblock \emph{IEEE Transactions on Automatic Control}, 58\penalty0
  (12):\penalty0 3084--3096, 2013.

\bibitem[Simchowitz et~al.(2018)Simchowitz, Mania, Tu, Jordan, and
  Recht]{simchowitz2018learning}
M.~Simchowitz, H.~Mania, S.~Tu, M.~I. Jordan, and B.~Recht.
\newblock Learning without mixing: Towards a sharp analysis of linear system
  identification.
\newblock In \emph{Proc. Conference On Learning Theory}, pages 439--473, 2018.

\bibitem[Simchowitz et~al.(2020)Simchowitz, Singh, and
  Hazan]{simchowitz2020improper}
M.~Simchowitz, K.~Singh, and E.~Hazan.
\newblock Improper learning for non-stochastic control.
\newblock In \emph{Proc. Conference on Learning Theory}, pages 3320--3436,
  2020.

\bibitem[Tu and Recht(2019)]{tu2019gap}
S.~Tu and B.~Recht.
\newblock The gap between model-based and model-free methods on the linear
  quadratic regulator: An asymptotic viewpoint.
\newblock In \emph{Proc. Conference on Learning Theory}, pages 3036--3083,
  2019.

\bibitem[Witsenhausen(1968)]{witsenhausen1968counterexample}
H.~S. Witsenhausen.
\newblock A counterexample in stochastic optimum control.
\newblock \emph{SIAM Journal on Control}, 6\penalty0 (1):\penalty0 131--147,
  1968.

\bibitem[Yu et~al.(2022)Yu, Ho, and Wierman]{yu2022online}
J.~Yu, D.~Ho, and A.~Wierman.
\newblock Online stabilization of unknown networked systems with communication
  constraints.
\newblock \emph{arXiv preprint arXiv:2203.02630}, 2022.

\bibitem[Zhang et~al.(2020)Zhang, Hu, and Basar]{zhang2020policy}
K.~Zhang, B.~Hu, and T.~Basar.
\newblock Policy optimization for $\mathcal{H}_2$ linear control with
  $\mathcal{H}_{\infty}$ robustness guarantee: Implicit regularization and
  global convergence.
\newblock In \emph{Proc. Learning for Dynamics and Control Conference}, pages
  179--190, 2020.

\bibitem[Zheng et~al.(2020)Zheng, Furieri, Papachristodoulou, Li, and
  Kamgarpour]{zheng2020equivalence}
Y.~Zheng, L.~Furieri, A.~Papachristodoulou, N.~Li, and M.~Kamgarpour.
\newblock On the equivalence of {Youla}, system-level, and input--output
  parameterizations.
\newblock \emph{IEEE Transactions on Automatic Control}, 66\penalty0
  (1):\penalty0 413--420, 2020.

\bibitem[Zheng et~al.(2021)Zheng, Furieri, Kamgarpour, and Li]{zheng2021sample}
Y.~Zheng, L.~Furieri, M.~Kamgarpour, and N.~Li.
\newblock Sample complexity of linear quadratic gaussian {(LQG)} control for
  output feedback systems.
\newblock In \emph{Proc. Learning for Dynamics and Control Conference}, pages
  559--570, 2021.

\end{thebibliography}

\section*{Appendix}
\appendix

\section{Proofs for Least Squares Estimation of System Matrices}\label{sec:proofs ls}
\subsection{Proof of Lemma~\ref{lemma:prob of CE}}
First, let us consider $\CE_w$. We can apply Lemma~\ref{lemma:upper bound on gaussian w} with $\delta_w=\frac{\delta}{4}$ and obtain that $\Prob(\CE_w)\ge1-\frac{\delta}{4}$. Similarly, recalling from Algorithm~\ref{algorithm:least squares} that $u(t)\overset{\text{i.i.d.}}{\sim}\CN(0,\sigma_u^2I)$ for all $t\in\{0,\dots,N-1\}$, we have from Lemma~\ref{lemma:upper bound on gaussian w} that $\Prob(\CE_u)\ge1-\frac{\delta}{4}$. Next, let us consider $\CE_{\Theta}$. Applying Lemma~\ref{lemma:est error of Theta_hat} with $\delta_{\Theta}=\frac{\delta}{4}$, we obtain $\Prob(\CE_{\Theta})\ge1-\frac{\delta}{4}$.

Finally, let us consider $\CE_z$. Consider the sequence of random vectors $\{z(t)\}_{t\ge0}$ and the filtration $\{\F_t\}_{t\ge0}$, where $z(t)\in\R^{n+m}$ is defined in \eqref{eqn:Theta_i and z_Ni}, and $\F_t=\sigma(x(0),u(0),\dots,x(t),u(t))$ for all $t\in\Z_{\ge0}$. For any $t\in\{1,\dots,N-1\}$, we note from Eq.~\eqref{eqn:overall system} that $x(t)$ is conditionally Gaussian on $x(t-1)$ and $u(t-1)$, with
\begin{equation*}
\E\big[x(t)x(t)^{\top}|\F_{t-1}\big]\succeq\E\big[w(t-1)w(t-1)^{\top}\big]=\sigma_w^2I.
\end{equation*}
Note again from Algorithm~\ref{algorithm:least squares} that $u(t)\overset{\text{i.i.d.}}{\sim}\CN(0,\sigma^2_uI)$ for all $t\in\{0,\dots,N-1\}$, and that $u(t)$ is assumed to be independent of $w(t)$ for all $t\in\{0,\dots,N-1\}$. We then see that $z(t)$ is also conditionally Gaussian on $x(t-1)$ and $u(t-1)$, with
\begin{align}\nonumber
\E\big[z(t)z(t)^{\top}|\F_{t-1}\big]&=\begin{bmatrix}\E[x(t)x(t)^{\top}|\F_{t-1}] & 0\\ 0 & \E[u(t)u(t)^{\top}]\end{bmatrix}\\\nonumber
&=\begin{bmatrix}\E[x(t)x(t)^{\top}|\F_{t-1}] & 0\\ 0 & \sigma_u^2I\end{bmatrix}\succeq\underline{\sigma}^2I.
\end{align}
Now, we can apply Lemma~\ref{lemma:lower bound on sum over z_k} with $\{z(t)\}_{t\ge0}$ and $\{\F_t\}_{t\ge0}$ described above, and let $\delta_z=\frac{\delta}{4p}$. Since $N\ge200(n+m)\log\frac{48}{\delta}$, we have from Lemma~\ref{lemma:lower bound on sum over z_k} that $\sum_{t=0}^{N-1}z(t)z(t)^{\top}\succeq\frac{(N-1)\underline{\sigma}^2}{40}I$ holds with probability at least $1-\frac{\delta}{4}$. Combining the above arguments together and applying a union bound over the events $\CE_w$, $\CE_{\psi}$, $\CE_{\Theta}$ and $\CE_z$, we complete the proof of the lemma. \hfill\qed

\subsection{Proof of Lemma~\ref{lemma:upper bound on norm of z(t)}}
First, considering any $t\in\{0,\dots,N-1\}$, we denote $\psi(t)=Bu(t)+w(t)$. We see from \eqref{eqn:events} that
\begin{align}\nonumber
\norm{\psi(k)}&\le\norm{B}\sigma_u\sqrt{5m\log\frac{4N}{\delta}}+\sigma_w\sqrt{5n\log\frac{4N}{\delta}}\\\nonumber
&\le\overline{\sigma}(\sqrt{m\norm{B}^2}+\sqrt{n})\sqrt{5\log\frac{4N}{\delta}}\\\nonumber
&\le\overline{\sigma}\sqrt{10(m\norm{B}^2+n)\log\frac{4N}{\delta}}.
\end{align}
Next, for any $t\in\{1,\dots,N-1\}$, we see from Eq.~\eqref{eqn:overall system} that 
\begin{equation*}
\label{eqn:state x(k)}
x(t)=A^tx(0)+\sum_{k=0}^{t-1}A^{t-1-k}\psi(k),
\end{equation*}
where recall that we assumed previously that $x(0)=0$. Since $A$ is stable from Assumption~\ref{ass:stable A}, we know that $\norm{A^k}\le\kappa_0\gamma_0^k$ for all $k\in\Z_{\ge0}$, where $\kappa_0\ge1$ and $\rho(A)<\gamma_0<1$. It now follows from the above arguments that
\begin{equation*}
\label{eqn:upper bound on norm of x(k)}
\norm{x(t)}\le\frac{\kappa_0}{1-\gamma_0}\overline{\sigma}\sqrt{10(m\norm{B}^2+n)\log\frac{4N}{\delta}},
\end{equation*}
for all $t\in\{0,\dots,N-1\}$. Noting that $\norm{z(t)}\le\norm{x(t)}+\norm{u(t)}$, we then have
\begin{align}\nonumber
\norm{z(t)}&\le\frac{\kappa_0}{1-\gamma_0}\overline{\sigma}\sqrt{10(m\norm{B}^2+n)\log\frac{4N}{\delta}}+\sigma_u\sqrt{5m\log\frac{4N}{\delta}}\\\nonumber
&\le\frac{5\kappa_0}{1-\gamma_0}\overline{\sigma}\sqrt{(\norm{B}^2m+m+n)\log\frac{4N}{\delta}},
\end{align}
for all $t\in\{0,\dots,N-1\}$.\hfill\qed

\subsection{Proof of Proposition~\ref{prop:upper bound on est error}}
First, we see from Eq.~\eqref{eqn:events} that on the event $\CE$ defined in Eq.~\eqref{eqn:good event}, the following holds: 
\begin{equation}
\label{eqn:upper bound on V_i}
\tr\big(\Delta(N)^{\top}V(N)\Delta(N)\big)\le 4\sigma_w^2n\log\bigg(\frac{4n}{\delta}\frac{\det(V(N))}{\det(\lambda I)}\bigg)+4\lambda n\vartheta^2,
\end{equation}
where recall that $V(N)=\lambda I +\sum_{t=0}^{N-1}z(t)z(t)^{\top}$ and $\Delta(N)=\Theta-\hat{\Theta}(N)$, and where $\Theta$ and $z(t)$ are defined in \eqref{eqn:Theta_i and z_Ni}, and $\hat{\Theta}(N)$ is given by~\eqref{eqn:least squares approach}. To obtain \eqref{eqn:upper bound on V_i}, we also use the fact that $\norm{\Theta}^2\le\norm{A}^2+\norm{B}^2\le2\vartheta^2$, which implies via $\Theta\in\R^{n\times(n+m)}$ that $\norm{\Theta}_F^2\le2n\vartheta^2$. Moreover, we have from Eq.~\eqref{eqn:events} that on the event $\CE$, the following holds:
\begin{equation*}
\label{eqn:lower bound on V_i}
V(N)\succeq\lambda I+\frac{(N-1)\underline{\sigma}^2}{40}I\succeq\frac{N\underline{\sigma}^2}{40}I,
\end{equation*}
where the second inequality follows from the choice of $\lambda$. Combining the above arguments together, one can show that 
\begin{equation*}
\label{eqn:upper bound on Delta_i}
\norm{\Delta(N)}^2\le\frac{160}{N\underline{\sigma}}\bigg(n\sigma_w^2\log\bigg(\frac{4n}{\delta}\frac{\det(V(N))}{\det(\lambda I)}\bigg)+\lambda n\vartheta^2\bigg).
\end{equation*}
Noting from Lemma~\ref{lemma:upper bound on norm of z(t)} that $\norm{z(t)}\le z_b$ for all $t\in\{0,\dots,N-1\}$, one can use similar arguments to those for \cite[Lemma~37]{cassel2020logarithmic}, and show that 
\begin{equation*}
\label{eqn:log ratio of det}
\log\frac{\det(V(N))}{\det(\lambda I)}\le(n+m)\log(N+\frac{z_b^2}{\lambda}).
\end{equation*}
Noting that $N\ge200(n+m)\log\frac{48}{\delta}$, we have $N\ge4n$. It then follows that 
\begin{align}\nonumber
\norm{\Delta(N)}^2&\le\frac{160}{N\underline{\sigma}^2}\bigg(n\sigma_w^2\log\frac{N}{\delta}+n\sigma_w^2(n+m)\log(N+\frac{z_b^2}{\lambda})+\lambda n\vartheta^2\bigg)\\\nonumber
&\le\frac{160}{N\underline{\sigma}^2}\bigg(2n\sigma_w^2(n+m)\log\frac{N+z^2_b/\lambda}{\delta}+\lambda n\vartheta^2\bigg).
\end{align}
Noting that Algorithm~\ref{algorithm:least squares} extracts $\hat{A}$ and $\hat{B}$ from $\hat{\Theta}(N)$, i.e., $\hat{\Theta}(N)=\begin{bmatrix}\hat{A} & \hat{B}\end{bmatrix}$, one can show that $\norm{\hat{A}-A}\le\norm{\hat{\Theta}(N)-\Theta}$ and $\norm{\hat{B}-B}\le\norm{\hat{\Theta}(N)-\Theta}$. Finally, since we know from Lemma~\ref{lemma:prob of CE} that $\Prob(\CE)\ge1-\delta$, we conclude that $\norm{\hat{A}-A}\le\varepsilon_0$ and $\norm{\hat{B}-B}\le\varepsilon_0$ hold with probability at least $1-\delta$.\hfill\qed

\section{Proof for Algorithm~\ref{algorithm:control design}}\label{sec:proof of control policy alg}
\subsection{Proof of Proposition~\ref{prop:alg1 feasible}}
To prove part~(a), we use an induction on $t=0,\dots,T-1$. For the base case $t=0$, we see directly from line~4 in Algorithm~\ref{algorithm:control design} and Eq.~\eqref{eqn:initial M_i} that $\M_i$ satisfies Eq.~\eqref{eqn:memory of the alg t-1} (with $t=0$) at the beginning of iteration $0$ of the for loop in lines~4-14 of the algorithm. For the induction step, consider any $t\in\{0,\dots,T-1\}$ and suppose the memory $\M_i$ satisfies Eq.~\eqref{eqn:memory of the alg t-1} at the beginning of iteration $t$ of the for loop in lines~4-14 of the algorithm. 

To proceed, let us consider any $s\in\CL(\T_i)$ with $j\in\V$ and $s_j(0)=s$ in the for loop in lines~5-9 of Algorithm~\ref{algorithm:control design}, where $\CL(\T_i)$ is defined in Eq.~\eqref{eqn:leaf nodes in T_i}. We will show that $\hat{w}_j(t-D_{ij}-1)$ in line~7, and thus $\hat{\zeta}_s(t-D_{ij})$ in line~8, can be determined using Eq.~\eqref{eqn:dynamics of zeta hat} and the current memory $\M_i$ of Algorithm~\ref{algorithm:control design}. As suggested by the first two cases in Eq.~\eqref{eqn:est w_i(t)}, we can focus on the case when $t-D_{ij}>0$ (otherwise, $\hat{\zeta}_s(t-D_{ij})$ can be directly determined). We then note from the third case in Eq.~\eqref{eqn:est w_i(t)} that in order to determine $\hat{w}_j(t-D_{ij}-1)$, we need to know $x_j(t-D_{ij})$, and $x_{j_1}(t-D_{ij}-1)$ and $\hat{u}_{j_1}(t-D_{ij}-1)$ for all $j_1\in\CN_j$, where $\CN_j$ is given in Assumption~\ref{ass:info structure}. Also note that $D_{ij_1}\le D_{ij}+1$ for all $j_1\in\CN_j$, which implies that $t-D_{\max}-1\le t-D_{ij}-1\le t-D_{ij_1}$. Thus, we have that $x_j(t-D_{ij})\in\tilde{\I}_i(t)$, and $x_{j_1}(t-D_{ij}-1)\in\tilde{\I}_i(t)$ for all $j_1\in\CN_j$, where $\tilde{\I}_i(t)$ is defined in Eq.~\eqref{eqn:info set used}. Now, considering any $j_1\in\CN_j$, we note from Eq.~\eqref{eqn:control policy} that $\hat{u}_{j_1}(t-D_{ij}-1)=\sum_{r\ni j_1}I_{\{j_1\},r}\hat{K}_r\hat{\zeta}_r(t-D_{ij}-1)$. Thus, in order to determine $\hat{u}_{j_1}(t-D_{ij}-1)$, it suffices to determine $\hat{\zeta}_r(t-D_{ij}-1)$ for all $r\in\U$ such that $j_1\in r$ (i.e., for all $r\ni j_1$). Note that $j_1\rightsquigarrow i$, i.e., there exists a directed path from node $j_1$ to node $i$ in $\G(\V,\A)$. Moreover, noting the definition of $\CP(\U,\CH)$ given by \eqref{eqn:def of info graph} with its properties discussed in Lemma~\ref{lemma:properties of info graph} and Remark~\ref{lemma:properties of info graph}, and noting the way we defined the set $\T_i$, one can show that for any $r\ni j_1$, it holds that $r\in\T_i$.
Next, considering any $r\ni j_1$, we recall from Eq.~\eqref{eqn:set of leaf nodes of s} that $\CL_r$ denotes the set of leaf nodes that can reach $r$ in $\CP(\U,\CH)$, i.e., $\CL_r=\{v\in\CL:v\rightsquigarrow r\}=\{v\in\CL(\T_i):v\rightsquigarrow r\}$, where the second equality again follows from the properties of $\CP(\U,\CH)$ and the definition of $\CL(\T_i)$ in Eq.~\eqref{eqn:leaf nodes in T_i}. 
Now, we split our arguments into two cases: $r$ is a root node in $\T_i$ (i.e., $r$ has a self loop), and $r$ does not have a self loop. 

First, suppose $r$ has a self loop. For the case when $r$ is a leaf node in $\T_i$ (i.e., $r$ is an isolated node in $\CP(\U,\CH)$ and $r\in\CL(\T_i)$), we see that $\hat{\zeta}_r(k)\in\M_i$ with $\M_i$ given by Eq.~\eqref{eqn:memory of the alg t-1}, for all $k\in\{k-2D_{\max}-1,\dots,k-D_{ij_r}-1\}$, where $j_r\in\V$ and $s_{j_r}(0)=r$. Noting from the definition of $\T_i$ that $i,j_r\in r$, we have from the construction of $\CP(\U,\CH)$ in Eq.~\eqref{eqn:def of info graph} that $j_r\rightarrow i$ in $\G(\V,\A)$ with $D_{ij_r}=0$. It follows that $\hat{\zeta}_r(t-D_{ij}-1)\in\M_i$ with $\M_i$ given by Eq.~\eqref{eqn:memory of the alg t-1}. Thus, we focus on the case when $r$ is not a leaf node in $\T_i$, i.e., $r\in\CR(\T_i)$. We now see from Eq.~\eqref{eqn:dynamics of zeta hat} that given $\hat{\zeta}_r(t-D_{ij}-2)$, and $\hat{\zeta}_{r^{\prime}}(t-D_{ij}-2)$ for all $r^{\prime}$ such that $r^{\prime}\rightarrow r$ (with $r\neq r^{\prime}$) in $\CP(\U,\CH)$, the state $\hat{\zeta}_r(t-D_{ij}-1)$ can be determined. Let us consider any $r^{\prime}$ such that $r^{\prime}\rightarrow r$ (with $r^{\prime}\neq r$) in $\CP(\U,\CH)$, and denote  $\CL_{r^{\prime}}=\{v^{\prime}\in\CL(\T_i):v^{\prime}\rightsquigarrow r^{\prime}\}$, where we note that $\CL_{r^{\prime}}\subseteq\CL_r$. For any $k\in\Z$, one can recursively apply Eq.~\eqref{eqn:dynamics of zeta hat} to show that given $\hat{\zeta}_{v^{\prime}}(k-l_{v^{\prime}r^{\prime}})$ for all $v^{\prime}\in\CL_{r^{\prime}}$, the state $\hat{\zeta}_{r^{\prime}}(k)$ can be determined, where $l_{v^{\prime}r^{\prime}}$ is the length of the (unique) directed path from $v^{\prime}$ to $r^{\prime}$ in $\CP(\U,\CH)$. Further considering any $v^{\prime}\in\CL_{r^{\prime}}$, and noting that $v^{\prime}\in\CL(\T_i)$, we have that $\hat{\zeta}_{v^{\prime}}(k)\in\M_i$ for all $k\in\{t-2D_{\max}-1,\dots,t-D_{ij_{v^{\prime}}}-1\}$ with $j_{v^{\prime}}\in\V$ and $s_{j_{v^{\prime}}}(0)=v^{\prime}$, where $\M_i$ is given by Eq.~\eqref{eqn:memory of the alg t-1}. Recalling again the definition of $\CP(\U,\CH)$ in \eqref{eqn:def of info graph}, and noting that $v^{\prime}\in\CL_{r^{\prime}}$, $j_{v^{\prime}}\in v^{\prime}$, $j_1\in r$, and $r^{\prime}\rightarrow r$ in $\CP(\U,\CH)$, one can show that 
\begin{equation}
\label{eqn:distance relation 2}
 D_{j_1j_{v^{\prime}}}\le l_{v^{\prime}r^{\prime}}+1\le D_{\max}. 
\end{equation}
We further split our arguments into $D_{ij_{v^{\prime}}}\le D_{ij}$ and $D_{ij_{v^{\prime}}}\ge D_{ij}+1$. First, supposing $D_{ij_{v^{\prime}}}\le D_{ij}$, we have
\begin{equation}
\label{eqn:distance relation 3}
\begin{split}
&t-2D_{\max}-1\le t-D_{\max}-l_{v^{\prime}r^{\prime}}-1\\
&t-D_{ij_{v^{\prime}}}-1\ge t-D_{ij}-l_{v^{\prime}r^{\prime}}-2.
\end{split}
\end{equation}
Second, suppose $D_{ij_{v^{\prime}}}\ge D_{ij}+1$. Recall from Remark~\ref{remark:order of elements in CL} that we let the for loop in lines~5-9 of Algorithm~\ref{algorithm:control design} iterate over the elements in $\CL(\T_i)$ according to a certain order of the elements in $\CL(\T_i)$. We then see from the inequality $D_{ij_{v^{\prime}}}\ge D_{ij}+1$ that $s_{j_{v^{\prime}}}(0)\in\CL(\T_i)$ (with $j_{v^{\prime}}\in\V$ and $s_{j_{v^{\prime}}}(0)=v^{\prime}$) has already been considered by the for loop in lines~5-9 in Algorithm~\ref{algorithm:control design}, i.e., the states $\hat{\zeta}_{v^{\prime}}(k)$ for all $k\in\{t-2D_{\max}-1,\dots,t-D_{ij_{v^{\prime}}}\}$ are in the current memory of Algorithm~\ref{algorithm:control design}, denoted as $\M_i^{\prime}$, when we consider the $s\in\CL(\T_i)$ with $j\in\V$ and $s_j(0)=s$ in the for loop in lines~5-9 of the algorithm. Moreover, we have from the above arguments that $j_{v^{\prime}}\rightsquigarrow j_1\rightsquigarrow i$ in $\G(\V,\A)$, i.e., there is a directed path from node $j_{v^{\prime}}$ to node $i$ that goes through node $j_1$ in $\G(\V,\A)$. It then follows that 
\begin{equation}
\label{eqn:distance relation 4}
D_{ij_{v^{\prime}}}\le  D_{ij_1}+D_{j_1j_{v^{\prime}}}\le D_{ij}+D_{jj_1}+D_{j_1j_{v^{\prime}}},
\end{equation} 
where $D_{jj_1}\in\{0,1\}$. Combining \eqref{eqn:distance relation 2} and \eqref{eqn:distance relation 4}, we obtain
\begin{equation}
\label{eqn:distance relation 3_1}
\begin{split}
&t-2D_{\max}-1\le t- D_{\max}-l_{v^{\prime}r^{\prime}}-1\\
& t-D_{ij_{v^{\prime}}}\ge t-D_{ij}-l_{v^{\prime}r^{\prime}}-2.
\end{split}
\end{equation}
It then follows from \eqref{eqn:distance relation 3} and \eqref{eqn:distance relation 3_1} and our arguments above that the states $\hat{\zeta}_{v^{\prime}}(k)$ for all $k\in\{t-D_{\max}-l_{v^{\prime}r^{\prime}}-1,\dots,t-D_{ij}-l_{v^{\prime}r^{\prime}}-2\}$ are in the current memory $\M_i^{\prime}$ described above, for all $v^{\prime}\in\CL_{r^{\prime}}$. Combining the above arguments together, we have that  $\hat{\zeta}_{r^{\prime}}(k)$ can be determined from Eq.~\eqref{eqn:dynamics of zeta hat} and the current memory $\M_i^{\prime}$, for all $k\in\{t-D_{\max}-1,\dots,t-D_{ij}-2\}$ and for all $r^{\prime}$ such that $r^{\prime}\rightarrow r$ (with $r\neq r^{\prime}$) in $\CP(\U,\CH)$. Moreover, recalling that $r\in\CR(\T_i)$ as we argued above, we see from Eq.~\eqref{eqn:memory of the alg t-1} that $\hat{\zeta}_r(t-D_{\max}-1)\in\M_i^{\prime}$. One can now apply Eq.~\eqref{eqn:dynamics of zeta hat} multiple times to show that $\hat{\zeta}_r(k)$ can be determined from the current memory $\M_i^{\prime}$ described above, for all $k\in\{t-D_{\max},\dots,t-D_{ij}-1\}$.

Next, suppose $r$ does not have a self loop. Similarly to our arguments above, we first consider the case when $r$ is a leaf node in $\T_i$, i.e., $r\in\CL(\T_i)$. We see that $\hat{\zeta}_r(t-D_{ij_r}-1)\in\M_i$, where $j_r\in\V$ with $s_{j_r}(0)=r$, and $\M_i$ is defined in Eq.~\eqref{eqn:memory of the alg t-1}. Since $j_1,j_r\in r$, we have from the construction of $\CP(\U,\CH)$ in \eqref{eqn:def of info graph} that $j_r\rightarrow j_1$ in $\G(\V,\A)$ with $D_{j_1j_r}=0$. Noting that $j_1\rightsquigarrow i$ in $\CP(\V,\A)$ as we argued above, we then have the following:
\begin{align}
D_{ij_r}\le D_{ij}+D_{jj_1}+D_{j_1j_r}=D_{ij}+D_{jj_1},\label{eqn:distance realtion 5}
\end{align}
where $D_{jj_1}\in\{0,1\}$. Now, supposing $D_{ij_r}\le D_{ij}$, we see directly see from Eq.~\eqref{eqn:memory of the alg t-1} that $\hat{\zeta}_r(t-D_{ij}-1)\in\M_i$ holds. Supposing $D_{ij_r}\ge D_{ij}+1$, we see from \eqref{eqn:distance realtion 5} that $D_{ij_r}=D_{ij}+1$. Using similar arguments to those above for the case when $r$ has a self loop (particularly, the order of the elements in $\CL(\T_i)$ over which the for loop in lines~5-9 of Algorithm~\ref{algorithm:control design} iterates), one can show that the states $\hat{\zeta}_r(k)$ for all $k\in\{t-2D_{\max}-1,\dots,t-D_{ij_r}\}$ have been added to the current memory of Algorithm~\ref{algorithm:control design}, denoted as $\M_i^{\prime\prime}$, when we consider the $s\in\CL(\T_i)$ with $j\in\V$ and $s_j(0)=s$ in the for loop in lines~5-9 of the algorithm. It follows that $\hat{\zeta}_r(t-D_{ij}-1)\in\M^{\prime\prime}_i$. Then, we consider the case when $r$ is not a leaf node in $\T_i$. We see from Eq.~\eqref{eqn:dynamics of zeta hat} that given $\hat{\zeta}_{r^{\prime}}(t-D_{ij}-2)$ for all $r^{\prime}$ such that $r^{\prime}\rightarrow r$ (with $r\neq r^{\prime}$) in $\CP(\U,\CH)$, the state $\hat{\zeta}_r(t-D_{ij}-1)$ can be determined. The remaining arguments then follow directly from those above for the case when $r$ has a self loop.

In summary, we have shown that $\hat{\zeta}_r(t-D_{ij}-1)$ can be determined from Eq.~\eqref{eqn:dynamics of zeta hat} and the current memory of Algorithm~\ref{algorithm:control design}, for all $r\ni j_1$, for all $j_1\in\CN_j$, and for all $s\in\CL(\T_i)$ with $j\in\V$ and $s_j(0)=s$. It then follows from our arguments above that  $\hat{w}_j(t-D_{ij}-1)$ in line~7 of Algorithm~\ref{algorithm:control design}, and thus $\hat{\zeta}_s(t-D_{ij})$ in line~8 of Algorithm~\ref{algorithm:control design}, can be determined using Eq.~\eqref{eqn:dynamics of zeta hat} and the current memory of Algorithm~\ref{algorithm:control design}, for all $s\in\CL(\T_i)$ with $j\in\V$ and $s_j(0)=s$. In other words, we have shown that $\hat{\zeta}_s(t-D_{ij})$ can be added to the memory of Algorithm~\ref{algorithm:control design} in line~9, for all $s\in\CL(\T_i)$ with $j\in\V$ and $s_j(0)=s$. 

Now, let us consider any $s\in\CR(\T_i)$ with $\CR(\T_i)$ defined in Eq.~\eqref{eqn:root nodes in T_i}. We will show that $\hat{\zeta}_r(t-D_{\max}-1)$ can be determined using Eq.~\eqref{eqn:dynamics of zeta hat} and the states in $\M_i$ given by Eq.~\eqref{eqn:memory of the alg t-1}, for all $r$ such that $r\rightarrow s$ in $\CP(\U,\CH)$. Note from our definition of $\CR(\T_i)$ in Eq.~\eqref{eqn:root nodes in T_i} that $s$ is not a leaf node. Following similar arguments to those above, let us consider any $r$ such that $r\rightarrow s$ (with $r\neq s$) in $\CP(\U,\CH)$, and denote $\CL_r=\{v\in\CL(\T_i):v\rightsquigarrow r\}$. Further considering any $v\in\CL_r$, and noting that $v\in\CL(\T_i)$, we have that $\hat{\zeta}_v(k)\in\M_i$ for all $k\in\{t-2D_{\max}-1,\dots,t-D_{ij_v}-1\}$, where $\M_i$ is given by Eq.~\eqref{eqn:memory of the alg t-1}, and $j_v\in\V$ with $s_{j_v}(0)=v$. Similarly to Eq.~\eqref{eqn:distance relation 2}, we have that $l_{vr}\le D_{\max}$, which implies that $t-2D_{\max}-1\le t-D_{\max}-l_{vr}-1$. Therefore, we see that $\hat{\zeta}_v(t-D_{\max}-l_{vr}-1)\in\M_i$ with $\M_i$ given by Eq.~\eqref{eqn:memory of the alg t-1}, for all $v\in\CL_r$. Using similar arguments to those above, one can now recursively apply Eq.~\eqref{eqn:dynamics of zeta hat} to show that  $\hat{\zeta}_r(t-D_{\max}-1)$ can be determined from $\M_i$ given by Eq.~\eqref{eqn:memory of the alg t-1}, for all $r$ such that $r\rightarrow s$ (with $r\neq s$). Since $\hat{\zeta}_s(t-D_{\max}-1)\in\M_i$, we see from Eq.~\eqref{eqn:dynamics of zeta hat} that $\hat{\zeta}_s(t-D_{\max})$ can be determined from $\M_i$ given by Eq.~\eqref{eqn:memory of the alg t-1}. Thus, we have shown that $\hat{\zeta}_s(t-D_{\max})$ can be added to the memory of Algorithm~\ref{algorithm:control design} in line~12, for all $s\in\CR(\T_i)$.

Combining all the above arguments together and noting line~14 in Algorithm~\ref{algorithm:control design}, we see that at the beginning of iteration $(t+1)$ of the for loop in lines~4-14 of Algorithm~\ref{algorithm:control design}, the memory $\M_i$ of the algorithm satisfies  
\begin{equation}
\label{eqn:memory of the alg t}
\M_i=\big\{\hat{\zeta}_s(k):k\in\{t-2D_{\max},\dots,t-D_{ij}\},s\in\CL(\T_i),j\in\V,s_j(0)=s\big\}\cup\{\hat{\zeta}_s(t-D_{\max}):s\in\CR(\T_i)\}.
\end{equation}
This completes the induction step for the proof of Eq.~\eqref{eqn:memory of the alg t-1}, and thus completes the proof of part~(a).

We then prove part~(b). Consider any $t\in\{0,1,\dots,T-1\}$. In order to prove part~(b), it suffices for us to show that $\hat{\zeta}_r(t)$ for all $r\ni i$ can be determined using Eq.~\eqref{eqn:dynamics of zeta hat} and the memory $\M_i$ after line~12 (and before line~14) in iteration $t$ of the for loop in lines~4-14 of Algorithm~\ref{algorithm:control design}, which is given by 
\begin{multline}
\label{eqn:memory of the alg t'}
\M_i=\big\{\hat{\zeta}_s(k):k\in\{t-2D_{\max}-1,\dots,t-D_{ij}\},s\in\CL(\T_i),j\in\V,s_j(0)=s\big\}\\\cup\{\hat{\zeta}_s(k):k\in\{t-D_{\max}-1,t-D_{\max}\},s\in\CR(\T_i)\}.
\end{multline}
Considering any $r\ni i$, one can show via the definition of $\CP(\U,\CH)$ in \eqref{eqn:def of info graph} and the definition of $\T_i$ that $r\in\T_i$. Again, we split our arguments into two cases: $r$ has a self loop, and $r$ does not have a self loop. 

First, suppose $r$ has a self loop. For the case when $r\in\CL(\T_i)$ (i.e., $r$ is an isolated node in $\CP(\U,\CH)$), we see that $\hat{\zeta}_r(t-D_{ij_r})\in\M_i$ with $\M_i$ given by Eq.~\eqref{eqn:memory of the alg t'}, where $j_r\in\V$ with $s_{j_r}(0)=r$. Noting that $i,j_r\in r$, we see from the definitions of $\CP(\U,\CH)$ in \eqref{eqn:def of info graph} that $j_r\rightarrow i$ in $\G(\A,\V)$ with $D_{ij_r}=0$. It follows that $\hat{\zeta}_r(t)\in\M_i$ with $\M_i$ given by Eq.~\eqref{eqn:memory of the alg t'}. Thus, we focus on the case when $r$ is not a leaf node, i.e., $r\in\CR(\T_i)$. We see from Eq.~\eqref{eqn:dynamics of zeta hat} that given $\hat{\zeta}_r(t-1)$, and $\hat{\zeta}_{r^{\prime}}(t-1)$ for all $r^{\prime}\rightarrow r$ (with $r^{\prime}\neq r$) in $\CP(\U,\CH)$, the state $\hat{\zeta}_r(t)$ can be determined. Again, let us consider any $r^{\prime}$ such that $r^{\prime}\rightarrow r$ in $\CP(\U,\CH)$, and denote $\CL_{r^{\prime}}=\{v^{\prime}\in\CL(\T_i):v^{\prime}\rightsquigarrow r^{\prime}\}$. Further considering any $v^{\prime}\in\CL_{r^{\prime}}$, and noting that $v^{\prime}\in\CL(\T_i)$, we have that $\hat{\zeta}_{v^{\prime}}(k)\in\M_i$ with $\M_i$ given by Eq.~\eqref{eqn:memory of the alg t'}, for all $k\in\{t-2D_{\max}-1,\dots,t-D_{ij_{v^{\prime}}}\}$, where $j_{v^{\prime}}\in\V$ and $s_{j_{v^{\prime}}}(0)=v^{\prime}$. Similarly to \eqref{eqn:distance relation 2}, we have that $D_{ij_{v^{\prime}}}\le l_{v^{\prime}r^{\prime}}+1\le D_{\max}$, which implies that 
\begin{equation}
\label{eqn:distance relation 5}
\begin{split}
&t-2D_{\max}-1\le t-D_{\max}-l_{v^{\prime}r^{\prime}}-1\\
&t-D_{ij_{v^{\prime}}}\ge t-l_{v^{\prime}r^{\prime}}-1.
\end{split}
\end{equation}
It then follows from \eqref{eqn:distance relation 5} that $\hat{\zeta}_{v^{\prime}}(k)\in\M_i$ with $\M_i$ given by Eq.~\eqref{eqn:memory of the alg t'}, for all $k\in\{t-D_{\max}-l_{v^{\prime}r^{\prime}}-1,\dots,t-l_{v^{\prime}r^{\prime}}-1\}$, and for all $v^{\prime}\in\CL_r$.
Using similar arguments to those before, one can recursively use Eq.~\eqref{eqn:dynamics of zeta hat} to show that $\hat{\zeta}_{r^{\prime}}(k)$ can be determined from $\M_i$ given by Eq.~\eqref{eqn:memory of the alg t'}, for all $k\in\{t-D_{\max}-1,\dots,t-1\}$. Moreover, recalling that $r\in\CR(\T_i)$ as we argued above, we see that $\hat{\zeta}_r(t-D_{\max}-1)\in\M_i$ with $\M_i$ given by Eq.~\eqref{eqn:memory of the alg t'}. One can then apply Eq.~\eqref{eqn:dynamics of zeta hat} multiple times and show that $\hat{\zeta}_r(t)$ can be determined from $\M_i$ given by Eq.~\eqref{eqn:memory of the alg t'}. Next, suppose $r$ does not have a self loop. 
Using similar arguments to those above for the case when $r$ has a self loop, one can show that $\hat{\zeta}_r(t)$ can be determined using Eq.~\eqref{eqn:dynamics of zeta hat} and the current memory $\M_i$ given in Eq.~\eqref{eqn:memory of the alg t'}.

Combining the above arguments together, we conclude that for any $t\in\{0,1,\dots,T-1\}$, the control input $\hat{u}_i(t)$ in line~13 can be determined using Eq.~\eqref{eqn:dynamics of zeta hat} and the memory $\M_i$ given by Eq.~\eqref{eqn:memory of the alg t'}. This completes the proof of part~(b).\hfill\qed

\section{Proofs for Perturbation Bounds on Solutions to Ricatti Equations}\label{sec:proofs of DARE bounds}
\subsection{Proof of Lemma~\ref{lemma:upper bounds on P_r hat and K_r hat self loop}}
Consider any $r\in\U$ that has a self loop. To show that \eqref{eqn:P_r hat and P_r self loop} holds under the assumption on $\varepsilon$ given in \eqref{eqn:upper bound on epsilon 1}, we use \cite[Proposition~2]{mania2019certainty}. Specifically, since $\norm{\hat{A}-A}\le\varepsilon$ and $\norm{\hat{B}-B}\le\varepsilon$ hold, one can show that $\norm{\hat{A}_{rr}-A_{rr}}\le\varepsilon$ and $\norm{\hat{B}_{rr}-B_{rr}}\le\varepsilon$ hold, for all $r\in\U$. Similarly, noting that $\norm{A_{rr}}\le\norm{A}$ and $\norm{B_{rr}}\le\norm{B}$ hold, for all $r\in\U$, one can show that $\norm{A_{rr}+B_{rr}K_r}\le\tilde{\Gamma}^2$. Moreover, note that $1\le\sigma_n(R)\le\sigma_{n_r}(R_{rr})\le\sigma_1(R_{rr})\le\sigma_1(R)$ and $1\le\sigma_m(Q)\le\sigma_{m_r}(Q_{rr})\le\sigma_1(Q_{rr})\le\sigma_1(Q)$ from Assumption~\ref{ass:cost matrices}, where $n_r\triangleq\sum_{i\in r}n_i$ and $m_r\triangleq\sum_{i\in r}m_i$. Recalling the definitions of $\kappa$, $\gamma$ and $\Gamma$, the proof of \eqref{eqn:P_r hat and P_r self loop} under the assumption on $\varepsilon$ given in \eqref{eqn:upper bound on epsilon 1} now follows from \cite[Proposition~2]{mania2019certainty}. 

Next, let denote
\begin{equation*}
\label{eqn:f(epsilon)}
f(\varepsilon)=6\frac{\kappa^2}{1-\gamma^2}\tilde{\Gamma}^5(1+\sigma_1(R^{-1}))\varepsilon,
\end{equation*}
and note that $\norm{P_r}\le\Gamma$. Moreover, we see from Eq.~\eqref{eqn:set of DARES K hat} that 
\begin{equation*}
\label{eqn:K_v hat}
\hat{K}_r= -(R_{rr}+\hat{B}_{rr}^{\top} \hat{P}_{r} \hat{B}_{rr}^{\top})^{-1} \hat{B}_{rr}^{\top} \hat{P}_{r} \hat{A}_{rr},
\end{equation*}
where $\norm{\hat{A}_{rr}-A_{rr}}\le\varepsilon$ and $\norm{\hat{B}_{rr}-B_{rr}}\le\varepsilon$. We have
\begin{equation*}
\norm{\hat{B}^{\top}_{rr}\hat{P}_{r}\hat{B}_{rr} - B^{\top}_{rr}P_{r}\hat{B}_{rr}}
\le\norm{\hat{B}^{\top}_{rr}\hat{P}_{rr}\hat{B}_{rr} - B^{\top}_{rr}\hat{P}_{r}\hat{B}_{rr}}+ \norm{B^{\top}_{rr}\hat{P}_{r}\hat{B}_{rr} - B^{\top}_{rr}P_{r}\hat{B}_{rr}} 
+ \norm{B^{\top}_{rr}P_{r}\hat{B}_{rr} - B^{\top}_{rr}P_{r}B_{rr}},
\end{equation*}
which implies that 
\begin{equation*}
\label{eqn:upper bound split 1}
\norm{\hat{B}^{\top}_{rr}\hat{P}_{r}\hat{B}_{rr} - B^{\top}_{rr}P_{r}B_{rr}}
\le\norm{\hat{P}_{r}}\norm{\hat{B}_{rr}}\varepsilon + \norm{B_{rr}}\norm{\hat{B}_{rr}}f(\varepsilon) + \norm{B_{rr}}\norm{P_r}\varepsilon.
\end{equation*}
Noting that $\varepsilon\le f(\varepsilon)\le\frac{1}{6}$ and recalling the definition of $\Gamma$ given in~\eqref{eqn:Gamma}, we have 
\begin{equation*}
\label{eqn:upper bound split 2}
\norm{\hat{B}^{\top}_{rr}\hat{P}_{r}\hat{B}_{rr} - B^{\top}_{rr}P_{r}B_{rr}}\le3\tilde{\Gamma}^2f(\varepsilon).
\end{equation*}
Similarly, one can show that 
\begin{equation*}
\label{eqn:upper bound split 3}
\norm{\hat{B}^{\top}_{rr}\hat{P}_{r}\hat{A}_{rr} - B^{\top}_{rr}P_{r}A_{rr}}\le3\tilde{\Gamma}^2f(\varepsilon).
\end{equation*}
Now, following similar arguments to those in the proof of \cite[Lemma~2]{mania2019certainty}, one can show that 
\begin{equation*}
\label{eqn:K_v hat and K_v}
\norm{\hat{K}_r-K_r}\le3\tilde{\Gamma}^3f(\varepsilon)\le1,
\end{equation*}
where the second inequality follows from the assumption on $\varepsilon$ given in \eqref{eqn:upper bound on epsilon 1}. \hfill\qed

\subsection{Proof of Lemma~\ref{lemma:upper bounds on P_r and K_r general}}
First, let us consider any $r\in\U$ such that $l_{rs}=1$, i.e., $r\rightarrow s$. Since $s$ is the unique root node that is reachable from $r$, we see from Lemma~\ref{lemma:properties of info graph} and Remark~\ref{remark:tree info graph} that $s$ has a self loop. Noting that $\sigma_1(R)\ge1$ from Assumption~\ref{ass:cost matrices}, and that $\tilde{\Gamma}\ge1$, we see that any $\varepsilon$ satisfying \eqref{eqn:upper bound on epsilon 2} also satisfies \eqref{eqn:upper bound on epsilon 1}. Thus, we have from \eqref{eqn:P_r hat and P_r self loop} in Lemma~\ref{lemma:upper bounds on P_r hat and K_r hat self loop} that 
\begin{equation*}
\label{eqn:P_rv hat and P_rv}
\norm{\hat{P}_s-P_s}\le f(\varepsilon)\le\frac{1}{6},
\end{equation*}
where 
\begin{equation*}
\label{eqn:f(epsilon)}
f(\varepsilon)=6\frac{\kappa^2}{1-\gamma^2}\tilde{\Gamma}^5(1+\sigma_1(R^{-1}))\varepsilon,
\end{equation*}
and we note that $\norm{P_s}\le\Gamma$. Moreover, we see from Eq.~\eqref{eqn:set of DARES K hat} that 
\begin{equation*}
\label{eqn:K_v hat}
\hat{K}_r= -(R_{rr}+\hat{B}_{sr}^{\top} \hat{P}_{s} \hat{B}_{sr}^{\top})^{-1} \hat{B}_{sr}^{\top} \hat{P}_{s} \hat{A}_{sr},
\end{equation*}
where $\norm{\hat{A}_{sr}-A_{rr}}\le\varepsilon$ and $\norm{\hat{B}_{sr}-B_{sr}}\le\varepsilon$ (since $\norm{\hat{A}-A}\le\varepsilon$ and $\norm{\hat{B}-B}\le\varepsilon$). Now, using similar arguments to those in the proof of Lemma~\ref{lemma:upper bounds on P_r hat and K_r hat self loop}, one can also show that 
\begin{equation*}
\label{eqn:upper bound split 2}
\norm{\hat{B}^{\top}_{sr}\hat{P}_{s}\hat{B}_{sr} - B^{\top}_{sr}P_{s}B_{sr}}\le3\tilde{\Gamma}^2f(\varepsilon),
\end{equation*}
and
\begin{equation*}
\label{eqn:upper bound split 3}
\norm{\hat{B}^{\top}_{sr}\hat{P}_{s}\hat{A}_{sr} - B^{\top}_{sr}P_{s}A_{sr}}\le3\tilde{\Gamma}^2f(\varepsilon).
\end{equation*}
Using similar arguments to those in the proof of \cite[Lemma~2]{mania2019certainty}, one can now show that 
\begin{equation}
\label{eqn:K_v hat and K_v}
\norm{\hat{K}_r-K_r}\le3\tilde{\Gamma}^3f(\varepsilon)\le1,
\end{equation}
where the second inequality follows from the assumption on $\varepsilon$ given in \eqref{eqn:upper bound on epsilon 2}, and we note that $\norm{K_r}\le\Gamma$. Hence, we have shown that \eqref{eqn:K_r hat and K_r general} holds for $l_{rs}=1$

To prove Eq.~\eqref{eqn:P_r hat and P_r general} for $l_{rs}=1$, we first recall the expressions for $P_r$ and $\hat{P}_r$ given in Eqs.~\eqref{eqn:set of DARES P} and \eqref{eqn:set of DARES P hat}, respectively. Using similar arguments to those above, we have
\begin{align}\nonumber
&\norm{\hat{A}_{sr}+\hat{B}_{sr}\hat{K}_r-A_{sr}-B_{sr}K_r}\\\nonumber
\le&\norm{\hat{A}_{sr}-A_{sr}}+\norm{\hat{B}_{sr}\hat{K}_r-B_{sr}\hat{K}_r}+\norm{B_{sr}\hat{K}_r-B_{sr}K_r}\\
\le&\varepsilon+(\Gamma+1)\varepsilon+\Gamma3\widetilde{\Gamma}^3 f(\varepsilon)\le4\tilde{\Gamma}^4f(\varepsilon),\label{eqn:upper bound split 4}
\end{align}
where the first inequality in \eqref{eqn:upper bound split 4} follows from \eqref{eqn:K_v hat and K_v} and the definition of $\Gamma$ given in~\eqref{eqn:Gamma}. To obtain the second inequality in \eqref{eqn:upper bound split 4}, we first note from Eq.~\eqref{eqn:set of DARES P} that $P_r\succeq Q_{rr}\succeq\sigma_m(Q)I\succeq I$, where the last inequality follows from Assumption~\ref{ass:cost matrices}. We then have from~\eqref{eqn:Gamma} that $\Gamma\ge\norm{P_r}\ge1$, which further implies that $\tilde{\Gamma}+1\le\tilde{\Gamma}^4$. It follows that the second inequality in \eqref{eqn:upper bound split 4} holds. To proceed, denoting $\hat{L}_{sr}=\hat{A}_{sr}+\hat{B}_{sr}\hat{K}_r$ and $L_{sr}=A_{sr}+B_{sr}K_r$, we have from Eqs.~\eqref{eqn:set of DARES P} and \eqref{eqn:set of DARES P hat} that 
\begin{equation}
\label{eqn:P_v hat minus P_v}
\norm{\hat{P}_r-P_r}\le\norm{\hat{K}_r^{\top}R_{rr}\hat{K}_r - K_r^{\top}R_{rr}K_r}+\norm{\hat{L}_{sr}^{\top}\hat{P}_{s}\hat{L}_{sr} - L_{sr}^{\top}P_{s}L_{sr}},
\end{equation}
where we note that $\norm{L_{sr}}\le\tilde{\Gamma}^2$. From the above arguments, we have the following:
\begin{align}\nonumber
\norm{\hat{K}_r^{\top}R_{rr}\hat{K}_r-K^{\top}_rR_{rr}K_r}&\le\norm{\hat{K}_r^{\top}R_{rr}\hat{K}_r-K_r^{\top}R_{rr}\hat{K}_r}+\norm{K_r^{\top}R_{rr}\hat{K}_r-K_r^{\top}R_{rr}K_r}\\\nonumber
&\le3\tilde{\Gamma}^3f(\varepsilon)\sigma_1(R)(\Gamma+3\tilde{\Gamma}^3f(\varepsilon))+\Gamma\sigma_1(R)3\tilde{\Gamma}^3f(\varepsilon)\\\nonumber
&\le6\tilde{\Gamma}^4\sigma_1(R)f(\varepsilon)+9\tilde{\Gamma}^6\sigma_1(R)f(\varepsilon)^2,
\end{align}
and 
\begin{align}\nonumber
&\norm{\hat{L}_{sr}^{\top}\hat{P}_{s}\hat{L}_{sr} - L_{sr}^{\top}P_{s}L_{sr}}\\\nonumber
\le&\norm{\hat{L}_{sr}^{\top}\hat{P}_{s}\hat{L}_{sr}-L_{sr}^{\top}\hat{P}_{s}\hat{L}_{sr}}+\norm{L_{sr}^{\top}\hat{P}_{s}\hat{L}_{sr}-L_{sr}^{\top}P_{s}\hat{L}_{sr}}+\norm{L_{sr}^{\top}P_{s}\hat{L}_{sr}-L_{sr}^{\top}P_{s}L_{sr}}\\\nonumber
\le&4\tilde{\Gamma}^4f(\varepsilon)(\Gamma+1)(\tilde{\Gamma}^2+4\tilde{\Gamma}^4f(\varepsilon))+\tilde{\Gamma}^2f(\varepsilon)(\tilde{\Gamma}^2+4\tilde{\Gamma}^4f(\varepsilon))+\tilde{\Gamma}^2\Gamma 4\tilde{\Gamma}^4f(\varepsilon)\\\nonumber
\le&4\tilde{\Gamma}^7f(\varepsilon)+16\tilde{\Gamma}^9f(\varepsilon)^2+\tilde{\Gamma}^4f(\varepsilon)+4\tilde{\Gamma}^6f(\varepsilon)^2+4\tilde{\Gamma}^7f(\varepsilon).
\end{align}
Noting that $f(\varepsilon)\le\frac{1}{6}$, one can now obtain from \eqref{eqn:P_v hat minus P_v} that
\begin{equation}
\label{eqn:P_v hat minus P_v 1}
\norm{\hat{P}_r-P_r}\le20\tilde{\Gamma}^9\sigma_1(R)f(\varepsilon)\le\frac{1}{6},
\end{equation}
where the second inequality again follows from the assumption on  $\varepsilon$ given in \eqref{eqn:upper bound on epsilon 2}.

Next, let us consider any $r\in\U$ such that $l_{sr}=2$ (where $s\in\U$ is the unique root node that is reachable from $r$), and denote the unique directed path from $r$ to $s$ in $\CP(\U,\CH)$ as $r\rightarrow r_1\rightarrow s$. We see that $r_1\in\U$ satisfies \eqref{eqn:K_v hat and K_v} and \eqref{eqn:P_v hat minus P_v 1}. Repeating the above arguments for obtaining \eqref{eqn:K_v hat and K_v} and \eqref{eqn:P_v hat minus P_v 1} one more time, one can show that 
\begin{equation*}
\label{eqn:K_v hat and K_v l=2}
\norm{\hat{K}_r-K_r}\le3\tilde{\Gamma}^320\tilde{\Gamma}^9\sigma_1(R)f(\varepsilon)\le1,
\end{equation*}
and
\begin{equation*}
\label{eqn:P_v hat minus P_v l=2}
\norm{\hat{P}_r-P_r}\le(20\tilde{\Gamma}^9\sigma_1(R))^2f(\varepsilon)\le\frac{1}{6},
\end{equation*}
where we use again the assumption on $\varepsilon$ given by \eqref{eqn:upper bound on epsilon 2}. Further repeating the above arguments, and noting from Eq.~\eqref{eqn:depth of T_i} and the definition of $\CP(\U,\CH)$ in \eqref{eqn:def of info graph} that $l_{sr}\le D_{\max}$ for all $s,r\in\U$, one can show that \eqref{eqn:K_r hat and K_r general}-\eqref{eqn:P_r hat and P_r general} hold, for all $r\in\U$ without a self loop, under the assumption on $\varepsilon$ given in \eqref{eqn:upper bound on epsilon 2}.\hfill\qed

\section{Proofs for Perturbation Bounds on Costs}\label{sec:proofs of bounds on costs}
\subsection{Proof of Lemma~\ref{lemma:upper bound on zeta_tilde}}
First, let us consider any $s\in\U$ that has a self loop. Noting the construction of the information graph $\CP=(\U,\CH)$ given in \eqref{eqn:def of info graph}, one can show that Eq.~\eqref{eqn:dynamics of zeta_tilde} can be rewritten as
\begin{equation}
\label{eqn:dynamics of zeta_tilde 1 self loop}
\tilde{\zeta}_s(t+1)=(A_{ss}+B_{ss}\hat{K}_s)\tilde{\zeta}_s(t)+\sum_{v\in\CL_s}H(v,s)\sum_{w_j\rightarrow v}I_{v,\{j\}}w_j(t-l_{vs}),
\end{equation}
where $\CL_s=\{v\in\CL:v\rightsquigarrow s\}$ is the set of leaf nodes in $\CP(\U,\CH)$ that can reach $s$, $l_{vs}$ is the length of the (unique) directed path from node $v$ to node $s$ in $\CP(\U,\CH)$ with $l_{vs}=0$ if $v=s$, and
\begin{equation*}
\label{eqn:H(v,s)}
H(v,s) \triangleq (A_{sr_1}+B_{sr_1}\hat{K}_{r_1})\cdots(A_{r_{l_{vs}-1}v}+B_{r_{l_{vs}-1}v}\hat{K}_v),
\end{equation*}
with $H(v,s)=I$ if $v=s$, where $\hat{K}_r$ is given by Eq.~\eqref{eqn:set of DARES K} for all $r\in\U$, and $v,r_{l_{vs}-1},\dots,r_1,s$ are the nodes along the directed path from $v$ to $s$ in $\CP(\U,\CH)$. Recalling from Eq.~\eqref{eqn:dynamics of zeta_tilde} that $\tilde{\zeta}_s(0)=\sum_{w_i\rightarrow s}I_{s,\{i\}}x_i(0)=0$, in Eq.~\eqref{eqn:dynamics of zeta_tilde 1 self loop} we set $w_j(t-l_{vs})=0$ if $t-l_{vs}<0$. Now, under the assumption on $\varepsilon$ given in Eq.~\eqref{eqn:upper bound on epsilon 2}, we see from \eqref{eqn:K_r hat and K_r general} in Lemma~\ref{lemma:upper bounds on P_r and K_r general} that $\norm{\hat{K}_r}\le\tilde{\Gamma}$, which implies that $\norm{A_{sr}+B_{sr}\hat{K}_r}\le\tilde{\Gamma}^2$, for all $r\in\U$ with $r\neq s$. Noting that $l_{vs}\le D_{\max}$ from the construction of $\CP(\U,\CH)$, we have $\norm{H(v,s)}\le\tilde{\Gamma}^{2D_{\max}}$, for all $v\in\CL_s$. Considering any $t\in\Z_{\ge0}$ and denoting 
\begin{equation}
\label{eqn:eta_s}
\eta_s(t)=\sum_{v\in\CL_s}H(v,s)\sum_{w_j\rightarrow v}I_{v,\{j\}}w_j(t-l_{vs}),
\end{equation}
we have
\begin{equation*}
\E\Big[\eta_s(t)\eta_s(t)^{\top}\Big]=\E\Big[\sum_{v\in\CL_s}\sum_{w_j\rightarrow v}H(v,s)I_{v,\{j\}}w_j(t-l_{vs})w_j(t-l_{vs})^{\top}I_{\{j\},v}H(v,s)^{\top}\Big],
\end{equation*}
where we use the fact from $w(t)\sim\CN(0,\sigma_w^2I)$ that $w_{j_1}(t)$ and $w_{j_2}(t)$ are independent for all $j_1,j_2\in\V$ with $j_1\neq j_2$, and the fact that for any $v\in\U$ with $s_j(0)=v$, $w_j$ is the only noise term such that $w_j\rightarrow v$ (see Footnote~2). Moreover, we see that $\eta_s(t_1)$ and $\eta_s(t_2)$ are independent for all $t_1,t_2\in\Z_{\ge0}$ with $t_1\neq t_2$, and that $\eta_s(t)$ is independent of $\tilde{\zeta}_s(t)$ for all $t\in\Z_{\ge0}$. Now, considering any $k\in\Z_{\ge0}$ such that $k-l_{vs}\ge0$ for all $v\in\CL_s$, and noting that $w(t)\sim\CN(0,\sigma_w^2I)$ for all $t\in\Z_{\ge0}$, we have
\begin{equation}
\label{eqn:cov of eta_s}
\E\Big[\eta_s(k)\eta_s(k)^{\top}\Big]=\sigma_w^2\sum_{v\in\CL_s}\sum_{w_j\rightarrow v}H(v,s)I_{v,\{j\}}I_{\{j\},v}H(v,s)^{\top}.
\end{equation}
Let us denote the right-hand side of Eq.~\eqref{eqn:cov of eta_s} as $\bar{W}_s$, and denote
\begin{equation*}
\label{eqn:W_bar s and L_tilde ss}
\tilde{L}_{ss}=A_{ss}+B_{ss}\hat{K}_s.
\end{equation*}
Fixing any $\tau\in\Z_{\ge1}$ such that $\tau-l_{vs}\ge0$ for all $v\in\CL_s$, and considering any $t\ge\tau$, one can then unroll Eq.~\eqref{eqn:dynamics of zeta_tilde 1 self loop} and show that
\begin{align}
\E\Big[\tilde{\zeta}_s(t)\tilde{\zeta}_s(t)^{\top}\Big]= \tilde{L}_{ss}^{t-\tau}\E\Big[\tilde{\zeta}_s(\tau)\tilde{\zeta}_s(\tau)^{\top}\Big](\tilde{L}_{ss}^{\top})^{t-\tau}+\sum_{k=0}^{t-\tau-1}L_{ss}^k\bar{W}_s(\tilde{L}_{ss}^{\top})^k.\label{eqn:cov of zeta_s tilde}
\end{align}
Under the assumption on $\varepsilon$ given in \eqref{eqn:upper bound on epsilon 2}, one can obtain from Lemma~\ref{lemma:upper bounds on P_r hat and K_r hat self loop} that $\norm{\tilde{L}_{ss}^k}\le\kappa(\frac{\gamma+1}{2})^k$ for all $k\ge0$, where $0<\frac{\gamma+1}{2}<1$, which implies that $\tilde{L}_{ss}$ is stable. It follows that
\begin{align*}
\lim_{t\to\infty}\E\Big[\tilde{\zeta}_s(t)\tilde{\zeta}_s(t)^{\top}\Big]&= \lim_{t\to\infty}\sum_{k=0}^{t-\tau-1}L_{ss}^k\bar{W}_s(\tilde{L}_{ss}^{\top})^k\\
&\preceq\norm{\bar{W}_s}\lim_{t\to\infty}\sum_{k=0}^{k-0-1}\norm{\tilde{L}_{ss}^k}\norm{(\tilde{L}_{ss}^{\top})^k}I\\
&\preceq\frac{4\norm{\bar{W}_s}\kappa^2}{1-\gamma^2}I.
\end{align*}
Noting that $|\CL_s|\le p$ from the definition of $\CP(\U,\CH)$ given in \eqref{eqn:def of info graph}, and that for any $v\in\U$ with $s_j(0)=v$, $w_j$ is the only noise term such that $w_j\rightarrow v$, as we argued above, we have from Eq.~\eqref{eqn:cov of eta_s} that
\begin{align*}
\norm{\bar{W}_s}\le\sigma_w^2 p\max_{v\in\CL_s}\norm{H(v,s)}^2\le\sigma_w^2p\tilde{\Gamma}^{4D_{\max}},
\end{align*}
where the second inequality follows from the fact that $\norm{H(v,s)}\le\tilde{\Gamma}^{2D_{\max}}$ as we argued above. It then follows that \eqref{eqn:upper bound on cov zeta_tilde} holds.

Next, let us consider any $s\in\U$ that does not have a self loop. Similarly to Eq.~\eqref{eqn:dynamics of zeta_tilde 1 self loop}, one can rewrite Eq.~\eqref{eqn:dynamics of zeta_tilde} as
\begin{equation*}
\label{eqn:dynamics of zeta_tilde 1 no self loop}
\tilde{\zeta}_s(t+1) = \sum_{v\in\CL_s}H(v,s)\sum_{w_j\rightarrow v}I_{v,\{j\}}w_j(t-l_{vs}).
\end{equation*}
Using similar arguments to those above, one can show that \eqref{eqn:upper bound on cov zeta_tilde} also holds.\hfill\qed

\subsection{Proof of Proposition~\ref{prop:J_tilde minus J}}
First, since $\varepsilon$ satisfies \eqref{eqn:upper bound on epsilon 2} (and thus \eqref{eqn:upper bound on epsilon 1}), we have from \eqref{eqn:K_r hat stabilizable} in Lemma~\ref{lemma:upper bounds on P_r and K_r general} that $A_{ss}+B_{ss}\hat{K}_s$ is stable for any $s\in\U$ that has a self loop. Now, using similar arguments to those for the proofs of Theorem~2 and Corollary~4 in \cite{lamperski2015optimal}, and leveraging Lemma~\ref{lemma:zeta_s1 and zeta_s2 indept} and Eqs.~\eqref{eqn:u tilde}-\eqref{eqn:dynamics of zeta_tilde}, \eqref{eqn:set of DARES K hat} and \eqref{eqn:set of DARES P tilde}, one can show that Eq.~\eqref{eqn:exp for J_tilde} holds. To proceed, for any $t\in\Z_{\ge0}$ and for any $T\ge t$ and, we set $T^{\prime}=T+\lceil\frac{\varphi}{J_{\star}}T\rceil$, and define
\begin{equation}
\label{eqn:J(x_tilde)}
J_T(\tilde{x}(t))=\E\Big[\sum_{k=t}^{T^{\prime}-1}(x(k)^{\top}Qx(k)+u^{\star}(k)^{\top}Ru^{\star}(k))\Big],
\end{equation}
where $u^{\star}(k)=\sum_{s\in\U}I_{\V,s}K_s\zeta_s(k)$ is the optimal control policy given by Eq.~\eqref{eqn:exp for u^star}, for all $i\in\V$ and for all $k\ge t$, and where $K_r$ and $\zeta_r(k)$ are given by Eqs.~\eqref{eqn:set of DARES K} and \eqref{eqn:dynamics of zeta}, respectively, for all $r\in\U$. Moreover, on the right-hand side of Eq.~\eqref{eqn:J(x_tilde)}, we set $x(t)=\tilde{x}(t)$, where $\tilde{x}(t)$ given by Eq.~\eqref{eqn:state x tilde} is the state after applying the control policy $\tilde{u}(k)$ in Eq.~\eqref{eqn:u tilde} for $k\in\{0,\dots,t-1\}$. Noting that $\tilde{x}(0)=x(0)$ as we discussed at the beginning of Section~\ref{sec:sub guarantees}, and that $T+\frac{\varphi}{J_{\star}}T\le T^{\prime}\le T+\frac{\varphi}{J_{\star}}T+1$, we see that
\begin{align*}
\lim_{T\to\infty}\frac{1}{T}J_T(\tilde{x}(0))&=\lim_{T\to\infty}\big(\frac{T^{\prime}}{T}\frac{1}{T^{\prime}}J_T(\tilde{x}(0))\big)\\
&=\lim_{T\to\infty}\frac{T^{\prime}}{T}\big(\lim_{T\to\infty}\frac{1}{T^{\prime}}J_T(\tilde{x}(0))\big)\\
&\le(1+\frac{\varphi}{J_{\star}})J_{\star}=J_{\star}+\varphi,
\end{align*}
where we use the fact that $\lim_{T\to\infty}\frac{1}{T^{\prime}}J_T(\tilde{x}(0))=J_{\star}$.

Recalling the definition of $\tilde{J}$ in Eq.~\eqref{eqn:J tilde}, we denote $\tilde{c}(t)=\tilde{x}(t)^{\top}Q\tilde{x}(t)+\tilde{u}(t)^{\top}R\tilde{u}(t)$. We then have the following:
\begin{align}\nonumber
\tilde{J}&=\lim_{T\to\infty}\frac{1}{T}\E\Big[\sum_{t=0}^{T-1}\tilde{c}(t)\Big]\\\nonumber
&=\lim_{T\to\infty}\frac{1}{T}\bigg(\E\Big[\sum_{t=0}^{T-1}\big(\tilde{c}(t)-J_T(\tilde{x}(t))\big)+\sum_{t=0}^{T-1}J_T(\tilde{x}(t))\Big]\bigg)\\\nonumber
&=\lim_{T\to\infty}\frac{1}{T}\bigg(\E\Big[\sum_{t=0}^{T-2}\big(\tilde{c}(t)-J_T(\tilde{x}(t))\big)+\sum_{t=0}^{T-2}J_T(\tilde{x}(t+1))+J_T(\tilde{x}(0))\Big]\bigg)\\\nonumber
&=\lim_{T\to\infty}\frac{1}{T}\bigg(\E\Big[\sum_{t=0}^{T-2}\big(\tilde{c}(t)+J_T(\tilde{x}(t+1))-J_T(\tilde{x}(t))\big)+J_T(\tilde{x}(0))\Big]\bigg).
\end{align}
Using similar arguments to those for the proof of \cite[Theorem~2]{lamperski2015optimal}, one can show that 
\begin{multline}
\label{eqn:J(x_t+1) minus J(x_t)}
J_T(\tilde{x}(t+1))-J_T(\tilde{x}(t))=\E\Big[\sum_{r\in\U}\tilde{\zeta}_r(t+1)^{\top}P_r(t+1)\tilde{\zeta}_r(t+1)\Big]\\-\E\Big[\sum_{r\in\U}\tilde{\zeta}_r(t)^{\top}P_r(t)\tilde{\zeta}_r(t)\Big]
-\sigma_w^2\sum_{\substack{i\in\V\\ w_i\rightarrow r}}\tr\big(I_{\{i\},r}P_r(t+1)I_{r,\{i\}}\big),
\end{multline}
where $\tilde{\zeta}_r(k)=\sum_{w_i\to r}I_{r,\{i\}}\tilde{x}_i(k)$ for $k\in\{t,t+1\}$. To obtain $P_r(t)$ for all $t\in\{0,\dots,T\}$ and for all $r\in\U$ in Eq.~\eqref{eqn:J(x_t+1) minus J(x_t)}, we use the following recursion:
\begin{equation}
\label{eqn:recursive DARES P}
P_r(k)=Q_{rr}+K_r^{\top}R_{rr}K_r+(A_{sr}+B_{sr}K_r)^{\top}P_s(k+1)(A_{sr}+B_{sr}K_r),
\end{equation}
initialized with $P_r(T^{\prime})=0$ for all $r\in\U$, where $T^{\prime}=T+\lceil\frac{\varphi}{J_{\star}}T\rceil$, and for each $r\in\U$, we let $s\in\U$ be the unique node such that $r\rightarrow s$, and $K_r$ is given by Eq.~\eqref{eqn:set of DARES K} for all $r\in\U$. Combining the above arguments together, we obtain the following:
\begin{align}\nonumber
\tilde{J}-J_{\star}&=\lim_{T\to\infty}\frac{1}{T}\E\Big[\sum_{t=0}^{T-1}\Big(\tilde{c}(t)+\sum_{r\in\U}\tilde{\zeta}_r(t+1)^{\top}P_r(t+1)\tilde{\zeta}_r(t+1)-\sum_{r\in\U}\tilde{\zeta}_r(t)^{\top}P_r(t)\tilde{\zeta}_r(t)\\\nonumber
&\qquad\qquad\qquad\qquad\qquad\qquad\quad-\sigma_w^2\sum_{\substack{i\in\V\\ w_i\rightarrow r}}\tr\big(I_{\{i\},r}P_r(t+1)I_{r,\{i\}}\big)\Big)\Big]+\lim_{T\to\infty}\frac{1}{T}J_T(\tilde{x}(0))-J_{\star}\\\nonumber
&\le\lim_{T\to\infty}\frac{1}{T}\E\Big[\sum_{t=0}^{T-1}\sum_{r\in\U}\Big(\tilde{\zeta}_r(t)^{\top}(Q_{rr}+\hat{K}_r^{\top}R_{rr}\hat{K}_r)\tilde{\zeta}_r(t)+\tilde{\zeta}_r(t+1)^{\top}P_r(t+1)\tilde{\zeta}_r(t+1)\\
&\qquad\qquad\qquad\qquad\qquad\qquad-\tilde{\zeta}_r(t)^{\top}P_r(t)\tilde{\zeta}_r(t)-\sigma_w^2\sum_{\substack{i\in\V\\ w_i\rightarrow r}}\tr\big(I_{\{i\},r}P_r(t+1)I_{r,\{i\}}\big)\Big)\Big]+\varphi\label{eqn:J_tilde minus J 2}\\\nonumber
&=\lim_{T\to\infty}\frac{1}{T}\E\Big[\sum_{t=0}^{T-1}\sum_{r\in\U}\Big(\tilde{\zeta}_r(t)^{\top}\big(Q_{rr}+\hat{K}_r^{\top}R_{rr}\hat{K}_r-P_r(t)\big)\tilde{\zeta}_r(t)\\
&\qquad\qquad\qquad\qquad\qquad+\sum_{v\to r}\tilde{\zeta}_v^{\top}(t)(A_{rv}+B_{rv}\hat{K}_v)^{\top}P_r(t+1)(A_{rv}+B_{rv}\hat{K}_v)\tilde{\zeta}_v(t)\Big)\Big]+\varphi\label{eqn:J_tilde minus J 3}\\\nonumber
&=\lim_{T\to\infty}\frac{1}{T}\E\Big[\sum_{t=0}^{T-1}\sum_{r\in\U}\Big(\tilde{\zeta}_r(t)^{\top}\big(Q_{rr}+\hat{K}_r^{\top}R_{rr}\hat{K}_r-P_r(t)\big)\tilde{\zeta}_r(t)\\
&\qquad\qquad\qquad\qquad\qquad\quad+\tilde{\zeta}_r^{\top}(t)\big((A_{sr}+B_{sr}\hat{K}_r)^{\top}P_s(t+1)(A_{sr}+B_{sr}\hat{K}_r)\big)\tilde{\zeta}_r(t)\Big)\Big]+\varphi,\label{eqn:J_tilde minus J 4}
\end{align}
where for each $r\in\U$ in Eq.~\eqref{eqn:J_tilde minus J 4}, we let $s\in\U$ be the unique node such that $r\rightarrow s$. To obtain Eq.~\eqref{eqn:J_tilde minus J 2}, we note from Lemma~\ref{lemma:zeta_s1 and zeta_s2 indept} that $\tilde{x}(t)=\sum_{r\in\U}I_{\V,r}\tilde{\zeta}_r(t)$ for all $t\in\Z_{\ge0}$, where $\E[\tilde{\zeta}_r(t)]=0$ for all $r\in\U$, and $\tilde{\zeta}_{r_1}(t)$ and $\tilde{\zeta}_{r_2}(t)$ are independent for all $r_1,r_2\in\U$ with $r_1\neq r_2$. Moreover, we note from Eq.~\eqref{eqn:u tilde} that $\tilde{u}(t)=\sum_{r\in\U}I_{\V,s}\hat{K}_r\tilde{\zeta}_r(t)$ for all $t\in\Z_{\ge0}$, where $\hat{K}_r$ is given by Eq.~\eqref{eqn:set of DARES K hat}. Combining the above arguments together, and recalling that $\tilde{c}_t=\tilde{x}(t)^{\top}Q\tilde{x}(t)+\tilde{u}(t)^{\top}R\tilde{u}(t)$, we obtain Eq.~\eqref{eqn:J_tilde minus J 2}. To obtain Eq.~\eqref{eqn:J_tilde minus J 3}, we first apply Eq.~\eqref{eqn:dynamics of zeta_tilde} and notice $w(t)\sim\CN(0,\sigma_w^2I)$. Next, we use the facts that the information graph $\CP(\U,\CH)$ defined in \eqref{eqn:def of info graph} is a tree (see Lemma~\ref{lemma:properties of info graph} and Remark~\ref{remark:tree info graph}), and that $\tilde{\zeta}_{r_1}(t)$ and $\tilde{\zeta}_{r_2}(t)$ are independent for all $r_1,r_2\in\U$ with $r_1\neq r_2$ and for all $t\in\Z_{\ge0}$, as we argued above. To obtain Eq.~\eqref{eqn:J_tilde minus J 4}, we leverage again the tree structure of $\CP(\U,\CH)$. 

Now, leveraging the recursion in Eq.~\eqref{eqn:recursive DARES P}, and using similar arguments to those for the proof of \cite[Lemma~12]{fazel2018global}, one can show via \eqref{eqn:J_tilde minus J 4} that 
\begin{align*}
\tilde{J}-J_{\star}&\le\lim_{T\to\infty}\frac{1}{T}\E\Big[\sum_{t=0}^{T-1}\sum_{r\in\U}\Big(\tilde{\zeta}_r(t)^{\top}(\hat{K}_r-K_r)^{\top}\big(R_{rr}+B_{sr}^{\top}P_s(t+1)B_{sr}\big)(\hat{K}_r-K_r)\tilde{\zeta}_r(t)\\
&\qquad\qquad+2\tilde{\zeta}_r(t)^{\top}(\hat{K}_r-K_r)^{\top}\big((R_{rr}+B_{sr}^{\top}P_s(t+1)B_{sr})K_r+B_{sr}^{\top}P_s(t+1)A_{sr}\big)\tilde{\zeta}_r(t)\Big)\Big]+\varphi.
\end{align*}
Recall from Lemma~\ref{lemma:opt solution} that $A_{ss}+B_{ss}K_s$ is stable for any $s\in\U$ that has a self loop. Using similar arguments to those for the proof of \cite[Corollary~4]{lamperski2015optimal}, one can show via Eq,~\eqref{eqn:recursive DARES P} that $P_r(t)\to P_r$ as $T\to\infty$, for all $t\in\{0,\dots,T\}$ and for all $r\in\U$, where $P_r$ is given by Eq.~\eqref{eqn:set of DARES P}. It then follows that
\begin{align}\nonumber
\tilde{J}-J_{\star}&\le\lim_{T\to\infty}\frac{1}{T}\E\Big[\sum_{t=0}^{T-1}\sum_{r\in\U}\Big(\tilde{\zeta}_r(t)^{\top}(\hat{K}_r-K_r)^{\top}\big(R_{rr}+B_{sr}^{\top}P_sB_{sr}\big)(\hat{K}_r-K_r)\tilde{\zeta}_r(t)\\\nonumber
&\qquad\qquad\quad+2\tilde{\zeta}_r(t)^{\top}(\hat{K}_r-K_r)^{\top}\big((R_{rr}+B_{sr}^{\top}P_sB_{sr})K_r+B_{sr}^{\top}P_sA_{sr}\big)\tilde{\zeta}_r(t)\Big)\Big]+\varphi\\\nonumber
&=\lim_{T\to\infty}\frac{1}{T}\E\Big[\sum_{t=0}^{T-1}\sum_{r\in\U}\Big(\tilde{\zeta}_r(t)^{\top}(\hat{K}_r-K_r)^{\top}\big(R_{rr}+B_{sr}^{\top}P_sB_{sr}\big)(\hat{K}_r-K_r)\tilde{\zeta}_r(t)\Big)+\varphi\\
&=\tr\Big(\sum_{r\in\U}(\hat{K}_r-K_r)^{\top}\big(R_{rr}+B_{sr}^{\top}P_sB_{sr}\big)(\hat{K}_r-K_r)\lim_{t\to\infty}\E\Big[\tilde{\zeta}_r(t)\tilde{\zeta}_r(t)^{\top}\Big]\Big)+\varphi,\label{eqn:J_tilde minus J 5}
\end{align}
where the equality follows from Eq.~\eqref{eqn:set of DARES K}, and the second equality follows from the fact that the limit $\lim_{t\to\infty}\E\big[\tilde{\zeta}_r(t)\tilde{\zeta}_r(t)^{\top}\big]$ exists, for all $r\in\U$, as we argued in the proof of Lemma~\ref{lemma:upper bound on zeta_tilde}.

Finally, substituting \eqref{eqn:upper bound on cov zeta_tilde} in Lemma~\ref{lemma:upper bound on zeta_tilde} into the right-hand side of ~\eqref{eqn:J_tilde minus J 5}, we obtain
\begin{align}\nonumber
\tilde{J}-J_{\star}&\le\frac{4p\sigma_w^2\tilde{\Gamma}^{4D_{\max}}\kappa^2}{1-\gamma^2}\tr\Big(\sum_{r\in\U}(K_r-\hat{K}_r)^{\top}(R_{rr}+B_{sr}^{\top}P_sB_{sr})(K_r-\hat{K}_r)\Big)+\varphi\\\nonumber
&\le\frac{4p\sigma_w^2\tilde{\Gamma}^{4D_{\max}}\kappa^2}{1-\gamma^2}(\Gamma^3+\sigma_1(R))\tr\Big(\sum_{r\in\U}(K_r-\hat{K}_r)^{\top}(K_r-\hat{K}_r)\Big)+\varphi\\\nonumber
&\le\frac{4p\sigma_w^2\tilde{\Gamma}^{4D_{\max}}\kappa^2}{1-\gamma^2}(\Gamma^3+\sigma_1(R))\sum_{r\in\U}\min\{m_r,n_r\}\norm{K_r-\hat{K}_r}^2+\varphi\\
&\le\frac{72\kappa^4\sigma_w^2pqn}{(1-\gamma^2)^2}\tilde{\Gamma}^{4D_{\max}+8}(\Gamma^3+\sigma_1(R))(1+\sigma_1(R^{-1}))(20\tilde{\Gamma}^9\sigma_1(R))^{D_{\max}}\varepsilon+\varphi,\label{eqn:upper bound on J_tilde minus J 1}
\end{align}
where the third inequality follows from the fact that $K_r,\hat{K}_r\in\R^{n_r\times m_r}$ for all $r\in\U$, with $n_r\triangleq\sum_{i\in r}n_i$ and $m_r\triangleq\sum_{i\in r}m_i$. To obtain \eqref{eqn:upper bound on J_tilde minus J 1}, we first note that $\varepsilon$ is assumed to satisfy \eqref{eqn:upper bound on epsilon 2} (and thus \eqref{eqn:upper bound on epsilon 1}). Recalling $|\U|=q$, and $n_i\ge m_i$ for all $i\in\V$ as we assumed previously, we then obtain \eqref{eqn:upper bound on J_tilde minus J 1} from Lemmas~\ref{lemma:upper bounds on P_r hat and K_r hat self loop}-\ref{lemma:upper bounds on P_r and K_r general}, where we also use the fact that $\norm{\hat{K}_r-K_r}\le1$ for all $r\in\U$.\hfill\qed

\subsection{Proof of Lemma~\ref{lemma:upper bound on state norms}}
First, considering any $s\in\U$ and any $t\in\Z_{\ge0}$, we have 
\begin{align}\nonumber
\E\Big[\norm{\tilde{\zeta}_s(t)}^2\Big]&=\E\Big[\tilde{\zeta}_s(t)^{\top}\tilde{\zeta}_s(t)\Big]=\E\Big[\tr\Big(\tilde{\zeta}_s(t)\tilde{\zeta}_s(t)^{\top}\Big)\Big]=\tr\Big(\E\Big[\tilde{\zeta}_s(t)\tilde{\zeta}_s(t)^{\top}\Big]\Big)\\\nonumber
&\le n\sigma_1\Big(\E\Big[\tilde{\zeta}_s(t)\tilde{\zeta}_s(t)^{\top}\Big]\Big).
\end{align}
Following similar arguments to those in the proof of Lemma~\ref{lemma:upper bound on zeta_tilde} (particularly Eq.~\eqref{eqn:cov of zeta_s tilde}), one can show that 
\begin{align*}
\E\Big[\tilde{\zeta}_s(t)\tilde{\zeta}_s(t)^{\top}\Big]\preceq \tilde{L}_{ss}^{t}\E\Big[\tilde{\zeta}_s(0)\tilde{\zeta}_s(0)^{\top}\Big](\tilde{L}_{ss}^{\top})^{t}+\sigma_w^2p\tilde{\Gamma}^{4D_{\max}}\sum_{k=0}^{t-1}L_{ss}^k(\tilde{L}_{ss}^{\top})^k,
\end{align*}
where $\tilde{L}_{ss}=A_{ss}+B_{ss}\hat{K}_s$, and $\tilde{\zeta}_s(0)=\sum_{w_i\rightarrow s}I_{s,\{i\}}x_i(0)=0$. Since $\varepsilon$ satisfies \eqref{eqn:upper bound on epsilon 2} and thus \eqref{eqn:upper bound on epsilon 1}, we see from Lemma~\ref{lemma:upper bounds on P_r hat and K_r hat self loop} that $\norm{\tilde{L}_{ss}^k}\le\kappa(\frac{\gamma+1}{2})^k$ for all $k\in\Z_{\ge0}$. It now follows that
\begin{align*}
\E\Big[\tilde{\zeta}_s(t)\tilde{\zeta}_s(t)^{\top}\Big]&\preceq\sigma_w^2p\kappa^2\tilde{\Gamma}^{4D_{\max}}\sum_{k=0}^t\Big(\frac{\gamma+1}{2}\Big)^{2k}I\\
&\preceq\frac{4\sigma_w^2p\kappa^2\tilde{\Gamma}^{4D_{\max}}}{1-\gamma^2}I.
\end{align*}
Combining the above arguments together, we obtain \eqref{eqn:upper bound on norm of zeta_tilde}.

Next, recalling from Lemma~\ref{lemma:zeta_s1 and zeta_s2 indept} that $\tilde{x}(t)=\sum_{s\in\U}I_{\V,s}\tilde{\zeta}_s(t)$ for all $t\in\Z_{\ge0}$, we then have 
\begin{align}\nonumber
\E\Big[\norm{\tilde{x}(t)}^2\Big]&=\E\Big[\big(\sum_{s\in\U}I_{\V,s}\tilde{\zeta}_s(t)\big)^{\top}\big(\sum_{s\in\U}I_{\V,s}\tilde{\zeta}_s(t)\big)\Big]\\\nonumber
&\le\Big(\sum_{s\in\U}\sqrt{\E\Big[\tilde{\zeta}_s(t)^{\top}I_{s,\V}I_{\V,s}\tilde{\zeta}_s(t)\Big]}\Big)^2\\\nonumber
&\le\Big(\sum_{s\in\U}\sqrt{\E\Big[\norm{\tilde{\zeta}_s(t)}^2\Big]}\Big)^2\\\nonumber
&\le\frac{4npq^2\sigma_w^2\tilde{\Gamma}^{4D_{\max}}\kappa^2}{1-\gamma^2},
\end{align}
where the first inequality follows from Lemma~\ref{lemma:CS inequallity}, and the last inequality follows from the fact that $|\U|=q$. This completes the proof of \eqref{eqn:upper bound on norm of x_tilde}.

Finally, we note from Eq.~\eqref{eqn:u tilde} that $\tilde{u}(t)=\sum_{s\in\U}I_{\V,s}\hat{K}_s(t)\tilde{\zeta}_s(t)$. Moreover, since $\varepsilon$ satisfies \eqref{eqn:upper bound on epsilon 2} (and thus \eqref{eqn:upper bound on epsilon 1}), we know from Lemmas~\ref{lemma:upper bounds on P_r hat and K_r hat self loop}-\ref{lemma:upper bounds on P_r and K_r general} that $\norm{\hat{K}_s-K_s}\le1$ for all $s\in\U$, which implies via the definition of $\Gamma$ given in~\eqref{eqn:Gamma} that $\norm{\hat{K}_s}\le\tilde{\Gamma}$ for all $s\in\U$. Now, using similar arguments to those above, we can show that \eqref{eqn:upper bound on norm of u_tilde} holds.\hfill\qed

\subsection{Proof of Lemma~\ref{lemma:u_hat minus u_tilde and x_hat minus x_tilde}}
For notational simplicity in this proof, we denote 
\begin{equation*}
\label{eqn:delta_h and beta}
\delta_h=p(\tilde{\Gamma}+1)^{2D_{\max}-1},\ \beta=\tilde{\Gamma}^{2D_{\max}},
\end{equation*}
\begin{equation*}
\label{eqn:Lambda_1}
\Lambda_1=q\tilde{\Gamma}\bigg(\frac{2\kappa p(\beta+1)}{1-\gamma}+\frac{32\kappa^2p(\tilde{\Gamma}+1)}{(1-\gamma)^2}\bigg)\bigg(1+\frac{\kappa\Gamma}{1-\gamma}\bigg),
\end{equation*}
and
\begin{equation*}
\label{eqn:Lambda_2}
\Lambda_2=\frac{2pq\tilde{\Gamma}\kappa}{1-\gamma}\big((\beta+1)q(\tilde{\Gamma}+1)+\delta_h\big)\zeta_b+\frac{16\kappa^2pq\tilde{\Gamma}(\tilde{\Gamma}+1)}{(1-\gamma)^2}\big(2q(\tilde{\Gamma}+1)+2\big)\zeta_b.
\end{equation*}
We first prove \eqref{eqn:u_hat minue u_tilde}. Based on the above notations, we can show that
\begin{align*}
\Lambda_2&=\Big(\frac{2\kappa pq^2}{1-\gamma}\tilde{\Gamma}(\tilde{\Gamma}+1)(\beta+1)+\frac{32\kappa^2 pq^2}{(1-\gamma)^2}\tilde{\Gamma}(\tilde{\Gamma}+1)^2\Big)\zeta_b+\Big(\frac{2\kappa pq}{1-\gamma}\tilde{\Gamma}\delta_h+\frac{32\kappa^2 pq}{(1-\gamma)^2}\tilde{\Gamma}(\tilde{\Gamma}+1)\Big)\zeta_b\\
&\le\frac{34\kappa^2 pq^2}{(1-\gamma)^2}(\tilde{\Gamma}+1)^2\tilde{\Gamma}^{2D_{\max}+1}\zeta_b+\frac{18\kappa^2 p^2q}{(1-\gamma)^2}(\tilde{\Gamma}+1)^{2D_{\max}+3}\zeta_b\\
&\le\frac{52\kappa^2 p^2q^2}{(1-\gamma)^2}(\tilde{\Gamma}+1)^{2D_{\max}+3}\zeta_b,
\end{align*}
where the first inequality follows from the facts that $\frac{\kappa}{1-\gamma}>1$, and that $\beta+1\le\tilde{\Gamma}^{2D_{\max}+1}$ since $\tilde{\Gamma}=\Gamma+1\ge2$ (see our arguments in the proof of Lemma~\ref{lemma:upper bounds on P_r and K_r general}). We then have
\begin{equation}
\label{eqn:1.1Lambda_2 lower bound}
1.1\Lambda_2\le \frac{58\kappa^2(\tilde{\Gamma}+1)^{2D_{\max}+3}p^2q^2}{(1-\gamma)^2}\zeta_b.\end{equation}
Thus, in order to show that \eqref{eqn:u_hat minue u_tilde} holds for all $t\in\Z_{\ge0}$, it suffices to show that $\E\big[\norm{u(t)-\tilde{u}(t)}^2\big]\le (1.1\Lambda_2\bar{\varepsilon})^2$ holds for all $t\in\Z_{\ge0}$. To this end, we prove via an induction on $t=0,1,\dots$. For any $t\in\Z_{\ge0}$, we recall from Eqs.~\eqref{eqn:control policy} and \eqref{eqn:u tilde} that $\hat{u}_i(t)=\sum_{r\ni i}I_{\{i\},r}\hat{K}_r\hat{\zeta}_r(t)$ and $\tilde{u}_i(t)=\sum_{r\ni i}I_{\{i\},r}\hat{K}_r\tilde{\zeta}_r(t)$, respectively, for all $i\in\V$, where $\hat{\zeta}_r(t)$ and $\tilde{\zeta}_r(t)$ are given by Eqs.~\eqref{eqn:dynamics of zeta hat} and \eqref{eqn:dynamics of zeta_tilde}, respectively, and $\hat{K}_r$ is given by Eq.~\eqref{eqn:set of DARES K hat}, for all $r\in\U$. As we argued before, in Eqs.~\eqref{eqn:dynamics of zeta hat} and \eqref{eqn:dynamics of zeta_tilde} we have $\hat{\zeta}_r(0)=\tilde{\zeta}_r(0)=\sum_{w_i\rightarrow r}I_{r,\{i\}}x_i(0)$ for all $r\in\U$. Hence, we have $\hat{u}(0)=\tilde{u}(0)$, which implies that \eqref{eqn:u_hat minue u_tilde} holds for $t=0$, completing the proof of the base step of the induction. 

For the induction step, suppose $\E\big[\norm{\hat{u}(k)-\tilde{u}(k)}^2\big]\le(1.1\Lambda_2\bar{\varepsilon})^2$ holds for all $k\in\{0,\dots,t\}$. 
Now, considering any $k\in\{0,\dots,t\}$, we can unroll the expressions of $\hat{x}(k)$ and $\tilde{x}(k)$ given by Eqs.~\eqref{eqn:state x hat} and \eqref{eqn:state x tilde}, respectively, and obtain
\begin{equation*}
\label{eqn:exp for x_hat(k)}
\hat{x}(k)=A^{k}\hat{x}(0)+\sum_{k^{\prime}=0}^{k-1}A^{k-k^{\prime}-1}(B\hat{u}(k^{\prime})+w(k^{\prime})),
\end{equation*}
and 
\begin{equation*}
\label{eqn:exp for x_tilde(k)}
\tilde{x}(k)=A^{k}\tilde{x}(0)+\sum_{k^{\prime}=0}^{k-1}A^{k-k^{\prime}-1}(B\tilde{u}(k^{\prime})+w(k^{\prime})),
\end{equation*}
where we note that $\hat{x}(0)=\tilde{x}(0)=x(0)$. It then follows  that 
\begin{align}\nonumber
\sqrt{\E\Big[\norm{\hat{x}(k)-\tilde{x}(k)}^2\Big]}&\le\sum_{k^{\prime}=0}^{k-1}\sqrt{\E\Big[\norm{A^{k-k^{\prime}-1}B(\hat{u}(k^{\prime})-\tilde{u}(k^{\prime}))}^2\Big]}\\\nonumber
&\le\sum_{k^{\prime}=0}^{k-1}\sqrt{\E\Big[\norm{A^{k-k^{\prime}-1}}^2\norm{B}^2\norm{\hat{u}(k^{\prime})-\tilde{u}(k^{\prime})}^2\Big]}\\\nonumber
&\le\norm{B}\sum_{k^{\prime}=0}^{k-1}\norm{A^{k-k^{\prime}-1}}\sqrt{\E\Big[\norm{\hat{u}(k^{\prime})-\tilde{u}(k^{\prime})}^2\Big]}\\
&\le\Gamma1.1\Lambda_2\bar{\varepsilon}\sum_{k^{\prime}=0}^{k-1}\norm{A^{k-k^{\prime}-1}}\le1.1\Gamma\Lambda_2\bar{\varepsilon}\frac{\kappa}{1-\gamma},\label{eqn:x_hat(k) minus x_tilde(k) 1}
\end{align}
where the first inequality follows from Lemma~\ref{lemma:CS inequallity}. To obtain the first inequality in \eqref{eqn:x_hat(k) minus x_tilde(k) 1}, we use the induction hypothesis. To obtain the second inequality in \eqref{eqn:x_hat(k) minus x_tilde(k) 1}, we use the fact that $\norm{A^{k^{\prime}}}\le\kappa\gamma^{k^{\prime}}$ (with $0<\gamma<1$), for all $k^{\prime}\in\Z_{\ge0}$, from Assumption~\ref{ass:stable A}. Recalling from our arguments in Section~\ref{sec:control design} (particularly, Eq.~\eqref{eqn:est w_i(t)}), one can show that 
\begin{equation*}
\label{eqn:exp for w_hat(k)}
\hat{w}(k) = \hat{x}(k+1)-\hat{A}\hat{x}(k)-\hat{B}\hat{u}(k),
\end{equation*}
where $\hat{w}(k)=\begin{bmatrix}\hat{w}_1(k)^{\top} & \cdots &\hat{w}_p(k)^{\top}\end{bmatrix}^{\top}$ is an estimate of $w(k)$ in Eq.~\eqref{eqn:overall system}. From Eq.~\eqref{eqn:state x hat}, we see that 
\begin{equation*}
\label{eqn:exp for w(k)}
w(k)=\hat{x}(k+1)-A\hat{x}(k)-B\hat{u}(k).  
\end{equation*}
It follows that
\begin{align}\nonumber
\norm{\hat{w}(k)-w(k)}&\le\norm{\hat{A}-A}\norm{\hat{x}(k)}+\norm{\hat{B}-B}\norm{\hat{u}(k)}\\\nonumber
&\le(\norm{\hat{x}(k)}+\norm{\hat{u}(k)})\bar{\varepsilon}.
\end{align}
Recall from Lemma~\ref{lemma:upper bound on state norms} that $\E\big[\norm{\tilde{x}(k)}^2\big]\le q^2\zeta_b^2$ and $\E\big[\norm{\tilde{u}(k)}^2\big]\le q^2\tilde{\Gamma}^2\zeta_b^2$, for all $k\in\Z_{\ge0}$. We then obtain
\begin{align}\nonumber
\E\Big[\norm{\hat{x}(k)}^2\Big]&=\E\Big[\norm{\hat{x}(k)-\tilde{x}[k]+\tilde{x}[k]}^2\Big]\\\nonumber
&\le\Big(\sqrt{\E\Big[\norm{\hat{x}(k)-\tilde{x}(k)}^2\Big]}+\sqrt{\E\Big[\norm{\tilde{x}[k]}^2\Big]}\Big)^2\\\nonumber
&\le\Big(\frac{1.1\Gamma\Lambda_2\kappa}{1-\gamma}\bar{\varepsilon}+q\zeta_b\Big)^2,
\end{align}
where the first inequality follows again from Lemma~\ref{lemma:CS inequallity}, and the second inequality uses \eqref{eqn:x_hat(k) minus x_tilde(k) 1}. Similarly, we have
\begin{align}\nonumber
\E\Big[\norm{\hat{u}(k)}^2\Big]&=\E\Big[\norm{\hat{u}(k)-\tilde{u}[k]+\tilde{u}[k]}^2\Big]\\\nonumber
&\le\Big(\sqrt{\E\Big[\norm{\hat{u}(k)-\tilde{u}(k)}^2\Big]}+\sqrt{\E\Big[\norm{\tilde{u}[k]}^2\Big]}\Big)^2\\\nonumber
&\le(1.1\Lambda_2\bar{\varepsilon}+q\tilde{\Gamma}\zeta_b)^2.
\end{align}
It then follows that 
\begin{align}\nonumber
\E\Big[\norm{\hat{w}(k)-w(k)}^2\Big]&=\E\Big[\norm{(\hat{A}-A)\hat{x}(k)+(\hat{B}-B)\hat{u}(k)}^2\Big]\\\nonumber
&\le\Big(\sqrt{\E\Big[\norm{(\hat{A}-A)\hat{x}(k)}^2\Big]}+\sqrt{\E\Big[\norm{(\hat{B}-B)\hat{u}(k)}^2\Big]}\Big)^2\\\nonumber
&\le\Big(\sqrt{\E\Big[\norm{\hat{x}(k)}^2\Big]}+\sqrt{\E\Big[\norm{\hat{u}(k)}^2\Big]}\Big)^2\bar{\varepsilon}^2\\\nonumber
&\le\big(q(\tilde{\Gamma}+1)\zeta_b+\frac{1.1\Gamma\Lambda_2\kappa}{1-\gamma}\bar{\varepsilon}+1.1\Lambda_2\big)^2\bar{\varepsilon}^2,
\end{align}
where the first inequality again follows from Lemma~\ref{lemma:CS inequallity}, and the second inequality follows from $\norm{\hat{A}-A}\le\bar{\varepsilon}$ and $\norm{\hat{B}-B}\le\bar{\varepsilon}$. Denoting
\begin{equation}
\label{eqn:delta_w}
\delta_w=q(\tilde{\Gamma}+1)\zeta_b+\frac{1.1\Gamma\Lambda_2\kappa}{1-\gamma}\bar{\varepsilon}+1.1\Lambda_2\bar{\varepsilon},
\end{equation}
we have
\begin{equation}
\label{eqn:w_hat(k) minus w(k) 1}
\E\Big[\norm{\hat{w}(k)-w(k)}^2\Big]\le\delta_w^2\bar{\varepsilon}^2\quad \forall k\in\{0,\dots,t\}.
\end{equation}
Moreover, note that
\begin{align}\nonumber
\E\Big[\norm{w(k)}^2\Big]&=\E\Big[\tr(w(k)w(k)^{\top})\Big]=\tr\Big(\E\Big[w(k)w(k)^{\top}\Big]\Big)\\
&=n\sigma^2_w\le\zeta_b^2\quad\forall k\in\Z_{\ge0}.\label{eqn:norm of w(k)}
\end{align}

To proceed, let us consider any $s\in\U$ that has a self loop. Recalling the arguments in the proof of Lemmas~\ref{lemma:upper bound on zeta_tilde}, we can rewrite Eq.~\eqref{eqn:dynamics of zeta_tilde} as 
\begin{equation}
\label{eqn:zeta_tilde s}
\tilde{\zeta}_s(t+1)=(A_{ss}+B_{ss}\hat{K}_s)\tilde{\zeta}_s(t)+\eta_s(t),
\end{equation}
with
\begin{equation}
\label{eqn:eta_s(t)}
\eta_s(t)=\sum_{v\in\CL_s}H(v,s)\sum_{w_j\rightarrow v}I_{v,\{j\}}w_j(t-l_{vs}),
\end{equation}
where $\CL_s=\{v\in\CL:v\rightsquigarrow s\}$ is the set of leaf nodes in $\CP(\U,\CH)$ that can reach $s$, $l_{vs}$ is the length of the (unique) directed path from node $v$ to node $s$ in $\CP(\U,\CH)$ with $l_{vs}=0$ if $v=s$, and
\begin{equation*}
\label{eqn:H(v,s) 1}
H(v,s) = (A_{sr_1}+B_{sr_1}\hat{K}_{r_1})\cdots(A_{r_{l_{vs}-1}v}+B_{r_{l_{vs}-1}v}\hat{K}_v),
\end{equation*}
with $H(v,s)=I$ if $v=s$, where $v,r_{l_{vs}-1},\dots,r_1,s$ are the nodes along the directed path from $v$ to $s$ in $\CP(\U,\CH)$. We also recall from the arguments in the proof of Lemma~\ref{lemma:upper bound on zeta_tilde} that $\norm{H(v,s)}\le\beta$ for all $v\in\CL_s$. We then see from \eqref{eqn:cov of eta_s} in the proof of Lemma~\ref{lemma:upper bound on zeta_tilde} and the definition of $\zeta_b$ in \eqref{eqn:aux parameters} that 
\begin{align}\nonumber
\E\Big[\norm{\eta_s(k)}^2\Big]&=\E\Big[\tr(\eta_s(k)\eta_s(k)^{\top})\Big]=\tr\Big(\E\Big[\eta_s(k)\eta_s(k)^{\top}\Big]\Big)\\
&\le\sigma_w^2np\beta^2\le\zeta_b^2 \quad\forall k\in\Z_{\ge0}.\label{eqn:upper bound on eta_s 1}
\end{align}
Similarly, one can rewrite Eq.~\eqref{eqn:dynamics of zeta hat} as 
\begin{equation}
\label{eqn:zeta_hat s}
\hat{\zeta}_s(t+1)=(\hat{A}_{ss}+\hat{B}_{ss}\hat{K}_s)\hat{\zeta}_s(t)+\hat{\eta}_s(t),
\end{equation}
where 
\begin{equation}
\label{eqn:eta_s(t) hat}
\hat{\eta}_s(t)=\sum_{v\in\CL_s}\hat{H}(v,s)\sum_{w_j\rightarrow v}I_{v,\{j\}}\hat{w}_j(t-l_{vs}),
\end{equation}
where
\begin{equation*}
\label{eqn:H_hat(v,s) 1}
\hat{H}(v,s) = (\hat{A}_{sr_1}+\hat{B}_{sr_1}\hat{K}_{r_1})\cdots(\hat{A}_{r_{l_{vs}-1}v}+\hat{B}_{r_{l_{vs}-1}v}\hat{K}_v),
\end{equation*}
with $\hat{H}(v,s)=I$ if $v=s$. Note that for any $v\in\CL_s$ and for any $w_j\rightarrow v$ in Eqs.~\eqref{eqn:eta_s(t)} and \eqref{eqn:eta_s(t) hat}, we set $\hat{w}_j(k-l_{vs})=w_j(k-l_{vs})=0$ if $k<l_{vs}$. One can check that $\bar{\varepsilon}$ satisfies \eqref{eqn:upper bound on epsilon 1} and \eqref{eqn:upper bound on epsilon 2}. We then have from Lemmas~\ref{lemma:upper bounds on P_r hat and K_r hat self loop}-\ref{lemma:upper bounds on P_r and K_r general} that $\norm{\hat{K}_r}\le\Gamma+1=\tilde{\Gamma}$ for all $r\in\U$. It follows that $\norm{\hat{A}_{sr}+\hat{B}_{sr}\hat{K}_r}\le(\tilde{\Gamma}+1)^2$ for all $r\in\U$ with $r\neq s$. Noting from the construction of $\CP(\U,\CH)$ in \eqref{eqn:def of info graph} that $l_{vs}\le D_{\max}\le p$ for all $v\in\CL_s$, one can now show that
\begin{equation}
\label{eqn:H_hat minus H}
\norm{H(v,s)-\hat{H}(v,s)}\le \delta_h\bar{\varepsilon}.
\end{equation}
which also implies that 
\begin{equation}
\label{eqn:upper bound on norm of H_hat}
\norm{\hat{H}(v,s)}\le\delta_h\bar{\varepsilon}+\beta,
\end{equation}
for all $v\in\CL_s$. For any $k\in\{0,\dots,t\}$, we then have from the above arguments that 
\begin{align}\nonumber
&\sqrt{\E\Big[\norm{\eta_s(k)-\hat{\eta}_s(k)}^2\Big]}\\\nonumber
=&\bigg(\E\bigg[\Big\|\sum_{v\in\CL_s}\sum_{w_j\rightarrow v}\big(H(v,s)I_{v,\{j\}}w_j(k-l_{vs})-\hat{H}(v,s)I_{v,\{j\}}\hat{w}_j(k-l_{vs})\big)\Big\|^2\bigg]\bigg)^{\frac{1}{2}}\\\nonumber
\le&\sum_{v\in\CL_s}\sum_{w_j\rightarrow v}\Big(\sqrt{\E\Big[\big\|\big(H(v,s)-\hat{H}(v,s)\big)w_j(k-l_{vs})\big\|^2\Big]}+\sqrt{\E\Big[\big\|\hat{H}(v,s)\big(w_j(k-l_{vs})-\hat{w}_j(k-l_{vs})\big)\big\|^2\Big]}\Big)\\
\le& p\big(\delta_h\zeta_b\bar{\varepsilon}+(\delta_h\bar{\varepsilon}+\beta)\delta_w\bar{\varepsilon}\big),\label{eqn:eta_s minus eta_s hat}
\end{align}
where the first inequality follows from Lemma~\ref{lemma:CS inequallity}. To obtain \eqref{eqn:eta_s minus eta_s hat}, we first note \eqref{eqn:w_hat(k) minus w(k) 1}-\eqref{eqn:norm of w(k)} and \eqref{eqn:H_hat minus H}-\eqref{eqn:upper bound on norm of H_hat}. We then use the fact that $|\CL_s|\le p$ from the definition of the information graph $\CP(\U,\CH)$ given by \eqref{eqn:def of info graph}, and the fact that for any $v\in\U$ with $s_j(0)=v$, $w_j$ is the only noise term such that $w_j\rightarrow v$ (see Footnote~2). From \eqref{eqn:upper bound on eta_s 1} and \eqref{eqn:eta_s minus eta_s hat}, we also obtain
\begin{align}\nonumber
\sqrt{\E\Big[\norm{\hat{\eta}_s(k)}^2\Big]}&=\sqrt{\E\Big[\norm{\hat{\eta}_s(k)-\eta_s(k)+\eta_s(k)}^2\Big]}\\\nonumber
&\le\sqrt{\E\Big[\norm{\hat{\eta}_s(k)-\eta_s(k)}^2\Big]}+\sqrt{\E\Big[\norm{\eta_s(k)}^2\Big]}\\
&\le p\big(\delta_h\zeta_b\bar{\varepsilon}+(\delta_h\bar{\varepsilon}+\beta)\delta_w\bar{\varepsilon}\big)+\zeta_b,\label{eqn:upper bound on eta_s hat}
\end{align}
where the first inequality follows from Lemma~\ref{lemma:CS inequallity}.

Now, let us denote $\tilde{L}_{ss}=A_{ss}+B_{ss}\hat{K}_s$ and $\hat{L}_{ss}=\hat{A}_{ss}+\hat{B}_{ss}\hat{K}_s$. Recalling that $\hat{\zeta}_s(0)=\tilde{\zeta}_s(0)=\sum_{w_i\rightarrow s}I_{s,\{i\}}x_i(0)$, where $x(0)=0$ as we assumed before, one can unroll Eqs.~\eqref{eqn:zeta_tilde s} and \eqref{eqn:zeta_hat s}, and show that 
\begin{equation}
\label{eqn:zeta_hat s minus zeta_tilde s}
\hat{\zeta}_s(t+1)-\tilde{\zeta}_s(t+1)=\sum_{k=0}^t\big(\hat{L}_{ss}^{t-k}\hat{\eta}_s(k)-\tilde{L}_{ss}^{t-k}\tilde{\eta}_s(k)\big).
\end{equation}
Since $\norm{\hat{A}-A}\le\bar{\varepsilon}$ and $\norm{\hat{B}-B}\le\bar{\varepsilon}$, where $\bar{\varepsilon}$ satisfies \eqref{eqn:upper bound on epsilon 1}, as we argued above, we have from Lemma~\ref{lemma:upper bounds on P_r hat and K_r hat self loop} that 
\begin{equation}
\label{eqn:upper bound on L_ss^k}
\norm{\tilde{L}_{ss}^k}\le\kappa(\frac{\gamma+1}{2})^k\quad\forall k\in\Z_{\ge0},
\end{equation}
where $\kappa\in\R_{\ge1}$ and $\gamma\in\R$, with $0<\gamma<1$. Moreover, since $\norm{\hat{L}_{ss}-\tilde{L}_{ss}}=\norm{\hat{A}_{ss}-A_{ss}+\hat{K}_s(\hat{B}_{ss}-B_{ss})}\le(\tilde{\Gamma}+1)\bar{\varepsilon}$, we have from Lemma~\ref{lemma:matrix power perturbation bound} that 
\begin{align}\nonumber
\norm{\hat{L}_{ss}^k-\tilde{L}_{ss}^k}&\le k\kappa^2\big(\kappa(\tilde{\Gamma}+1)\bar{\varepsilon}+\frac{\gamma+1}{2}\big)^{k-1}(\tilde{\Gamma}+1)\bar{\varepsilon}\\
&\le k\kappa^2(\frac{\gamma+3}{4})^{k-1}(\tilde{\Gamma}+1)\bar{\varepsilon}\quad\forall k\in\Z_{\ge0},\label{eqn:L_hat^k minus L_tilde^k}
\end{align}
where \eqref{eqn:L_hat^k minus L_tilde^k} follows from the choice of $\bar{\varepsilon}$ in \eqref{eqn:aux parameters}. Now, considering any term in the summation on the right-hand side of \eqref{eqn:zeta_hat s minus zeta_tilde s}, we have 
\begin{align}\nonumber
&\sqrt{\E\Big[\norm{\hat{L}_{ss}^{t-k}\hat{\eta}_s(k)-\tilde{L}_{ss}^{t-k}\tilde{\eta}_s(k)}^2\Big]}\\\nonumber
\le&\sqrt{\E\Big[\norm{(\hat{L}_{ss}^{t-k}-\tilde{L}_{ss}^{t-k})\hat{\eta}_s(k)}^2\Big]}+\sqrt{\E\Big[\norm{\tilde{L}_{ss}^{t-k}(\hat{\eta}_s(k)-\eta_s(k))}^2\Big]}\\\nonumber
\le&(t-k)\kappa^2(\frac{\gamma+3}{4})^{t-k-1}(\tilde{\Gamma}+1)\bar{\varepsilon}\big(p(\delta_w\delta_h\bar{\varepsilon}+\delta_h\zeta_b+\delta_w\beta)\bar{\varepsilon}+\zeta_b\big)+\kappa(\frac{\gamma+1}{2})^{t-k}p(\delta_w\delta_h\bar{\varepsilon}+\delta_h\zeta_b+\delta_w\beta)\bar{\varepsilon}\\
\le&(t-k)\kappa^2(\frac{\gamma+3}{4})^{t-k-1}(\tilde{\Gamma}+1)p\big(2\delta_w+2\zeta_b\big)\bar{\varepsilon}+\kappa(\frac{\gamma+1}{2})^{t-k}p\big((\beta+1)\delta_w+\delta_h\zeta_b\big)\bar{\varepsilon},\label{eqn:single term in summation 2}
\end{align}
where the first inequality follows from Lemma~\ref{lemma:CS inequallity}, and the second inequality uses the upper bounds provided in~\eqref{eqn:eta_s minus eta_s hat}-\eqref{eqn:upper bound on eta_s hat} and \eqref{eqn:upper bound on L_ss^k}-\eqref{eqn:L_hat^k minus L_tilde^k}. Moreover, one can show that $\bar{\varepsilon}$ defined in \eqref{eqn:aux parameters} satisfies that $\delta_h\bar{\varepsilon}\le1$ and $\beta\bar{\varepsilon}\le1$, which, via algebraic manipulations, yield \eqref{eqn:single term in summation 2}. We then see from \eqref{eqn:zeta_hat s minus zeta_tilde s} that
\begin{align}\nonumber
&\sqrt{\E\Big[\norm{\hat{\zeta}_s(t+1)-\tilde{\zeta}_s(t+1)}^2\Big]}\\\nonumber
\le&\sum_{k=0}^t\sqrt{\E\Big[\norm{\hat{L}_{ss}^{t-k}\hat{\eta}_s(k)-\tilde{L}_{ss}^{t-k}\tilde{\eta}_s(k)}^2\Big]}\\\nonumber
\le& \bigg(\kappa p\big((\beta+1)\delta_w+\delta_h\zeta_b\big)\bar{\varepsilon}\sum_{k=0}^t(\frac{\gamma+1}{2})^{t-k}\bigg)+\kappa^2(\tilde{\Gamma}+1)p\big(2\delta_w+2\zeta_b\big)\bar{\varepsilon}\sum_{k=0}^t(t-k)(\frac{\gamma+3}{4})^{t-k-1}\\
\le&\frac{2\kappa p}{1-\gamma}\big((\beta+1)\delta_w+\delta_h\zeta_b\big)\bar{\varepsilon}+\frac{16\kappa^2(\tilde{\Gamma}+1)p}{(1-\gamma)^2}\big(2\delta_w+2\zeta_b\big)\bar{\varepsilon},\label{eqn:zeta_hat s minus zeta_tilde s 1}
\end{align}
where the first inequality follows from Lemma~\ref{lemma:CS inequallity}, and \eqref{eqn:zeta_hat s minus zeta_tilde s 1} follows from standard formulas for series. Now, substituting Eq.~\eqref{eqn:delta_w} into the right-hand side of \eqref{eqn:zeta_hat s minus zeta_tilde s 1}, one can show that
\begin{align}
\sqrt{\E\Big[\norm{\hat{\zeta}_s(t+1)-\tilde{\zeta}_s(t+1)}^2\Big]}\le\frac{1}{q\tilde{\Gamma}}\big(\Lambda_1(1.1\Lambda_2\bar{\varepsilon})+\Lambda_2\big)\bar{\varepsilon}, \label{eqn:zeta_hat s minus zeta_tilde s 2}
\end{align}
where we note that $\Lambda_1>0$ and $\Lambda_2>0$ by their definitions.

Next, considering any $s\in\U$ that does not have a self loop, we have from the arguments in the proof of Lemma~\ref{lemma:upper bound on zeta_tilde} that Eq.~\eqref{eqn:dynamics of zeta_tilde} can be rewritten as $\tilde{\zeta}_s(t+1)=\eta_s(t)$, where $\eta_s(t)$ is defined in Eq.~\eqref{eqn:eta_s(t)}. Similarly, Eq.~\eqref{eqn:dynamics of zeta hat} can be rewritten as $\hat{\zeta}_s(t+1)=\hat{\eta}_s(t)$, where $\hat{\eta}_s(t)$ is defined in Eq.~\eqref{eqn:eta_s(t) hat}. Using similar arguments to those above, one can then show that \eqref{eqn:zeta_hat s minus zeta_tilde s 2} also holds.

Further recalling Eqs.~\eqref{eqn:control policy} and \eqref{eqn:u tilde}, we know that $\hat{u}(t+1)=\sum_{s\in\U}I_{\V,s}\hat{K}_s\hat{\zeta}(t+1)$ and $\tilde{u}(t+1)=\sum_{s\in\U}I_{\V,s}\hat{K}_s\tilde{\zeta}(t+1)$, which imply that 
\begin{align}\nonumber
\sqrt{\E\Big[\norm{\hat{u}(t+1)-\tilde{u}(t+1)}^2\Big]}&=\sqrt{\E\Big[\big\|\sum_{s\in\U}I_{\V,s}\hat{K}_s(\hat{\zeta}_s(t+1)-\tilde{\zeta}_s(t+1))\big\|^2\Big]}\\\nonumber
&\le\sum_{s\in\U}\sqrt{\E\Big[\norm{I_{\V,s}\hat{K}_s(\hat{\zeta}_s(t+1)-\tilde{\zeta}_s(t+1))}^2\Big]}\\\nonumber
&\le\tilde{\Gamma}\sum_{s\in\U}\sqrt{\E\Big[\norm{\hat{\zeta}_s(t+1)-\tilde{\zeta}_s(t+1)}^2\Big]}\\\nonumber
&\le\big(\Lambda_1(1.1\Lambda_2\bar{\varepsilon})+\Lambda_2\big)\bar{\varepsilon},
\end{align}
where the first inequality follows from Lemma~\ref{lemma:CS inequallity}, the second inequality follows from $\norm{\hat{K}_s}\le\tilde{\Gamma}$ for all $s\in\U$, as we argued above, and the last inequality follows from \eqref{eqn:zeta_hat s minus zeta_tilde s 2} and the fact that $|\U|=q$. Moreover, we can show that
\begin{align*}
\Lambda_1&\le q\tilde{\Gamma}\frac{34\kappa^2 p}{(1-\gamma)^2}(\tilde{\Gamma}+1)(\beta+1)\frac{\tilde{\Gamma}\kappa}{1-\gamma}\\
&\le\frac{34\kappa^2pq}{(1-\gamma)^3}(\tilde{\Gamma}+1)\tilde{\Gamma}^{2D_{\max}+3},
\end{align*}
where the first inequality follows from the fact that $\frac{\kappa}{1-\gamma}>1$, and the second inequality follows from the fact that $\beta+1\le\tilde{\Gamma}^{2D_{\max}+1}$ as we argued above. One can then show that $\bar{\varepsilon}$ given in \eqref{eqn:aux parameters} satisfies that $0<\bar{\varepsilon}\le\frac{1}{11\Lambda_1}$. Thus, we obtain the following:
\begin{align}\nonumber
&1.1\Lambda_1\bar{\varepsilon}+1\le 1.1\\\nonumber
\Leftrightarrow\ &1.1\Lambda_1\Lambda_2\bar{\varepsilon}+\Lambda_2\le1.1\Lambda_2\\\nonumber
\Leftrightarrow\ &\Lambda_1(1.1\Lambda_2\bar{\varepsilon})\bar{\varepsilon}+\Lambda_2\bar{\varepsilon}\le1.1\Lambda_2\bar{\varepsilon}.
\end{align}
It follows that
\begin{equation*}
\sqrt{\E\Big[\norm{\hat{u}(t+1)-\tilde{u}(t+1)}^2\Big]}\le 1.1\Lambda_2\bar{\varepsilon},
\end{equation*}
which completes the induction step, i.e., we have shown that $\sqrt{\E\big[\norm{\hat{u}(k)-\tilde{u}(k)}^2\big]}\le1.1\Lambda_2\bar{\varepsilon}$ holds for all $k\in\{0,\dots,t+1\}$.

Next, using similar arguments to those for \eqref{eqn:x_hat(k) minus x_tilde(k) 1}, we have $\sqrt{\E\big[\norm{\hat{x}(t)-\tilde{x}(t)}^2\big]}\le\frac{1.1\Gamma\Lambda_2\kappa}{1-\gamma}\bar{\varepsilon}$ for all $t\in\Z_{\ge0}$. It then follows from \eqref{eqn:1.1Lambda_2 lower bound} that \eqref{eqn:x_hat minus x_tilde} holds for all $t\in\Z_{\ge0}$.\hfill\qed

\subsection{Proof of Proposition~\ref{prop:upper bound on J_hat minue J_tilde}}
For notational simplicity in this proof, we denote 
\begin{equation}
\label{eqn:Lambda}
\Lambda=\frac{58\kappa^2(\tilde{\Gamma}+1)^{2D_{\max}+3}p^2q^2}{(1-\gamma)^2}.
\end{equation}
For all $t\in\Z_{\ge0}$, we then see from Lemma~\ref{lemma:u_hat minus u_tilde and x_hat minus x_tilde} that 
\begin{equation*}
\label{eqn:u_hat(k) minus u_tilde(k) simple}
\E\Big[\norm{\hat{u}(t)-\tilde{u}(t)}^2\Big]\le(\Lambda\zeta_b\bar{\varepsilon})^2,
\end{equation*}
and
\begin{equation*}
\label{eqn:x_hat(k) minus x_tilde(k) simple}
\E\Big[\norm{\hat{x}(t)-\tilde{x}(t)}^2\Big]\le\Big(\frac{\kappa\Gamma}{1-\gamma}\Lambda\zeta_b\bar{\varepsilon}\Big)^2,
\end{equation*}
where $\hat{u}(k)$ (resp., $\tilde{u}(k)$) is given by Eq.~\eqref{eqn:control policy} (resp., Eq.~\eqref{eqn:u tilde}), $\hat{x}(k)$ (resp., $\tilde{x}(k)$) is given by Eq.~\eqref{eqn:state x hat} (resp., Eq.~\eqref{eqn:state x tilde}), and $\zeta_b$ is defined in Eq.~\eqref{eqn:aux parameters}. Similarly, we see from Corollary~\ref{coro:upper bound on x_hat and u_hat} that
\begin{equation*}
\label{eqn:upper bound on x_hat(k) general}
\E\Big[\norm{\hat{x}(t)}^2\Big]\le\Big(\frac{\kappa\Gamma}{1-\gamma}\Lambda\zeta_b\bar{\varepsilon}+q\zeta_b\Big)^2,
\end{equation*}
and 
\begin{equation*}
\label{eqn:upper bound on u_hat(k) general}
\E\Big[\norm{\hat{u}(t)}^2\Big]\le(\Lambda\zeta_b\bar{\varepsilon}+q\tilde{\Gamma}\zeta_b)^2,
\end{equation*}
for all $t\in\Z_{\ge0}$. 

To proceed, we have the following:
\begin{align}\nonumber
\hat{J}-\tilde{J}&=\limsup_{T\to\infty}\E\Big[\frac{1}{T}\sum_{t=0}^{T-1}\big(\hat{x}(t)^{\top}Q\hat{x}(t)+\hat{u}(t)^{\top}R\hat{u}(t)\big)\Big]-\lim_{T\to\infty}\E\Big[\frac{1}{T}\sum_{t=0}^{T-1}\big(\tilde{x}(t)^{\top}Q\tilde{x}(t)+\tilde{u}(t)^{\top}R\tilde{u}(t)\big)\Big]\\
&=\limsup_{T\to\infty}\E\Big[\frac{1}{T}\sum_{t=0}^{T-1}\big(\hat{x}(t)^{\top}Q\hat{x}(t)-\tilde{x}(t)^{\top}Q\tilde{x}(t)+\hat{u}(t)^{\top}R\hat{u}(t)-\tilde{u}(t)^{\top}R\tilde{u}(t)\big)\Big].\label{eqn:J_hat minus J_tilde}
\end{align}
Now, considering any term in the summation on the right-hand side of Eq.~\eqref{eqn:J_hat minus J_tilde}, and dropping the dependency on $t$ for notational simplicity, we have the following:
\begin{align}\nonumber
&\E\Big[\hat{x}^{\top}Q\hat{x}-\tilde{x}^{\top}Q\tilde{x}\Big]\\\nonumber
=&\E\Big[\hat{x}^{\top}Q(\hat{x}-\tilde{x})+(\hat{x}-\tilde{x})^{\top}Q\tilde{x}\Big]\\\nonumber
\le&\E\Big[\norm{Q\hat{x}}\norm{\hat{x}-\tilde{x}}\Big]+\E\Big[\norm{\hat{x}-\tilde{x}}\norm{Q\tilde{x}}\Big]\\\nonumber
\le&\sqrt{\E\Big[\norm{Q\hat{x}}^2\Big]\E\Big[\norm{\hat{x}-\tilde{x}}^2\Big]}+\sqrt{\E\Big[\norm{\hat{x}-\tilde{x}}^2\Big]\E\Big[\norm{Q\tilde{x}}^2\Big]}\\\nonumber
\le&\sigma_1(Q)\Big(\frac{\kappa\Gamma\Lambda\zeta_b}{1-\gamma}\bar{\varepsilon}+q\zeta_b\Big)\frac{\kappa\Gamma\Lambda\zeta_b}{1-\gamma}\bar{\varepsilon}+\sigma_1(Q)\frac{\kappa\Gamma\Lambda\zeta_b}{1-\gamma}\bar{\varepsilon}q\zeta_b\\
=&\sigma_1(Q)\Big(\frac{\kappa\Gamma\Lambda\zeta_b}{1-\gamma}\bar{\varepsilon}+2q\zeta_b\Big)\frac{\kappa\Gamma\Lambda\zeta_b}{1-\gamma}\bar{\varepsilon},\label{eqn:single term in J_hat minus J_tilde x}
\end{align}
where the first two inequalities follow from the Cauchy-Schwartz inequality, and the third inequality follows from the upper bounds on $\E\big[\norm{\hat{x}}^2\big]$, $\E\big[\norm{\hat{x}-\tilde{x}}^2\big]$, and $\E\big[\norm{\tilde{x}}^2\big]$ given above and in Lemma~\ref{lemma:upper bound on state norms}. Similarly, we have
\begin{align}\nonumber
&\E\Big[\hat{u}^{\top}R\hat{u}-\tilde{u}^{\top}R\tilde{u}\Big]\\\nonumber
\le&\sqrt{\E\Big[\norm{R\hat{u}}^2\Big]\E\Big[\norm{\hat{u}-\tilde{u}}^2\Big]}+\sqrt{\E\Big[\norm{\hat{u}-\tilde{u}}^2\Big]\E\Big[\norm{R\tilde{u}}^2\Big]}\\\nonumber
\le&\sigma_1(R)(\Lambda\zeta_b\bar{\varepsilon}+q\tilde{\Gamma}\zeta_b)\Lambda\zeta_b\bar{\varepsilon}+\sigma_1(R)\Lambda\zeta_b\bar{\varepsilon}q\tilde{\Gamma}\zeta_b\\
=&\sigma_1(R)(\Lambda\zeta_b\bar{\varepsilon}+2q\tilde{\Gamma}\zeta_b)\Lambda\zeta_b\bar{\varepsilon},\label{eqn:single term in J_hat minus J_tilde u}
\end{align}
where the second inequality follows from the upper bounds on $\E\big[\norm{\hat{u}}^2\big]$, $\E\big[\norm{\hat{u}-\tilde{u}}^2\big]$, and $\E\big[\norm{\tilde{u}}^2\big]$ given above and in Lemma~\ref{lemma:upper bound on state norms}. Combining \eqref{eqn:single term in J_hat minus J_tilde x} and \eqref{eqn:single term in J_hat minus J_tilde u} together, we obtain from Eq.~\eqref{eqn:J_hat minus J_tilde} that
\begin{align}\nonumber
\hat{J}-\tilde{J}&\le\sigma_1(Q)\Big(\frac{\kappa\Gamma\Lambda\zeta_b}{1-\gamma}\bar{\varepsilon}+2q\zeta_b\Big)\frac{\kappa\Gamma\Lambda\zeta_b}{1-\gamma}\bar{\varepsilon}+\sigma_1(R)(\Lambda\zeta_b\bar{\varepsilon}+2q\tilde{\Gamma}\zeta_b)\Lambda\zeta_b\bar{\varepsilon}\\\nonumber
&\le\Big(\frac{\kappa\tilde{\Gamma}\zeta_b}{1-\gamma}\Big)^2(\Lambda^2\bar{\varepsilon}+2q\Lambda)(\sigma_1(Q)+\sigma_1(R))\bar{\varepsilon}\\
&\le\Big(\frac{\kappa\tilde{\Gamma}\zeta_b}{1-\gamma}\Big)^23\Lambda pq(\sigma_1(Q)+\sigma_1(R))\bar{\varepsilon},\label{eqn:J_hat minus J_tilde 1}
\end{align}
where the second inequality follows from the fact that $\frac{\kappa\tilde{\Gamma}}{1-\gamma}\ge1$. To obtain \eqref{eqn:J_hat minus J_tilde 1}, one can show that $\Lambda^2\bar{\varepsilon}\le\Lambda pq$. Finally substituting the expressions for $\zeta_b$ and $\Lambda$ given in \eqref{eqn:aux parameters} and \eqref{eqn:Lambda}, respectively, we obtain from \eqref{eqn:J_hat minus J_tilde 1} that \eqref{eqn:upper bound on J_hat minus J_tilde} holds.\hfill\qed

\section{Auxiliary Lemmas}\label{sec:technical lemmas}
\begin{lemma}\label{lemma:upper bound on gaussian w}
\cite[Lemma~34]{cassel2020logarithmic} Let $w(t)\in\R^n$ be a Gaussian random vector with distribution $\CN(0,\sigma_w^2 I)$, for all $t\in\{0,\dots,N-1\}$, where $\sigma_w\in\R_{\ge0}$. Then for any $N\ge2$ and for any $\delta_w>0$, the following holds with probability at least $1-\delta_w$:
\begin{equation*}
\max_{0\le t\le N-1}\norm{w(t)}\le\sigma_w\sqrt{5n\log\frac{N}{\delta_w}}.
\end{equation*}
\end{lemma}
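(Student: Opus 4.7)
The plan is to apply a standard chi-squared tail bound to each $\|w(t)\|^2$ separately and then a union bound over $t \in \{0,\dots,N-1\}$. Since $w(t) \sim \mathcal{N}(0,\sigma_w^2 I)$ with $w(t) \in \R^n$, the random variable $\|w(t)\|^2/\sigma_w^2$ follows a chi-squared distribution with $n$ degrees of freedom, so classical concentration results apply directly to each coordinate of the maximum.

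First, I would invoke the Laurent--Massart inequality, which gives, for any $x > 0$ and any fixed $t$,
\begin{equation*}
\Prob\!\Big[\|w(t)\|^2 \ge \sigma_w^2\bigl(n + 2\sqrt{nx} + 2x\bigr)\Big] \le e^{-x}.
\end{equation*}
Setting $x = \log(N/\delta_w)$ makes the right-hand side equal to $\delta_w/N$. A union bound over the $N$ time indices $t \in \{0,\dots,N-1\}$ then yields, with probability at least $1-\delta_w$,
\begin{equation*}
\max_{0 \le t \le N-1}\|w(t)\|^2 \le \sigma_w^2\Big(n + 2\sqrt{n\log(N/\delta_w)} + 2\log(N/\delta_w)\Big).
\end{equation*}

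The remaining step is a purely algebraic simplification to collapse the three-term bound $n + 2\sqrt{nx} + 2x$ into the clean form $5nx$ with $x = \log(N/\delta_w)$. Using AM--GM on the cross term gives $2\sqrt{nx} \le n + x$, so the bound is at most $2n + 3x$; then, assuming $n \ge 1$ and $x \ge 1$, one has $2n \le 2nx$ and $3x \le 3nx$, yielding $2n+3x \le 5nx$ as required. Taking square roots produces the stated $\sigma_w\sqrt{5n\log(N/\delta_w)}$ bound on $\max_t\|w(t)\|$.

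The main (minor) obstacle is handling the regime where $\log(N/\delta_w) < 1$, since the AM--GM collapse above requires $x \ge 1$. In the application (see Eq.~\eqref{eqn:events}), the lemma is instantiated with $\delta_w = \delta/4$ and $N$ satisfying $N \ge 200(n+m)\log(48/\delta)$, which comfortably forces $\log(4N/\delta) \ge 1$, so the regime is non-binding. If one wishes to cover the corner case cleanly, one can simply replace $\log(N/\delta_w)$ by $\max\{1,\log(N/\delta_w)\}$ throughout, or alternatively apply the Gaussian concentration bound $\Prob[\|w(t)\| \ge \sigma_w(\sqrt{n} + \sqrt{2x})] \le e^{-x}$ directly and absorb the $\sqrt{n}$ term into the same $5n\log(N/\delta_w)$ envelope.
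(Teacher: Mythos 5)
The paper does not actually prove this lemma: it is imported verbatim as Lemma~34 of \cite{cassel2020logarithmic}, so there is no in-paper argument to compare against. Your reconstruction is the standard one and is essentially correct: the Laurent--Massart tail bound $\Prob[\|w(t)\|^2\ge\sigma_w^2(n+2\sqrt{nx}+2x)]\le e^{-x}$ with $x=\log(N/\delta_w)$, a union bound over the $N$ indices, and the AM--GM collapse $n+2\sqrt{nx}+2x\le 2n+3x\le 5nx$ for $n\ge1$, $x\ge1$. You are right to flag the regime $x<1$ as the only weak point, and it is a genuine one for the chain of inequalities as written: the lemma only assumes $N\ge2$ and $\delta_w>0$, so e.g.\ $n=1$, $N=2$, $\delta_w=0.9$ gives $x\approx0.8$ and $2n+3x\approx4.4>5nx\approx4$, so the final simplification fails there even though the stated conclusion still holds by direct computation. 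Your two proposed fixes are both adequate, and the second observation --- that in the paper's only use of the lemma (the events in Eq.~\eqref{eqn:events}, with $\delta_w=\delta/4$ and $N\ge200(n+m)\log\frac{48}{\delta}$) one has $\log(4N/\delta)\ge1$ --- means the corner case never binds where it matters. In short: a correct, self-contained proof of a result the paper leaves as a citation, with the one loose end honestly identified and patchable.
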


\begin{lemma}\label{lemma:lower bound on sum over z_k}
\cite[Lemma~36]{cassel2020logarithmic}
Let $\{z(t)\}_{t\ge0}$ be a sequence of random vectors that is adapted to a filtration $\{\F_t\}_{t\ge0}$, where $z(t)\in\R^d$ for all $t\in\Z_{\ge0}$. Suppose $z(t)$ is conditionally Gaussian on $\F_{t-1}$ with $\E[z(t)z(t)^{\top}|\F_{t-1}]\ge\sigma_z^2I$, for all $t\in\Z_{\ge1}$, where $\sigma_z\in\R_{>0}$. Then, for any $\delta_z\in\R_{>0}$ and for any $t\ge200d\log\frac{12}{\delta_z}$, the following holds with probability at least $1-\delta_z$:
\begin{equation*}
\sum_{k=0}^{t-1}z(k)z(k)^{\top}\succeq\frac{(t-1)\sigma_z^2}{40}I.
\end{equation*}
\end{lemma}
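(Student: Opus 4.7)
The plan is to prove the matrix lower bound $\sum_{k=0}^{t-1} z(k)z(k)^\top \succeq \frac{(t-1)\sigma_z^2}{40} I$ by reducing to a uniform scalar inequality over the unit sphere in $\R^d$, which is then established by combining the strong anti-concentration of conditional Gaussians with a covering argument.

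First, I would fix a unit vector $v$ and analyze $v^\top M_t v = \sum_{k=0}^{t-1}(v^\top z(k))^2$, where $M_t := \sum_{k=0}^{t-1} z(k)z(k)^\top$. Since $v^\top z(k)$ is Gaussian given $\F_{k-1}$ with variance $v^\top \E[z(k)z(k)^\top \mid \F_{k-1}] v \geq \sigma_z^2$, the standard Gaussian tail satisfies
\[
\Prob\bigl[(v^\top z(k))^2 \geq \sigma_z^2/10 \,\big|\, \F_{k-1}\bigr] \;\geq\; \Prob\bigl[g^2 \geq 1/10\bigr] \;\geq\; 1/2,
\]
where $g \sim \CN(0,1)$. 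Setting $Y_k := \mathbf{1}\{(v^\top z(k))^2 \geq \sigma_z^2/10\}$, the differences $Y_k - \E[Y_k \mid \F_{k-1}]$ form a bounded martingale difference sequence in $[-1,1]$, so a conditional Azuma--Hoeffding inequality yields $\sum_{k=0}^{t-1} Y_k \geq t/4$ with failure probability at most $\exp(-t/32)$. Combining this with $(v^\top z(k))^2 \geq (\sigma_z^2/10)\,Y_k$ gives $v^\top M_t v \geq t\sigma_z^2/40$ for each fixed unit $v$, with the same exponentially small failure probability.

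Next, I would upgrade this fixed-direction bound to a uniform one via a standard $\varepsilon$-net $\mathcal{N}$ of the unit sphere with $|\mathcal{N}| \leq (1+2/\varepsilon)^d$ (e.g., $\varepsilon = 1/4$). A union bound delivers $\min_{v\in\mathcal{N}} v^\top M_t v \geq t\sigma_z^2/40$ with failure probability at most $(1+2/\varepsilon)^d e^{-t/32}$, and the bound is then transferred to arbitrary unit $v'$ using the quadratic-form perturbation $|v'^\top M_t v' - v^\top M_t v| \leq 2\varepsilon \|M_t\|$ whenever $\|v' - v\| \leq \varepsilon$. Choosing $t \geq 200\,d\log(12/\delta_z)$ is what should absorb both the cover cardinality and the perturbation slack into the desired failure probability $\delta_z$ and the constant $1/40$.

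The hardest piece will be the perturbation step, since the hypothesis only provides a \emph{lower} bound on $\E[z(k)z(k)^\top \mid \F_{k-1}]$, leaving $\|M_t\|$ a priori uncontrolled. I expect to address this by bootstrapping the same net: for $v_\star$ a top eigenvector of $M_t$, selecting $v \in \mathcal{N}$ with $\|v - v_\star\| \leq \varepsilon$ yields $\|M_t\| \leq (1-2\varepsilon)^{-1}\max_{v\in\mathcal{N}} v^\top M_t v$, so it suffices to produce a matching high-probability upper bound on $\max_{v\in\mathcal{N}} v^\top M_t v$ via a conditional sub-exponential tail for sums of squared Gaussians combined with a peeling step over the unknown scale $\|\E[z(k)z(k)^\top\mid\F_{k-1}]\|$. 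Calibrating this upper-tail control against the lower-tail Chernoff exponent and the cover cardinality $\sim d\log(1/\varepsilon)$ is what ultimately forces the $200\,d\log(12/\delta_z)$ sample-size threshold.
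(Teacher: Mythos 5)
The paper does not prove this lemma; it is imported verbatim from \cite[Lemma~36]{cassel2020logarithmic}, so there is no in-paper argument to compare against and your reconstruction has to stand on its own. Its architecture --- a conditional small-ball bound per fixed direction, Azuma/Chernoff on the indicator martingale, then an $\varepsilon$-net union bound --- is the standard route for statements of this type, and the fixed-direction half is essentially sound, with one caveat: the hypothesis lower-bounds the conditional \emph{second moment} $\E[z(k)z(k)^{\top}\mid\F_{k-1}]$, not the conditional covariance, so $v^{\top}z(k)$ is a Gaussian with a possibly nonzero conditional mean. Your reduction to $\Prob[g^2\ge 1/10]$ for a standard Gaussian $g$ therefore needs to be replaced by a small-ball bound for a noncentered Gaussian whose second moment is at least $\sigma_z^2$ (a short case split --- Anderson's inequality when the variance carries the second moment, a direct argument when the mean does --- still gives a constant near $1/2$). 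This case is not vacuous here: in the paper's application the conditional mean of $x(t)$ is $Ax(t-1)+Bu(t-1)\neq 0$.

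The genuine gap is exactly where you predicted, and your proposed repair does not close it. Under the stated hypotheses there is no upper bound of any kind on $\E[z(k)z(k)^{\top}\mid\F_{k-1}]$, so $\norm{M_t}$ admits no finite high-probability bound, and the ``matching high-probability upper bound on $\max_{v\in\mathcal{N}}v^{\top}M_tv$'' that your peeling step presupposes simply does not exist. The obstruction is not merely technical: a net lower bound alone never implies a minimum-eigenvalue lower bound. In $d=2$, take a unit vector $u$ at angular distance at least $\varepsilon/2$ from every point of an $\varepsilon$-net and set $M=Ku_{\perp}u_{\perp}^{\top}$ with $u_{\perp}$ orthogonal to $u$; then $v^{\top}Mv\ge K\sin^2(\varepsilon/2)$ for every net point $v$ while $\lambda_{\min}(M)=0$, and $K$ is free. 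Any correct proof must therefore inject two-sided control from somewhere: either state the bound on the intersection with an explicit boundedness event $\{M_t\preceq t\bar{\Gamma}I\}$ whose complement is handled by Markov (which needs integrability the statement does not supply), or exploit the extra structure of the application --- here the conditional \emph{covariance} of $z(t)$ is bounded below by $\underline{\sigma}^2I$ and $\norm{z(t)}$ is bounded on the event $\CE$ (cf.\ Lemma~\ref{lemma:upper bound on norm of z(t)}), which is what lets the uniformization go through in the cited source. As written, your argument does not establish the lemma.
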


\begin{lemma}
\label{lemma:CS inequallity}
Let $X_1,\dots,X_t$ be a sequence of random vectors, where $t\in\Z_{\ge1}$. Then,
\begin{equation*}
\E\Big[\big(\sum_{k=1}^t X_k\big)^{\top}\big(\sum_{k=1}^t X_k\big)\Big]\le\Big(\sum_{k=1}^t\sqrt{\E\Big[\norm{X_k}^2\Big]}\Big)^2.
\end{equation*}
\end{lemma}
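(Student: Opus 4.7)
The statement is Minkowski's inequality for the $L^2$-norm of random vectors in $\R^n$, so the plan is to prove it by two successive applications of the Cauchy--Schwarz inequality: one pointwise in $\R^n$, and one for expectations on the sample space. Concretely, I would introduce the seminorm $\|X\|_{L^2}\triangleq\sqrt{\E[\|X\|^2]}$ on the space of square-integrable random vectors, and observe that the claim is exactly $\bigl\|\sum_{k=1}^t X_k\bigr\|_{L^2}^2 \le \bigl(\sum_{k=1}^t \|X_k\|_{L^2}\bigr)^2$.

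First I would expand the left-hand side bilinearly, writing
\begin{equation*}
\E\Big[\bigl(\textstyle\sum_{k=1}^t X_k\bigr)^{\top}\bigl(\sum_{k=1}^t X_k\bigr)\Big]
=\sum_{j=1}^t\sum_{k=1}^t\E[X_j^{\top}X_k].
\end{equation*}
Next, for each pair $(j,k)$ I would bound the cross term by applying Cauchy--Schwarz pointwise in $\R^n$ to get $|X_j^{\top}X_k|\le\|X_j\|\|X_k\|$ almost surely, and then applying the Cauchy--Schwarz inequality for expectations to obtain
\begin{equation*}
\E[X_j^{\top}X_k]\le\E\bigl[\|X_j\|\|X_k\|\bigr]\le\sqrt{\E[\|X_j\|^2]\,\E[\|X_k\|^2]}=\|X_j\|_{L^2}\|X_k\|_{L^2}.
\end{equation*}
Summing the resulting bounds over $j,k\in\{1,\dots,t\}$ factorizes the double sum as $\bigl(\sum_{k=1}^t\sqrt{\E[\|X_k\|^2]}\bigr)^2$, which is exactly the right-hand side of the claim.

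There is no real obstacle in this argument; the only care needed is to distinguish the two distinct uses of Cauchy--Schwarz (pointwise on $\R^n$ versus on the probability space), and to handle any $X_k$ with $\E[\|X_k\|^2]=\infty$ trivially (the bound is vacuous). An equally valid alternative would be induction on $t$, reducing to the base case $t=2$, which is itself the standard triangle inequality in $L^2(\Omega;\R^n)$ and follows from one application of Cauchy--Schwarz after expanding $\|X_1+X_2\|_{L^2}^2 = \|X_1\|_{L^2}^2+2\E[X_1^{\top}X_2]+\|X_2\|_{L^2}^2$.
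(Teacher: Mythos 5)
Your proof is correct and follows essentially the same route as the paper's: expand the quadratic form bilinearly, bound each cross term $\E[X_j^{\top}X_k]$ by applying Cauchy--Schwarz first pointwise in $\R^n$ and then in expectation, and refactor the resulting double sum as a square. The only cosmetic difference is that the paper separates the diagonal terms from the off-diagonal ones before bounding, which changes nothing substantive.
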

\begin{proof}
We have the following:
\begin{align}\nonumber
\E\Big[\big(\sum_{k=1}^t X_k\big)^{\top}\big(\sum_{k=1}^t X_k\big)\Big]&=\sum_{k=1}^t\E\Big[\norm{X_k}^2\Big]+2\sum_{k_1=1}^t\sum_{\substack{k_2=1\\k_2\neq k_1}}^t\E\Big[X_{k_1}^{\top}X_{k_2}\Big]\\\nonumber
&\le\sum_{k=1}^t\E\Big[\norm{X_k}^2\Big]+2\sum_{k_1=1}^t\sum_{\substack{k_2=1\\k_2\neq k_1}}^t\E\Big[\norm{X_{k_1}}\norm{X_{k_2}}\Big]\\\nonumber
&\le\sum_{k=1}^t\E\Big[\norm{X_k}^2\Big]+2\sum_{k_1=1}^t\sum_{\substack{k_2=1\\k_2\neq k_1}}^t\sqrt{\E\Big[\norm{X_{k_1}}^2\Big]}\sqrt{\E\Big[\norm{X_k}^2\Big]}\\\nonumber
&=\Big(\sum_{k=1}^t\sqrt{\E\Big[\norm{X_k}^2\Big]}\Big)^2,
\end{align}
where the first and second inequalities follow from the Cauchy-Schwarz inequality.
\end{proof}

\begin{lemma}\label{lemma:matrix power perturbation bound}
\cite[Lemma~5]{mania2019certainty}
Consider any matrix $M\in\R^{n\times n}$ and any matrix $\Delta\in\R^{n\times n}$. Let $\kappa_M\in\R_{\ge1}$ and $\gamma_M\in\R_{>0}$ be such that $\gamma_M>\rho(M)$, and $\norm{M^k}\le\kappa_M\gamma_M^k$ for all $k\in\Z_{\ge0}$. Then, for all $k\in\Z_{\ge0}$,
\begin{equation*}
\norm{(M+\Delta)^k-M^k}\le k\kappa_M^2(\kappa_M\norm{\Delta}+\gamma_M)^{k-1}\norm{\Delta}.
\end{equation*}
\end{lemma}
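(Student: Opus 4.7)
The plan is to derive the bound via the standard telescoping identity relating the difference of two matrix powers to a sum over perturbed-unperturbed products, combined with an inductive bound on the norm of $(M+\Delta)^j$ itself.

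First I would record the algebraic identity
\begin{equation*}
(M+\Delta)^k - M^k \;=\; \sum_{j=0}^{k-1} (M+\Delta)^{j}\,\Delta\, M^{k-1-j},
\end{equation*}
which follows immediately by telescoping $(M+\Delta)^{j+1}M^{k-1-j} - (M+\Delta)^{j}M^{k-j}$ over $j\in\{0,\dots,k-1\}$. Taking norms and using the hypothesis $\|M^{k-1-j}\|\le\kappa_M\gamma_M^{k-1-j}$ reduces the problem to controlling $\|(M+\Delta)^j\|$ for $j=0,\dots,k-1$.

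The key intermediate step is the claim
\begin{equation*}
\|(M+\Delta)^j\| \;\le\; \kappa_M\bigl(\gamma_M+\kappa_M\|\Delta\|\bigr)^{j}\qquad\forall j\in\Z_{\ge0},
\end{equation*}
which I would prove by strong induction on $j$. The base case $j=0$ uses $\kappa_M\ge1$. For the inductive step, apply the same telescoping identity to $(M+\Delta)^j=M^j+\sum_{i=0}^{j-1}(M+\Delta)^i\Delta M^{j-1-i}$, take norms, apply the hypothesis on $\|M^i\|$ and the induction hypothesis for $i<j$, and collect terms; a short discrete-Gronwall-style computation (or direct algebraic manipulation using the geometric sum $\sum_{i=0}^{j-1}\gamma_M^{j-1-i}(\gamma_M+\kappa_M\|\Delta\|)^{i}$ telescoping into $(\gamma_M+\kappa_M\|\Delta\|)^{j}-\gamma_M^{j}$) yields the claim.

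Putting the two ingredients together, each of the $k$ summands in the telescoping identity is bounded by $\kappa_M^2\|\Delta\|(\gamma_M+\kappa_M\|\Delta\|)^{j}\gamma_M^{k-1-j}\le \kappa_M^2\|\Delta\|(\gamma_M+\kappa_M\|\Delta\|)^{k-1}$, since $\gamma_M\le\gamma_M+\kappa_M\|\Delta\|$. Summing over $j=0,\dots,k-1$ gives the claimed factor of $k$ and completes the proof. The main obstacle is the inductive bound on $\|(M+\Delta)^j\|$: one must be careful that the inequality remains tight enough to yield exactly $(\gamma_M+\kappa_M\|\Delta\|)^{j}$ rather than a looser $(\|M\|+\|\Delta\|)^j$ type bound, which would not suffice downstream in the paper (e.g.\ in \eqref{eqn:L_hat^k minus L_tilde^k}) where $\gamma_M$ plays the role of a contraction rate strictly less than $\|M\|$.
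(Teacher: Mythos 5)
Your proof is correct. Note that the paper does not prove this lemma at all---it is imported verbatim as \cite[Lemma~5]{mania2019certainty}---so there is no in-paper argument to compare against; what you have written is a complete, self-contained derivation of the cited result. Your route (the telescoping identity $(M+\Delta)^k-M^k=\sum_{j=0}^{k-1}(M+\Delta)^{j}\Delta M^{k-1-j}$ plus the inductive bound $\|(M+\Delta)^j\|\le\kappa_M(\gamma_M+\kappa_M\|\Delta\|)^j$) differs slightly from the proof in the cited reference, which expands $(M+\Delta)^k$ combinatorially, groups the $\binom{k}{\ell}$ words containing $\ell$ copies of $\Delta$ to get $\|(M+\Delta)^k-M^k\|\le\kappa_M\bigl((\kappa_M\|\Delta\|+\gamma_M)^k-\gamma_M^k\bigr)$, and then applies the elementary inequality $(a+b)^k-a^k\le kb(a+b)^{k-1}$. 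The two arguments land on exactly the same constant; yours trades the counting argument for a discrete-Gronwall induction, and the one subtle point---that the geometric sum $\kappa_M\|\Delta\|\sum_{i=0}^{j-1}(\gamma_M+\kappa_M\|\Delta\|)^{i}\gamma_M^{j-1-i}$ collapses exactly to $(\gamma_M+\kappa_M\|\Delta\|)^{j}-\gamma_M^{j}$, so the induction closes with the sharp base $\gamma_M+\kappa_M\|\Delta\|$ rather than $\|M\|+\|\Delta\|$---is exactly the point you flag and handle correctly. Either proof is acceptable; no gaps.
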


\end{document}